\numberwithin{equation}{section}
\newtheorem{lem}{Lemma}[section]
\newtheorem{thm}[lem]{Theorem}
\newtheorem{prop}[lem]{Proposition}
\newtheorem{cor}[lem]{Corollary}
\theoremstyle{definition}
\newtheorem{defn}[lem]{Definition}
\theoremstyle{remark}
\newtheorem{example}[lem]{Example}
\renewcommand{\Re}{\operatorname{Re}}
\newcommand{\Ric}{\operatorname{Ric}}
\newcommand{\tr}{\operatorname{tr}}
\newcommand{\Scal}{\operatorname{Scal}}
\newcommand{\Res}{\operatorname{Res}}
\newcommand{\dvol}{\operatorname{dvol}}
\newcommand{\pr}{\operatorname{pr}}
\newcommand{\id}{\operatorname{id}}
	\def\MR#1{}
\title[$Q$-prime curvature and scattering theory]{$Q$-prime curvature and scattering theory
	on strictly pseudoconvex domains}
\author{Yuya Takeuchi}
\address{Graduate School of Mathematical Sciences, 
The University of Tokyo,
3-8-1 Komaba, Meguro, Tokyo 153-8914 Japan}
\email{ytake@ms.u-tokyo.ac.jp}
\begin{document}

\begin{abstract}
The $Q$-prime curvature is a local invariant of
pseudo-Einstein contact forms on 
integrable strictly pseudoconvex CR manifolds.
The transformation law of the $Q$-prime curvature under scaling
is given in terms of a differential operator,
called the $P$-prime operator,
acting on the space of CR pluriharmonic functions.
In this paper,
we generalize these objects to
the boundaries of asymptotically complex hyperbolic Einstein (ACHE) manifolds,
which are partially integrable, strictly pseudoconvex CR manifolds,
by using the scattering matrix for ACHE manifolds.
In this setting,
the $P$-prime operator is a self-adjoint pseudodifferential operator
acting on smooth functions,
and the $Q$-prime curvature is globally determined by the ACHE manifold
and the choice of a contact form on the boundary.
We prove that the integral of the $Q$-prime curvature
is a conformal primitive of the $Q$-curvature;
in particular, it defines an invariant of ACHE manifolds
whose boundaries admit a contact form with zero $Q$-curvature.
We also apply the generalized $Q$-prime curvature
to compute the renormalized volume of strictly pseudoconvex domains
whose boundaries may not admit pseudo-Einstein structure.
\end{abstract}

\maketitle

\section{Introduction} \label{section:introduction}

Scattering theory on asymptotically hyperbolic manifolds
has been extensively developed by several authors;
see, e.g.,~\cites{Graham-Zworski03,Guillarmou-Qing10,Chang-Gonzalez11,Case-Chang16}
and references cited there.
It has important geometric consequences
especially when the asymptotically hyperbolic metric is asymptotically Einstein
(called the Poincar\'e metric);
the scattering matrix has connection to conformally invariant objects
of the boundary at the infinity
--- this can be regarded as an example of the AdS/CFT correspondence
in theoretical physics.
Graham-Zworski~\cite{Graham-Zworski03} proved that
the residues of the scattering matrix $\mathcal{S}(s)$
of the Poincar\'e metric give conformally invariant differential operators,
which have been known as GJMS operators~\cite{Graham-Jenne-Mason-Sparling92}.
When the boundary has even dimension,
they also described Branson's $Q$-curvature~\cite{Branson95}
as a special value of $\mathcal{S}(s)1$.
The scattering matrix has been also related to
the renormalized volume of asymptotically hyperbolic manifolds
by Fefferman-Graham~\cite{Fefferman-Graham02}
and Yang-Chang-Qing~\cite{Yang-Chang-Qing08}
in the case of odd and even dimensional boundary respectively.

Many of the results explained above have analogies
in complex setting,
where we consider asymptotically complex hyperbolic (ACH) Einstein manifolds,
whose boundary has a partially integrable, strictly pseudoconvex CR structure;
see, e.g.,~\cites{Guillarmou-SaBarreto08,Hislop-Perry-Tang08,Frank-Gonzalez-Monticelli-Tan15,Wang15}.
Such manifolds are defined as a generalization
of bounded strictly pseudoconvex domains in $\mathbb{C}^{m}$
with the complete K\"ahler-Einstein metric.
However,
there are CR invariant objects
that are specific to the complex case:
the $P$-prime operator and $Q$-prime curvature,
recently introduced by
Case-Yang~\cite{Case-Yang13} and Hirachi~\cite{Hirachi14},
are constructed by using pluriharmonic functions
and have no natural conformal analogs.
The aim of this paper is to analyze
these objects by using the scattering matrix for ACH Einstein manifolds.

\enlargethispage{1em}
To describe our results,
we first recall basic setup of scattering theory
in the complex case
by following Hislop-Perry-Tang~\cite{Hislop-Perry-Tang08}.
Let $\Omega$ be a bounded strictly pseudoconvex domain
with smooth boundary $M = \partial \Omega$
in a complex manifold $\mathcal{N}$ of complex dimension $m = n+1$.
(We here assume that $\mathcal{N}$ is a complex manifold
to simplify the exposition;
the description for general ACH manifolds will be given in
Sections~\ref{section:background}
and ~\ref{section:Scattering theory on ACH manifolds}.)
A real smooth function $x$ on $\mathcal{N}$ is called a \emph{defining function} of $\Omega$ if
\begin{equation*} 
	\Omega = \{ x > 0 \}, \qquad dx \neq 0 \qquad \text{on} \ M.
\end{equation*}
For a defining function $x$ of $\Omega$,
the $1$-form
\begin{equation} \label{eq:normalized_def}
	\theta = \frac{\sqrt{-1}}{2} (\partial - \overline{\partial}) x |_{M}
\end{equation}
defines a contact form on $M$.
Conversely, if $\theta$ is a contact form on $M$,
we may find a defining function $x$ of $\Omega$
such that~\eqref{eq:normalized_def} holds;
in this case,
we say that $x$ is \emph{normalized by} $\theta$.
We always assume this normalization in the introduction.

By Fefferman~\cite{Fefferman76},
there exists a Hermitian metric $g_{+}$ on $\Omega$
having the following properties:
\begin{itemize}
\item
The Hermitian form $\omega_{+}$ of $g_{+}$ is written in the form
\begin{equation} \label{eq:Kahler}
	\omega_{+} = -\sqrt{-1} \partial \overline{\partial} \log x + \Pi.
\end{equation}
Here $\Pi$ is a $(1,1)$-form on $\mathcal{N}$
and, near $M$, equal to the curvature
\begin{equation*}
	- \sqrt{-1} (m+1)^{-1} \partial \overline{\partial} \log h
\end{equation*}
of a Hermitian metric $h$ of the canonical bundle $K_{\mathcal{N}}$.
\item
The Ricci form $\rho_{g_{+}}$ of $g_{+}$ satisfies
\begin{equation} \label{eq:Einstein}
	\rho_{g_{+}} + (m+1) \omega_{+} = -\sqrt{-1} \partial \overline{\partial} (x^{m+1} \phi)
\end{equation}
for a smooth function $\phi$ on $\mathcal{N}$.
\end{itemize}
Such an $\omega_{+}$ is unique modulo terms of the form
$\sqrt{-1} \partial \overline{\partial}(x^{m+1} \psi)$ with $\psi$ smooth near $M$.

For domains in $\mathbb{C}^{m}$,
we can take $\Pi = 0$ and the Einstein equation~\eqref{eq:Einstein}
is reduced to the complex Monge-Amp\`ere equation for $x$,
which is the original setup of Fefferman.
Introducing the curvature term $\Pi$,
we can naturally generalize his argument
to domains in arbitrary complex manifolds.
Note also that
the decomposition on the right-hand side of~\eqref{eq:Kahler} has ambiguity.
If we choose another Hermitian metric $\widehat{h} = e^{-(m+1)u} h$,
then we obtain another decomposition of $\omega_{+}$
by taking $\widehat{x} = e^{u+\Upsilon} x$
for any pluriharmonic function $\Upsilon$ near $M$.
In particular,
if we can choose $h$ to be flat near $M$, that is, $\Pi =0$ near $M$,
then we can specify $x$ modulo scaling by $e^{\Upsilon}$
with pluriharmonic $\Upsilon$.
We call such an $x$ a \emph{Fefferman's defining function}
and call $\theta$ (normalizing $x$) a \emph{pseudo-Einstein contact form}.
If $\theta$ is pseudo-Einstein,
then $\widehat{\theta} = e^{\Upsilon} \theta$ is pseudo-Einstein
if and only if $\Upsilon$ is a CR pluriharmonic function.

Denote by $\Delta_{+}$ the (non-negative) Laplacian for $2g_{+}$.
Let $m \neq s \in \mathbb{C}$ be in a sufficiently small neighborhood of $m$.
For such $s$ and for $f \in C^{\infty}(M)$,
there exists a unique solution $u$ of the ``Dirichlet problem''
\begin{gather*}
\begin{cases}
	(\Delta_{+} - s(m-s)) u = 0, \\
	u = F x^{m-s} + G x^{s}, \quad
	\text{where} \ F, G \in C^{\infty}(\overline{\Omega}), \\
	F|_{M} = f.
\end{cases}
\end{gather*}
The \emph{Poisson operator}
\begin{equation*} 
	\mathcal{P}_{\theta}(s) \colon C^{\infty}(M) \to C^{\infty}(\Omega)
\end{equation*}
is then defined by $\mathcal{P}_{\theta}(s)f = u$, and the \emph{scattering matrix}
\begin{equation*}
	\mathcal{S}_{\theta}(s) \colon C^{\infty}(M) \to C^{\infty}(M)
\end{equation*}
by $\mathcal{S}_{\theta}(s)f = G|_{M}$.
The operator $\mathcal{S}_{\theta}(s)$ extends meromorphically to $\{ \Re s > m/2 \}$
and, in particular, has a single pole at $s=m$.
In~\cite{Hislop-Perry-Tang08},
Hislop-Perry-Tang proved that
\begin{equation*} 
	P_{\theta}f = -c_{m}^{-1} \Res_{s=m} \mathcal{S}_{\theta}(s)f
\end{equation*}
is a CR invariant differential operator,
known as the \emph{critical GJMS operator}~\cite{Gover-Graham05}.
Here $c_{m} = (-1)^{m}[2m!(m-1)!]^{-1}$.
They also showed that
$\mathcal{S}_{\theta}(s)1$ is holomorphic at $s=m$
and
\begin{equation*} 
	Q_{\theta} = c_{m}^{-1} \lim_{s \to m} \mathcal{S}_{\theta}(s) 1
\end{equation*}
is a local invariant of $\theta$,
which agrees with the \emph{$Q$-curvature};
the integral of this, the \emph{total $Q$-curvature},
defines a global CR invariant~\cite{Fefferman-Hirachi03}.
In fact, the above results can be generalized to ACH manifolds.
We will review the definition and basic properties
of ACH manifolds in Section~\ref{section:background},
and the scattering theory on ACH manifolds in
Section~\ref{section:Scattering theory on ACH manifolds}.

However,
it turns out that the $Q$-curvature vanishes
for a pseudo-Einstein contact form.
Recently,
Case-Yang~\cite{Case-Yang13} and Hirachi~\cite{Hirachi14}
defined the \emph{$P$-prime operator} and \emph{$Q$-prime curvature}.
The $P$-prime operator $P'_{\theta}$ is a differential operator
from the space of CR pluriharmonic functions to $C^{\infty}(M)$
and satisfies $P'_{\theta}1 = Q_{\theta}$ (Definition~\ref{def:p_prime}).
The $Q$-prime curvature $Q'_{\theta} \in C^{\infty}(M)$ is defined
for a pseudo-Einstein contact form $\theta$
and a local invariant of $\theta$  (Definition~\ref{def:q_prime}).
If $\widehat{\theta} = e^{\Upsilon} \theta$ is another pseudo-Einstein contact form,
then one has
\begin{equation*}
	\int_{M} Q'_{\widehat{\theta}}
	= \int_{M} Q'_{\theta}
	+ 2 \int_{M} P'_{\theta} \Upsilon.
\end{equation*}
Here and in the following, for a function $A_{\theta}$ determined by $\theta$,
we denote by $\int_{M} A_{\theta}$ the integral
$\int_{M} A_{\theta} \, \theta \wedge (d\theta)^{n}$.
Since $P'_{\theta}1 = Q_{\theta} =0$ holds,
the \emph{total $Q$-prime curvature}
\begin{equation*}
	\overline{Q'_{\theta}} = \int_{M} Q'_{\theta}
\end{equation*}
is independent of the choice of a pseudo-Einstein contact form
and defines a global CR invariant of $M$
if $P'_{\theta}$ is formally self-adjoint.
For $3$-dimensional CR manifolds,
Case-Yang~\cite{Case-Yang13} showed that $P'_{\theta}$ is formally self-adjoint
by using an explicit formula for $P'_{\theta}$.
On the other hand,
Hirachi~\cite{Hirachi14} proved the formal self-adjointness of $P'_{\theta}$
in the case that $g_{+}$ is K\"ahler
and an additional condition on $\Omega$ is satisfied.
(See the second statement of Theorem~\ref{thm:coincidence_p_prime}.)

We here relate the $P$-prime operator and $Q$-prime curvature
to the scattering theory with the following motivation:
(1) clarify the reason why
we need to impose some global assumptions
for the invariance of the total $Q$-prime curvature
though the $Q$-prime curvature is determined locally by the boundary;
(2) define the $P$-prime operator
as an operator on $C^{\infty}(M)$
and the $Q$-prime curvature for any contact form.

We define a pseudodifferential operator on $C^{\infty}(M)$,
\emph{scattering $P$-prime operator},
and a function on $M$, \emph{scattering $Q$-prime curvature},
by using the scattering theory.
For $f \in C^{\infty}(M)$ and a contact form $\theta$,
the scattering $P$-prime operator $\bm{P}'_{\theta}$ is defined by
\begin{equation*}
	c_{m}^{-1}\mathcal{S}_{\theta}(s)f
	= -\frac{1}{s-m} P_{\theta}f + \bm{P}'_{\theta} f + O(s-m).
\end{equation*}
Similarly, the scattering $Q$-prime curvature $\bm{Q}'_{\theta}$ is given by
\begin{equation*}
	c_{m}^{-1} \mathcal{S}_{\theta}(s)1
	= Q_{\theta} - \frac{1}{2} (s-m) \bm{Q}'_{\theta} + O((s-m)^{2}).
\end{equation*}
These definitions can be generalized to ACH manifolds
(Definitions~\ref{def:scat_p_prime} and~\ref{def:scat_q_prime}).
Analytic properties of $\bm{P}'_{\theta}$ and $\bm{Q}'_{\theta}$
follow easily from the definitions.

\begin{prop} \label{prop:scat_p_prime}
The operator $\bm{P}'_{\theta}$ is formally self-adjoint,
and if $\widehat{\theta} = e^{\Upsilon} \theta$ with $\Upsilon \in C^{\infty}(M)$ is another contact form,
then
\begin{equation} \label{eq:trans_law_scat_pprime}
	e^{m\Upsilon} \bm{P}'_{\widehat{\theta}} f = \bm{P}'_{\theta} f + \Upsilon P_{\theta} f + P_{\theta}(\Upsilon f).
\end{equation}
\end{prop}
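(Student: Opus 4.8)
The plan is to derive both assertions from two features of the scattering matrix: its formal self-adjointness at each fixed $s$, and its covariance under a change of contact form. Neither statement about $\bm{P}'_\theta$ should then require any new analysis beyond expanding a Laurent series and matching coefficients.

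For the self-adjointness I would use a boundary pairing argument. Given $f, g \in C^{\infty}(M)$, set $u = \mathcal{P}_{\theta}(s)f = Fx^{m-s} + Gx^{s}$ and $v = \mathcal{P}_{\theta}(s)g = F'x^{m-s} + G'x^{s}$, so that both solve $(\Delta_{+} - s(m-s))w = 0$ with the same spectral parameter. Then $u\,\Delta_{+}v - v\,\Delta_{+}u = 0$ identically, so Green's identity on $\{x > \epsilon\}$ forces the boundary integral of $u\,\partial_{N}v - v\,\partial_{N}u$ over $\{x = \epsilon\}$ to tend to $0$ as $\epsilon \to 0$. Substituting the asymptotics and using that the subleading coefficients of $F$ and $G$ are universal local operators applied to $F|_{M}$ and $G|_{M}$, the divergent contributions cancel in the antisymmetric combination and the finite part is a nonzero multiple of $(2s-m)\int_{M}\big(g\,\mathcal{S}_{\theta}(s)f - f\,\mathcal{S}_{\theta}(s)g\big)$. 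Since $2s - m \neq 0$ near $s = m$, this gives $\int_{M} g\,\mathcal{S}_{\theta}(s)f = \int_{M} f\,\mathcal{S}_{\theta}(s)g$ for all such $s$. Thus every coefficient in the Laurent expansion of $c_{m}^{-1}\mathcal{S}_{\theta}(s)$ at $s = m$ is formally self-adjoint; in particular so is its constant term $\bm{P}'_{\theta}$.

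For the transformation law I would first record the covariance of the scattering matrix itself. Since $g_{+}$ and hence $\Delta_{+}$ are unchanged, the only thing that changes with $\widehat{\theta} = e^{\Upsilon}\theta$ is the boundary defining function, namely $\widehat{x} = e^{\Upsilon}x$ to leading order at $M$. Rewriting $x^{m-s} = e^{-(m-s)\Upsilon}\widehat{x}^{\,m-s}$ and $x^{s} = e^{-s\Upsilon}\widehat{x}^{\,s}$ in the expansion of a solution $u$, and checking that for generic $s$ (since $2s - m \notin \mathbb{Z}_{\geq 0}$) the cross terms cannot contribute to the leading $\widehat{x}^{\,s}$-coefficient, one finds $\widehat{F}|_{M} = e^{-(m-s)\Upsilon}F|_{M}$ and $\widehat{G}|_{M} = e^{-s\Upsilon}G|_{M}$. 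This yields, for all $s$ by meromorphic continuation,
\begin{equation*}
	\mathcal{S}_{\widehat{\theta}}(s)f = e^{-s\Upsilon}\,\mathcal{S}_{\theta}(s)\big(e^{(m-s)\Upsilon}f\big).
\end{equation*}
Applying $c_{m}^{-1}$, writing $e^{-s\Upsilon} = e^{-m\Upsilon}\big(1 - (s-m)\Upsilon + O((s-m)^{2})\big)$ and $e^{(m-s)\Upsilon}f = f - (s-m)\Upsilon f + O((s-m)^{2})$, and inserting the defining expansion of $c_{m}^{-1}\mathcal{S}_{\theta}(s)$ gives
\begin{equation*}
	c_{m}^{-1}\mathcal{S}_{\widehat{\theta}}(s)f = e^{-m\Upsilon}\Big(-\frac{P_{\theta}f}{s-m} + \bm{P}'_{\theta}f + \Upsilon P_{\theta}f + P_{\theta}(\Upsilon f)\Big) + O(s-m).
\end{equation*}
Comparing with $c_{m}^{-1}\mathcal{S}_{\widehat{\theta}}(s)f = -\frac{1}{s-m}P_{\widehat{\theta}}f + \bm{P}'_{\widehat{\theta}}f + O(s-m)$, the poles recover the covariance $P_{\widehat{\theta}}f = e^{-m\Upsilon}P_{\theta}f$ of the critical GJMS operator, and the constant terms give precisely~\eqref{eq:trans_law_scat_pprime}.

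The main obstacle is the covariance identity for $\mathcal{S}_{\theta}(s)$ together with the divergent-term cancellation in the boundary pairing: both require that the full formal expansions of solutions, not just their leading boundary data, behave as expected, which is where the local recursive structure of the coefficients and the genericity condition $2s - m \notin \mathbb{Z}_{\geq 0}$ enter. Once these are in hand, the Laurent bookkeeping leading to self-adjointness and to~\eqref{eq:trans_law_scat_pprime} is entirely routine.
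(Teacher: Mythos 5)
Your proposal is correct and follows essentially the same route as the paper: the paper likewise deduces formal self-adjointness of $\bm{P}'_{\theta}$ from that of $\mathcal{S}_{\theta}(s)$ for real $s$, and obtains~\eqref{eq:trans_law_scat_pprime} by matching the constant terms of the Laurent expansions of both sides of the covariance identity~\eqref{eq:trans_law_scat_mat} at $s=m$. The only difference is that you also sketch proofs of the self-adjointness and covariance of the scattering matrix, which the paper simply quotes as known facts from the scattering theory on ACH manifolds.
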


Note that, as an operator on $C^{\infty}(M)$,
the scattering $P$-prime operator is different from
the $P$-prime operator on $3$-dimensional
CR manifolds given in~\cite{Case-Yang13}.

\begin{prop} \label{prop:scat_q_prime}
Under the scaling $\widehat{\theta} = e^{\Upsilon} \theta$,
one has
\begin{equation} \label{eq:trans_law_scat_qprime}
	e^{m \Upsilon} (\bm{Q}_{\widehat{\theta}} - 2 \Upsilon Q_{\widehat{\theta}})
	= \bm{Q}'_{\theta} + 2 \bm{P}'_{\theta} \Upsilon + P_{\theta} \Upsilon^{2},
\end{equation}
and
\begin{equation} \label{eq:trans_law_total_scat_qprime}
	\int_{M} \bm{Q}'_{\widehat{\theta}}
	= \int_{M} \bm{Q}'_{\theta} + 2\int_{M} \Upsilon Q _{\theta} + 2\int_{M} \Upsilon Q_{\widehat{\theta}}.
\end{equation}
In particular,
$\int_{M} \bm{Q}'_{\theta}$ is independent of the choice of $\theta$
satisfying $Q_{\theta} = 0$.
\end{prop}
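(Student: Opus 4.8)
The plan is to derive both statements from the covariance of the scattering matrix under scaling. Write $\mathcal{T}_\theta(s) = c_m^{-1}\mathcal{S}_\theta(s)$. Since the scaling $\widehat\theta = e^\Upsilon\theta$ changes the normalized defining function by $\widehat x = e^\Upsilon x$ to leading order at $M$, the associated Dirichlet problems are intertwined by
\begin{equation*}
	\mathcal{T}_{\widehat\theta}(s) f = e^{-s\Upsilon}\,\mathcal{T}_\theta(s)\bigl(e^{(m-s)\Upsilon} f\bigr).
\end{equation*}
This is the identity already underlying Proposition~\ref{prop:scat_p_prime}, and I take it as the basic input; everything then follows by expanding it at $f = 1$ to one order beyond what $\bm{P}'_\theta$ requires, and by integrating.

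First I would expand the covariance identity in $t = s - m$. From $e^{(m-s)\Upsilon} = 1 - t\Upsilon + \tfrac{1}{2}t^2\Upsilon^2 + O(t^3)$, I apply $\mathcal{T}_\theta(s)$ termwise, using $\mathcal{T}_\theta(s)g = -t^{-1}P_\theta g + \bm{P}'_\theta g + O(t)$ on $g = \Upsilon,\Upsilon^2$ and $\mathcal{T}_\theta(s)1 = Q_\theta - \tfrac{1}{2}t\,\bm{Q}'_\theta + O(t^2)$ on the constant term, then multiply by $e^{-s\Upsilon} = e^{-m\Upsilon}(1 - t\Upsilon + O(t^2))$ and collect powers of $t$. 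The coefficient of $t^0$ reproduces the $Q$-curvature law $e^{m\Upsilon}Q_{\widehat\theta} = Q_\theta + P_\theta\Upsilon$ (using $P_\theta 1 = 0$), while the coefficient of $t^1$, after substituting this back and multiplying through by $-2e^{m\Upsilon}$, is precisely~\eqref{eq:trans_law_scat_qprime}.

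Next I would integrate~\eqref{eq:trans_law_scat_qprime} against $\theta\wedge(d\theta)^n$. Because $m = n+1$, the volume form scales as $\widehat\theta\wedge(d\widehat\theta)^n = e^{m\Upsilon}\,\theta\wedge(d\theta)^n$, so the factor $e^{m\Upsilon}$ on the left turns its two terms into integrals with respect to $\widehat\theta$, namely $\int_M\bm{Q}'_{\widehat\theta} - 2\int_M\Upsilon Q_{\widehat\theta}$. For the right-hand side I use two facts. Comparing the two expansions at $f = 1$ gives $\bm{P}'_\theta 1 = Q_\theta$, so the formal self-adjointness of $\bm{P}'_\theta$ from Proposition~\ref{prop:scat_p_prime} yields $\int_M\bm{P}'_\theta\Upsilon = \int_M\Upsilon\,\bm{P}'_\theta 1 = \int_M\Upsilon Q_\theta$; and the critical GJMS operator $P_\theta$ is formally self-adjoint with $P_\theta 1 = 0$, so $\int_M P_\theta\Upsilon^2 = \int_M\Upsilon^2 P_\theta 1 = 0$. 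Combining these gives~\eqref{eq:trans_law_total_scat_qprime}, and restricting to two contact forms with $Q_\theta = Q_{\widehat\theta} = 0$ (any two differ by such a scaling) shows $\int_M\bm{Q}'_\theta$ is unchanged.

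The only real difficulty is bookkeeping: one must carry the expansions of $\mathcal{T}_\theta(s)\Upsilon$ and $\mathcal{T}_\theta(s)\Upsilon^2$ to the correct order --- their $O(t)$ and $O(1)$ remainders never reach the $t^1$ coefficient --- and track the $\Upsilon$-linear cross terms from $e^{-s\Upsilon}$. The one genuinely nontrivial ingredient, the covariance identity, is not local to this argument; it is supplied by the scattering construction of Section~\ref{section:Scattering theory on ACH manifolds}, and granted that, Proposition~\ref{prop:scat_q_prime} is a formal consequence.
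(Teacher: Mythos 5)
Your proposal is correct and follows essentially the same route as the paper: both extract the coefficients of $(s-m)^{0}$ and $(s-m)^{1}$ from the covariance identity~\eqref{eq:trans_law_scat_mat} applied to $f=1$, and then integrate~\eqref{eq:trans_law_scat_qprime} using $\widehat{\theta}\wedge(d\widehat{\theta})^{n}=e^{m\Upsilon}\,\theta\wedge(d\theta)^{n}$ together with the formal self-adjointness of $P_{\theta}$ and $\bm{P}'_{\theta}$ and the identities $P_{\theta}1=0$, $\bm{P}'_{\theta}1=Q_{\theta}$. The only cosmetic differences are that you move $e^{s\Upsilon}$ to the other side before expanding and that you obtain $\bm{P}'_{\theta}1=Q_{\theta}$ by comparing the two Laurent expansions at $f=1$ rather than via the paper's separate corollary.
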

From~\eqref{eq:trans_law_total_scat_qprime},
we can also see that the integral of the scattering $Q$-prime curvature
is a conformal primitive of the $Q$-curvature.

\begin{cor}
If $\theta(\varepsilon) = e^{\varepsilon \Upsilon} \theta$, then
\begin{equation*}
	\left. \frac{d}{d\varepsilon} \right|_{\varepsilon = 0} \int_{M} \bm{Q}'_{\theta(\varepsilon)}
	= 4 \int_{M} \Upsilon Q_{\theta}.
\end{equation*}
\end{cor}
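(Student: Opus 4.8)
The plan is to obtain the corollary as a direct differentiation of the total transformation law~\eqref{eq:trans_law_total_scat_qprime}. First I would apply that identity to the one-parameter family $\widehat{\theta} = \theta(\varepsilon) = e^{\varepsilon \Upsilon}\theta$; this amounts to replacing the conformal factor $\Upsilon$ in Proposition~\ref{prop:scat_q_prime} by $\varepsilon \Upsilon$, which gives
\begin{equation*}
	\int_{M} \bm{Q}'_{\theta(\varepsilon)}
	= \int_{M} \bm{Q}'_{\theta}
	+ 2\varepsilon \int_{M} \Upsilon Q_{\theta}
	+ 2\varepsilon \int_{M} \Upsilon Q_{\theta(\varepsilon)}.
\end{equation*}
Here the first term on the right is independent of $\varepsilon$, the second is linear in $\varepsilon$, and the third carries an explicit factor $\varepsilon$ multiplying a quantity that depends on $\varepsilon$ through both $Q_{\theta(\varepsilon)}$ and the volume form $\theta(\varepsilon) \wedge (d\theta(\varepsilon))^{n}$ hidden in the notation $\int_{M}(\,\cdot\,)$.

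Next I would differentiate in $\varepsilon$ and evaluate at $\varepsilon = 0$. The constant term contributes nothing, and the second term contributes $2\int_{M}\Upsilon Q_{\theta}$. For the third term I apply the product rule: differentiating the explicit factor $\varepsilon$ yields $2\int_{M}\Upsilon Q_{\theta(0)} = 2\int_{M}\Upsilon Q_{\theta}$, while the term in which the integral itself is differentiated is multiplied by $\varepsilon$ and therefore vanishes at $\varepsilon = 0$. Summing the two surviving contributions gives $4\int_{M}\Upsilon Q_{\theta}$, as claimed.

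The only point requiring a word of justification is that $\varepsilon \mapsto \int_{M}\Upsilon Q_{\theta(\varepsilon)}$ is differentiable at $\varepsilon = 0$, so that the product rule is legitimate; this follows from the smooth dependence of both the $Q$-curvature $Q_{\theta(\varepsilon)}$ and the integration density $\theta(\varepsilon)\wedge(d\theta(\varepsilon))^{n}$ on the parameter $\varepsilon$. No genuine obstacle arises: once~\eqref{eq:trans_law_total_scat_qprime} is in hand the corollary is essentially a one-line computation, and the doubling of $2\int_{M}\Upsilon Q_{\theta}$ to $4\int_{M}\Upsilon Q_{\theta}$ is precisely the mechanism by which the two $Q$-curvature terms in~\eqref{eq:trans_law_total_scat_qprime}, one evaluated at $\theta$ and one at $\widehat{\theta}$, coincide to first order at the base point.
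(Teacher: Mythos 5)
Your proposal is correct and follows essentially the same route as the paper, which derives the corollary directly from the transformation law~\eqref{eq:trans_law_total_scat_qprime} by substituting the conformal factor $\varepsilon\Upsilon$ and differentiating at $\varepsilon=0$. The bookkeeping of the two contributions of $2\int_{M}\Upsilon Q_{\theta}$ (one from the $Q_{\theta}$ term, one from evaluating the $Q_{\theta(\varepsilon)}$ term at $\varepsilon=0$ via the product rule) matches the intended argument exactly.
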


Now we examine the relation between
the $P$-prime operator and $Q$-prime curvature,
and the scattering ones.

We first consider the $P$-prime operator.
Assume that $f$ is a CR pluriharmonic function on $M$.
By the strict pseudoconvexity,
$f$ can be extended
to a smooth function $\widetilde{f}$ on $\overline{\Omega}$
that is pluriharmonic near $M$.
We can write the difference between $P'_{\theta}f$ and $\bm{P}'_{\theta} f$ by using $\widetilde{f}$.

\begin{thm} \label{thm:coincidence_p_prime}
Let $\Omega$ be a bounded strictly pseudoconvex domain
with smooth boundary $M= \partial \Omega$
in an $m$-dimensional complex manifold.
Assume that $\Omega$ is equipped with a Hermitian metric $g_{+}$
satisfying~\eqref{eq:Kahler} and~\eqref{eq:Einstein}.
Let $f$ be a CR pluriharmonic function on $M$ and
$\widetilde{f}$ a smooth extension of $f$ that is pluriharmonic near $M$.
Then 
\begin{equation*}
	K \widetilde{f} = c_{m}^{-1} x^{-m} \mathcal{R}(m) (\Delta_{+} \tilde{f})
\end{equation*}
is smooth up to the boundary and satisfies
\begin{equation*}
	\bm{P}'_{\theta}f = P'_{\theta} f - (K \widetilde{f}) |_{M}.
\end{equation*}
Here $\mathcal{R}(m)$ is the inverse of $\Delta_{+}$
as an operator on the space of square-integrable functions on $(\Omega, g_{+})$.
In particular,
the $P$-prime operator agrees with the scattering $P$-prime operator
on the space of CR pluriharmonic functions
and is formally self-adjoint
if $g_{+}$ is K\"ahler
and any CR pluriharmonic function on $M$
has pluriharmonic extension to $\Omega$.
\end{thm}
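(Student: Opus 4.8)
The plan is to realize the Poisson solution $\mathcal{P}_{\theta}(s)f$ for $s$ near $m$ as a correction of the pluriharmonic extension $\widetilde{f}$ by the resolvent $\mathcal{R}(s) = (\Delta_{+} - s(m-s))^{-1}$, and then to read off the coefficient of $x^{s}$ as $s \to m$. I would begin with the analytic preliminaries. Near $M$ the curvature term $\Pi$ in \eqref{eq:Kahler} is $\partial\overline{\partial}$-exact, hence $d$-closed, so $\omega_{+}$ is closed and $g_{+}$ is K\"ahler in a neighborhood of the boundary; since $\widetilde{f}$ is pluriharmonic there, $\Delta_{+}\widetilde{f} = 0$ near $M$. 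Thus $\Delta_{+}\widetilde{f}$ is smooth and supported in a compact subset of the interior of $\Omega$, so it lies in $L^{2}(\Omega, g_{+})$ and $\mathcal{R}(m)(\Delta_{+}\widetilde{f})$ is defined: at $s = m$ the spectral parameter $s(m-s) = 0$ lies below the continuous spectrum of $\Delta_{+}$, so $\mathcal{R}(m) = \Delta_{+}^{-1}$ is a genuine bounded operator, holomorphic in $s$ near $m$.

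To prove that $K\widetilde{f}$ is smooth up to $M$, I would use that $\Delta_{+}\big(\mathcal{R}(m)(\Delta_{+}\widetilde{f})\big)$ vanishes near $M$ and expand the solution there as $x^{m}\sum_{k \ge 0} a_{k} x^{k}$. The indicial polynomial of $\Delta_{+}$ is $I(\lambda) = \lambda(m-\lambda)$, so the coefficient governing $a_{k}$ in the recursion is $I(m+k) = -k(m+k)$, which is nonzero for every $k \ge 1$. Hence the recursion is solvable to all orders with no obstruction and no logarithmic terms, giving $x^{-m}\mathcal{R}(m)(\Delta_{+}\widetilde{f}) \in C^{\infty}(\overline{\Omega})$ and thus smoothness of $K\widetilde{f}$. (The resonance $I(m) = 0$ occurs only at $k = 0$, where $a_{0}$ is the free leading coefficient.)

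For the identity, set $L_{s} = \Delta_{+} - s(m-s)$ and write, for $s$ near $m$, $u(s) = x^{m-s}\widetilde{f} - \mathcal{R}(s)\big(L_{s}(x^{m-s}\widetilde{f})\big)$; by construction $L_{s}u(s) = 0$ and the leading coefficient of $x^{m-s}$ is $\widetilde{f}|_{M} = f$, so by uniqueness $u(s) = \mathcal{P}_{\theta}(s)f$ and $\mathcal{S}_{\theta}(s)f$ is the coefficient of $x^{s}$ in $-\mathcal{R}(s)\big(L_{s}(x^{m-s}\widetilde{f})\big)$. The crucial step is to split $L_{s}(x^{m-s}\widetilde{f})$ into a part determined locally at $M$ by the boundary jet of $\widetilde{f}$ and the indicial structure of $\Delta_{+}$, and the interior part $x^{m-s}\Delta_{+}\widetilde{f}$. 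Letting $s \to m$, the resolvent of the local part reproduces exactly the intrinsic construction of the $P$-prime operator on CR pluriharmonic functions (Definition~\ref{def:p_prime}), contributing $c_{m}P'_{\theta}f$ to the coefficient of $x^{m}$, while the interior part yields $\mathcal{R}(m)(\Delta_{+}\widetilde{f}) = c_{m}x^{m}K\widetilde{f}$, i.e.\ $-c_{m}(K\widetilde{f})|_{M}$. Dividing by $c_{m}$ gives $\bm{P}'_{\theta}f = P'_{\theta}f - (K\widetilde{f})|_{M}$. Since the critical GJMS operator annihilates CR pluriharmonic functions, $P_{\theta}f = 0$, so $\mathcal{S}_{\theta}(s)f$ is regular at $s = m$ and $\bm{P}'_{\theta}f = c_{m}^{-1}\mathcal{S}_{\theta}(m)f$ is a genuine value. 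The main obstacle is precisely this matching: at $s = m$ the indicial roots $m-s$ and $s$ differ by the integer $m$, so the splitting of $u(m) = \widetilde{f} - c_{m}x^{m}K\widetilde{f}$ into its ``$x^{m-s}$-series'' and ``$x^{s}$-series'' is ambiguous at order $x^{m}$, the order-$m$ Taylor coefficient of $\widetilde{f}$ competing with the scattering coefficient. The content of the argument is to verify that, with the normalization built into $\mathcal{S}_{\theta}(s)$, the correctly separated local part is exactly the quantity defining $P'_{\theta}f$, so the only genuinely global correction is the clean term $\mathcal{R}(m)(\Delta_{+}\widetilde{f})$.

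The ``in particular'' assertions then follow with no further work. If $g_{+}$ is K\"ahler and $f$ admits a pluriharmonic extension $\widetilde{f}$ to all of $\Omega$, then $\Delta_{+}\widetilde{f} = 0$ on $\Omega$, hence $\mathcal{R}(m)(\Delta_{+}\widetilde{f}) = 0$ and $K\widetilde{f} = 0$, so $\bm{P}'_{\theta}f = P'_{\theta}f$ for every CR pluriharmonic $f$. Formal self-adjointness of $P'_{\theta}$ is inherited from that of $\bm{P}'_{\theta}$ (Proposition~\ref{prop:scat_p_prime}): for CR pluriharmonic $f, g$ one has $\int_{M}(P'_{\theta}f)g = \int_{M}(\bm{P}'_{\theta}f)g = \int_{M}f(\bm{P}'_{\theta}g) = \int_{M}f(P'_{\theta}g)$.
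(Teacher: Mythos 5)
Your preliminaries are correct and coincide with the paper's: $g_{+}$ is K\"ahler near $M$, so $\Delta_{+}\widetilde{f}$ is compactly supported in $\Omega$; the mapping property of the resolvent (Theorem~\ref{thm:resolvent}) then gives smoothness of $K\widetilde{f}$ up to the boundary and the identity $\mathcal{P}_{\theta}(m)f=\widetilde{f}-c_{m}(K\widetilde{f})x^{m}$; and the ``in particular'' assertions do follow immediately. However, the central identity is not actually proved. You reduce everything to the assertion that ``the resolvent of the local part reproduces exactly the intrinsic construction of the $P$-prime operator, contributing $c_{m}P'_{\theta}f$,'' and you yourself call this matching ``the main obstacle'' and ``the content of the argument'' --- but you never carry it out. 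That matching is precisely what the theorem says, so as written the proof is circular at its key step, and neither the constant nor the sign of the correction term is verified. The missing tool is the characterization of $\bm{P}'_{\theta}$ by a logarithmic expansion (Proposition~\ref{prop:formal_scat_p_prime}): $\bm{G}'|_{M}=-2c_{m}\bm{P}'_{\theta}f$ where $\Delta_{+}\bm{u}'=m\,\mathcal{P}_{\theta}(m)f+O(x^{\infty})$ for $\bm{u}'=(\mathcal{P}_{\theta}(m)f)\log x+\bm{F}'+\bm{G}'x^{m}\log x+\bm{H}'x^{m}(\log x)^{2}$, to be compared with the defining equation of $P'_{\theta}$ (Definition~\ref{def:p_prime}), which is the same equation with $\widetilde{f}$ in place of $\mathcal{P}_{\theta}(m)f$ and no $(\log x)^{2}$ term. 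Subtracting the two, the right-hand side becomes $m(\widetilde{f}-\mathcal{P}_{\theta}(m)f)=mc_{m}(K\widetilde{f})x^{m}$ and the difference of the $\log x$ terms is absorbed into the $x^{m}\log x$ coefficient; Lemma~\ref{lem:laplacian} then forces $\bm{H}'|_{M}=0$ and $-m\bigl(G'|_{M}-\bm{G}'|_{M}+c_{m}(K\widetilde{f})|_{M}\bigr)=mc_{m}(K\widetilde{f})|_{M}$, which is exactly $\bm{P}'_{\theta}f=P'_{\theta}f-(K\widetilde{f})|_{M}$. Some version of this coefficient comparison is indispensable.

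A secondary gap: for $s\neq m$ you apply $\mathcal{R}(s)$ to $L_{s}(x^{m-s}\widetilde{f})$, but by Lemma~\ref{lem:laplacian} this function is only $O(x^{m-s+1/2})$, not $O(x^{\infty})$, so the resolvent mapping property stated in Theorem~\ref{thm:resolvent} does not apply to it; one must first construct a formal solution to infinite order (as in Proposition~\ref{prop:formal_solution}) before correcting with the resolvent. This difficulty is avoidable --- and the paper avoids it --- by working only at $s=m$, where $L_{m}\widetilde{f}=\Delta_{+}\widetilde{f}$ is compactly supported, and using the $s=m$ characterization of $\bm{P}'_{\theta}$ above rather than a limit of scattering coefficients as $s\to m$.
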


Theorem~\ref{thm:coincidence_p_prime} suggests that
the formal self-adjointness of the $P$-prime operator stems from that of the scattering one.

We next consider the $Q$-prime curvature.

\begin{thm} \label{thm:coincidence_q_prime}
Let $(\Omega, g_{+})$ be as in Theorem~\ref{thm:coincidence_p_prime}.
Assume that $\theta$ is a pseudo-Einstein contact form on $M$
and $x$ is a Fefferman's defining function normalized by $\theta$.
Then
\begin{equation*}
	D = - c_{m}^{-1} x^{-m} \mathcal{R}(m) (\tr_{g_{+}} \Pi)
\end{equation*}
is a well-defined smooth function on $\overline{\Omega}$
and satisfies
\begin{equation*}
	\bm{Q}'_{\theta} = Q'_{\theta} - 2D|_{M}.
\end{equation*}
Here the function $\tr_{g_{+}} \gamma$ for a $(1,1)$-form $\gamma$
is defined by
\begin{equation*}
	(\tr_{g_{+}}\gamma) \cdot \omega_{+}^{m} = m \gamma \wedge \omega_{+}^{n}.
\end{equation*}
In particular,
the $Q$-prime curvature coincides with the scattering one
if one can choose $x$ so that $\Pi = 0$.
\end{thm}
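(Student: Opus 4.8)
The plan is to realize both $Q'_\theta$ and $\bm{Q}'_\theta$ from the $s$-derivative at $s=m$ of the scattering solution with boundary value $1$, and to isolate the discrepancy produced by the curvature term $\Pi$. Write $u(s) = \mathcal{P}_\theta(s)1 = F(s)x^{m-s} + G(s)x^{s}$ with $F(s)|_M = 1$, so that $\mathcal{S}_\theta(s)1 = G(s)|_M$ and, by the definition of $\bm{Q}'_\theta$, one has $\bm{Q}'_\theta = -2c_m^{-1}\,\frac{d}{ds}\big|_{s=m}\bigl(G(s)|_M\bigr)$. Since $\theta$ is pseudo-Einstein, $Q_\theta = 0$, hence $\mathcal{S}_\theta(m)1 = 0$; as the constant $1$ solves $\Delta_+ u = 0$ with $F|_M = 1$ and $G = 0$, uniqueness in the scattering problem forces $u(m) = 1$. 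Differentiating $(\Delta_+ - s(m-s))u(s) = 0$ in $s$, evaluating at $s=m$, and inserting $u(m) = 1$ then shows that $\dot u := \frac{d}{ds}\big|_{s=m}u(s)$ solves $\Delta_+ \dot u = -m$.

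First I would compute $\Delta_+(\log x)$ from the K\"ahler equation~\eqref{eq:Kahler}. Rearranging it gives $\sqrt{-1}\,\partial\overline{\partial}\log x = \Pi - \omega_+$; taking the $g_+$-trace and using $\tr_{g_+}\omega_+ = m$ yields $\Delta_+(\log x) = a(\tr_{g_+}\Pi - m)$, where $a$ is the constant fixed by our normalization of $\Delta_+$. This is the step that injects $\tr_{g_+}\Pi$ into the problem; in the model situation $\Pi = 0$ one simply has $\Delta_+(\log x) = -am$. I would then decompose $\dot u = c_0 \log x + \tilde v$, choosing $c_0$ so that $c_0\log x$ accounts for the constant $-m$ in $\Delta_+\dot u$ that comes from the $-m$ in $\Delta_+(\log x)$. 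The equation for $\dot u$ then forces $\Delta_+ \tilde v$ to equal $-\tr_{g_+}\Pi$ up to the same factor, so that $\tilde v$ is, up to a universal constant, $\mathcal{R}(m)(\tr_{g_+}\Pi)$.

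The remaining task is to match boundary data. Because $g_+$ is complete and asymptotically complex hyperbolic, the inverse metric decays at $M$, so $\tr_{g_+}\Pi$ is smooth up to the boundary and lies in $L^2(\Omega, g_+)$; thus $\mathcal{R}(m) = \Delta_+^{-1}$ applies to it. Since $0$ lies below the essential spectrum and $\Delta_+$ has no $L^2$-kernel, $\mathcal{R}(m)$ sends a source smooth up to $M$ to a function with leading boundary behavior $x^m$. Hence $x^{-m}\tilde v$, and therefore $D = -c_m^{-1}x^{-m}\mathcal{R}(m)(\tr_{g_+}\Pi)$, extends smoothly to $\overline{\Omega}$, which is the asserted well-definedness. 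Moreover, as $\tilde v \sim x^m(\text{smooth})$, it contributes only to the coefficient of $x^s$ in $\dot u$, i.e.\ to the scattering part $G(s)$, and its boundary value enters $\frac{d}{ds}\big|_{s=m}\bigl(G(s)|_M\bigr)$ precisely as $D|_M$ once the factor $-2c_m^{-1}$ is taken into account. The complementary part $c_0\log x + (\text{smooth})$ of $\dot u$ carries only the local boundary expansion of the Fefferman defining function and reproduces the data defining $Q'_\theta$ in Definition~\ref{def:q_prime}. Combining these gives $\bm{Q}'_\theta = Q'_\theta - 2D|_M$, and the final assertion is immediate since $\tr_{g_+}\Pi = 0$, hence $D = 0$, when $\Pi = 0$.

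The main obstacle is this last matching. One must carefully separate, in the boundary expansion of $\dot u$, the contributions of $\log x$, of the smooth $F$-part, and of a possible $x^m\log x$ term from the genuine $x^m$ scattering coefficient, and verify that $\tilde v$ feeds into the latter with exactly the normalization $-2D|_M$. This hinges on the boundary regularity of the resolvent at the critical value $s = m$ --- the degenerate case in which the two indicial roots $0$ and $m$ differ by the integer $m$ --- which is the real analytic input behind both the smoothness of $D$ and the identification of the $\log x$-part of $\dot u$ with the intrinsic $Q'_\theta$.
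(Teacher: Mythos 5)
Your overall strategy --- differentiate $\mathcal{P}_{\theta}(s)1$ at $s=m$, use the K\"ahler equation to get $\Delta_{+}\log x = m - \tr_{g_{+}}\Pi$, and identify the correction to $-\log x$ with a resolvent term --- is exactly the paper's route. But there are two genuine gaps. First, your justification that $D$ is well defined and smooth up to $\overline{\Omega}$ is wrong. You argue that $\tr_{g_{+}}\Pi$ lies in $L^{2}$ because the inverse metric decays, and that $\mathcal{R}(m)$ sends any source smooth up to $M$ to a function with leading behavior $x^{m}$. Neither claim holds: a function that is only $O(x)$ is not in $L^{2}(\dvol_{g_{+}})$ for $m\geq 2$ (the volume form blows up like $x^{-m-1}$), and by the indicial computation in Lemma~\ref{lem:laplacian} (e.g.\ $\Delta(f\log x)=mf+O(x^{1/2}\log x)$) the solution of $\Delta_{+}w=f$ with $f|_{M}\neq 0$ necessarily contains $\log x$ and low-order terms, not a leading $x^{m}$. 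The hypothesis doing the work --- which you never invoke --- is that $x$ is a \emph{Fefferman's} defining function, so $\Pi$, hence $\tr_{g_{+}}\Pi$, vanishes identically near $M$ and therefore lies in $x^{\infty}C^{\infty}(X)$, the precise domain on which Theorem~\ref{thm:resolvent} guarantees $\mathcal{R}(m)$ maps into $x^{m}C^{\infty}(X)$. Without this, half the theorem (the smoothness of $D$) is unproved.

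Second, the matching $\bm{Q}'_{\theta}=Q'_{\theta}-2D|_{M}$, which you yourself flag as ``the main obstacle,'' is not carried out, and your sketch of it does not engage with the actual definition of $Q'_{\theta}$. Definition~\ref{def:q_prime} characterizes $Q'_{\theta}$ through the \emph{quadratic} source $\|d\log x\|^{2}_{g_{+}}-2$, while Proposition~\ref{prop:formal_scat_q_prime2} characterizes $\bm{Q}'_{\theta}$ through $\|d(\Dot{\mathcal{P}}_{\theta}(m)1)\|^{2}_{g_{+}}-2$; your claim that the ``$c_{0}\log x+(\text{smooth})$ part reproduces the data defining $Q'_{\theta}$'' never touches these norms. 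The paper's argument is to insert $\Dot{\mathcal{P}}_{\theta}(m)1=-\log x+c_{m}Dx^{m}$, expand
\begin{equation*}
	\|d(\Dot{\mathcal{P}}_{\theta}(m)1)\|^{2}_{g_{+}}-2
	=\|d\log x\|^{2}_{g_{+}}-2-4mc_{m}Dx^{m}+O(x^{m+1}),
\end{equation*}
and then push the discrepancy $-4mc_{m}Dx^{m}$ through the indicial operator (an $x^{m}\log x$ term with coefficient $4c_{m}D$ absorbs it), which converts $B'|_{M}=-2c_{m}Q'_{\theta}$ into $\bm{B}'|_{M}=-2c_{m}\bm{Q}'_{\theta}$ with the shift $-2D|_{M}$. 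Your alternative idea of reading $\dot{G}(m)|_{M}$ directly off the $x^{m}$-coefficient of $\dot u$ also needs care, since at the degenerate value $s=m$ the splitting $Fx^{m-s}+Gx^{s}$ mixes, and in any case it still would not produce $Q'_{\theta}$ without passing through Definition~\ref{def:q_prime}.
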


We next give an expression for the renormalized volume
in terms of the scattering $Q$-prime curvature and a correction term.
The volume of $\Omega_{\varepsilon} = \{z \in \Omega \mid x(z) > \varepsilon \}$
admits an expansion, as $\varepsilon \to +0$,
\begin{equation*}
	\int_{\Omega_{\varepsilon}} \dvol_{g_{+}}
	= \sum_{j=1}^{m} \frac{b_{j}}{\varepsilon^{m+1-j}} + L \log \varepsilon + V + o(1),
\end{equation*}
where $b_{j}, L, V \in \mathbb{R}$.
It is known that
$L$ is a constant multiple of the total $Q$-curvature~\cite{Seshadri07}.
We call $V$ the \emph{renormalized volume of $(\Omega, g_{+})$ with respect to $x$}.
Hirachi-Marugame-Matsumoto~\cite{Hirachi-Marugame-Matsumoto17}
proved the formula
\begin{equation} \label{eq:renormalized_volume2}
	V = \frac{1}{n!} c_{m} \int_{M} Q'_{\theta}
	+ \frac{1}{m!} \int_{\Omega} \Pi^{m}
\end{equation}
for a pseudo-Einstein contact form $\theta$ and a Fefferman's defining function $x$.
Note that $\Pi$ is a compactly supported $(1,1)$-form on $\Omega$,
and the integral $\int_{\Omega} \Pi^{m}$ is well-defined.
We deduce a formula of the renormalized volume
for any $(\Omega, g_{+})$ in a complex manifold.
For any contact form $\theta$,
there exists a defining function $x$ normalized by $\theta$
such that
\begin{equation*}
	\tr_{g_{+}} \Pi = m - \Delta_{+} \log x = O(x^{m}),
\end{equation*}
and such an $x$ is unique modulo $O(x^{m+1})$;
see Proposition~\ref{prop:flat_def_function}.

\begin{thm} \label{thm:renormalized_volume}
Let $\Omega$ be a bounded strictly pseudoconvex domain
with smooth boundary $M= \partial \Omega$
in a complex manifold of dimension $m = n+1$.
Assume that $\Omega$ is equipped with a Hermitian metric $g_{+}$
satisfying~\eqref{eq:Kahler} and~\eqref{eq:Einstein}.
Moreover, assume that $x$ is chosen
so that $\tr_{g_{+}} \Pi = O(x^{m})$.
Then the function $- (d/ds)|_{s=m} \mathcal{P}_{\theta}(s)1$ is of the form
\begin{equation} \label{eq:correction_term}
	- \left. \frac{d}{ds} \right|_{s=m} \mathcal{P}_{\theta}(s)1 = \log x + Ax^{m} + Bx^{m} \log x,
\end{equation}
where $A, B \in C^{\infty}(\overline{\Omega})$,
and the renormalized volume $V$ with respect to $x$ is written in the form
\begin{equation} \label{eq:renormalized_volume}
	V = \frac{1}{n!} c_{m} \int_{M} \bm{Q}'_{\theta}
	- \frac{1}{n!} \int_{M} A|_{M} \, \theta \wedge (d\theta)^{n}.
\end{equation}
\end{thm}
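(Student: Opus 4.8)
The plan is to handle the two assertions separately: first the normal form \eqref{eq:correction_term} of $w := -(d/ds)|_{s=m}\mathcal{P}_{\theta}(s)1$, and then the volume formula \eqref{eq:renormalized_volume}, the latter by a regularized Green's identity in which the scattering asymptotics of $\mathcal{P}_{\theta}(s)1$ feed the two terms on the right-hand side. For \eqref{eq:correction_term} I would first observe that $\mathcal{P}_{\theta}(m)1 = 1$: the constant function solves $\Delta_{+}u = 0$ with the required leading behaviour, so uniqueness of the Dirichlet problem identifies it with $\mathcal{P}_{\theta}(m)1$. Differentiating the eigenequation $(\Delta_{+} - s(m-s))\mathcal{P}_{\theta}(s)1 = 0$ at $s=m$, where $(d/ds)[s(m-s)] = m-2s = -m$, gives $\Delta_{+}w = m\,\mathcal{P}_{\theta}(m)1 = m$. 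On the other hand the hypothesis $\tr_{g_{+}}\Pi = O(x^{m})$ is precisely $\Delta_{+}\log x = m - \tr_{g_{+}}\Pi = m + O(x^{m})$, so $v := w - \log x$ satisfies $\Delta_{+}v = \tr_{g_{+}}\Pi = O(x^{m})$ with $v|_{M} = 0$. Since $\Delta_{+}x^{\lambda} = \lambda(m-\lambda)x^{\lambda} + O(x^{\lambda+1})$, the indicial roots are $0$ and $m$; solving order by order, the absence of forcing below order $x^{m}$ together with invertibility of the indicial factor $\lambda(m-\lambda)$ for $1 \le \lambda \le m-1$ forces the coefficients of $x^{1},\dots,x^{m-1}$ to vanish, while the order-$x^{m}$ forcing meets the resonance at $\lambda = m$ and generates a $Bx^{m}\log x$ term alongside the free coefficient $Ax^{m}$. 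Absorbing higher orders into smooth $A,B \in C^{\infty}(\overline{\Omega})$ yields \eqref{eq:correction_term}.

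For \eqref{eq:renormalized_volume} I would run the divergence theorem on $\Omega_{\varepsilon} = \{x > \varepsilon\}$ against $w$; this uses only the Riemannian metric underlying $2g_{+}$, so no K\"ahler hypothesis is needed. Because $\Delta_{+}w = m$ is constant, $m\int_{\Omega_{\varepsilon}}\dvol_{2g_{+}} = \int_{\Omega_{\varepsilon}}(\Delta_{+}w)\,\dvol_{2g_{+}}$ reduces to a boundary integral of the $2g_{+}$-normal derivative of $w$ over $\{x=\varepsilon\}$, and converting $\dvol_{2g_{+}} = 2^{m}\dvol_{g_{+}}$ expresses $\int_{\Omega_{\varepsilon}}\dvol_{g_{+}}$ as $(m2^{m})^{-1}$ times that boundary integral. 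I would then insert the expansion \eqref{eq:correction_term}: expanding $\partial_{\nu}w$ and the induced area form in powers of $x=\varepsilon$, the leading $\log x$ produces the divergent coefficients $b_{j}$ and, through its $x^{m}\log x$ companion, the $\log\varepsilon$ term whose coefficient is the total $Q$-curvature of~\cite{Seshadri07}; the term $Ax^{m}$ contributes the finite boundary integral $\int_{M} A|_{M}\,\theta\wedge(d\theta)^{n}$, which after the normalization of the CR boundary pairing accounts for the $-\tfrac{1}{n!}\int_{M}A|_{M}$ in \eqref{eq:renormalized_volume}.

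It remains to identify the rest of the finite part with $\tfrac{1}{n!}c_{m}\int_{M}\bm{Q}'_{\theta}$, and this is the step I expect to be the main obstacle. The key structural fact is that both $w = -(d/ds)|_{s=m}\mathcal{P}_{\theta}(s)1$ and $\bm{Q}'_{\theta}$ are first $s$-derivatives of the scattering construction. In the splitting $\mathcal{P}_{\theta}(s)1 = F_{s}x^{m-s} + G_{s}x^{s}$ one has $G_{s}|_{M} = \mathcal{S}_{\theta}(s)1$, and differentiating $w = -\dot{F}_{m} + F_{m}\log x - \dot{G}_{m}x^{m} - G_{m}x^{m}\log x$ together with the relation $F_{m} = 1 - G_{m}x^{m}$ coming from $\mathcal{P}_{\theta}(m)1 = 1$ shows $B|_{M} = -2G_{m}|_{M} = -2c_{m}Q_{\theta}$, while $\dot{G}_{m}|_{M} = -\tfrac{1}{2}c_{m}\bm{Q}'_{\theta}$ by the defining expansion in Definition~\ref{def:scat_q_prime}. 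I would therefore compute the boundary integral directly from the asymptotic coefficients $F_{s},G_{s}$ and their $s$-derivatives at $s=m$, so that the regularization of the leading $\log x$ against the scattering datum $\dot{G}_{m}|_{M}$ assembles exactly into $\tfrac{1}{n!}c_{m}\int_{M}\bm{Q}'_{\theta}$; the delicate points are the bookkeeping of the $2^{\pm m}$ and $c_{m}$ factors and verifying that the subleading geometric terms in the area-form expansion contribute nothing further to the finite part. Granting this identification, \eqref{eq:renormalized_volume} follows, and as a consistency check one can confirm that in the pseudo-Einstein, Fefferman case it reduces, via Theorem~\ref{thm:coincidence_q_prime}, to the Hirachi--Marugame--Matsumoto formula \eqref{eq:renormalized_volume2}.
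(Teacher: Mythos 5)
Your first two steps track the paper's argument: the normal form \eqref{eq:correction_term} follows exactly as you say from $\Delta_{+}\Dot{\mathcal{P}}_{\theta}(m)1=-m$ together with $\Delta_{+}\log x=m+O(x^{m})$, and the paper likewise begins the volume computation by writing $\int_{\Omega_{\varepsilon}}\dvol_{g_{+}}=\frac{1}{m}\int_{\Omega_{\varepsilon}}\Delta_{+}(-\Dot{\mathcal{P}}_{\theta}(m)1)\,\dvol_{g_{+}}$ and applying Green's identity on $\{x>\varepsilon\}$. But the step you flag as the main obstacle is a genuine gap, and the resolution you sketch would not close it. After the first Green's identity the finite part is
\begin{equation*}
	m!\,V=\int_{M}\psi^{(m)}\,\theta\wedge(d\theta)^{n}+\int_{M}\bigl[mA|_{M}-2c_{m}Q_{\theta}\bigr]\theta\wedge(d\theta)^{n},
\end{equation*}
where $\psi^{(m)}$ is the $m$-th coefficient in the expansion of the volume density of $g_{+}$ along the boundary. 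Contrary to your hope that ``the subleading geometric terms in the area-form expansion contribute nothing further to the finite part,'' this term does contribute, and it is not determined by the scattering asymptotics of $\mathcal{P}_{\theta}(s)1$: no amount of bookkeeping with $\mathcal{F}_{s}$, $\mathcal{G}_{s}$ and their $s$-derivatives produces it. Relatedly, $A|_{M}$ is not simply $-\Dot{\mathcal{G}}_{m}|_{M}=\tfrac{1}{2}c_{m}\bm{Q}'_{\theta}$; it also contains the order-$x^{m}$ coefficient of $\Dot{\mathcal{F}}_{m}$, a locally determined quantity invisible to the scattering matrix. So the identification of the remaining finite part with $\tfrac{1}{n!}c_{m}\int_{M}\bm{Q}'_{\theta}$ cannot be extracted from the single Green's identity you propose.

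The missing idea in the paper is a \emph{second} regularized integration by parts, applied to the auxiliary quantity $\int_{\Omega_{\varepsilon}}\bigl[\|d(\Dot{\mathcal{P}}_{\theta}(m)1)\|^{2}_{g_{+}}-2\bigr]\dvol_{g_{+}}$. Its $\log\varepsilon$-coefficient is computed twice: once directly from the expansions \eqref{eq:correction_term} and of the volume form, which yields $2\int_{M}\psi^{(m)}+4\int_{M}[mA|_{M}-2c_{m}Q_{\theta}]$ (up to the subtraction of the constant $2$), and once via Proposition~\ref{prop:formal_scat_q_prime2}, which characterizes $\bm{Q}'_{\theta}$ and $Q_{\theta}$ as the coefficients of $x^{m}\log x$ and $x^{m}(\log x)^{2}$ in the solution $\bm{v}'$ of $\Delta_{+}\bm{v}'=\|d(\Dot{\mathcal{P}}_{\theta}(m)1)\|^{2}_{g_{+}}-2+O(x^{\infty})$, giving $mc_{m}\int_{M}\bm{Q}'_{\theta}-4c_{m}\int_{M}Q_{\theta}$ for the relevant coefficient. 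Equating the two computations eliminates the unknown $\int_{M}\psi^{(m)}$ and converts the finite part into $mc_{m}\int_{M}\bm{Q}'_{\theta}-m\int_{M}A|_{M}$, which is \eqref{eq:renormalized_volume}. Without this second identity (or some substitute that pins down $\int_{M}\psi^{(m)}$ in terms of scattering data), your argument stalls exactly where you predicted it would.
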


Note that both terms on the right-hand side in~\eqref{eq:renormalized_volume}
depend on the choice of $g_{+}$
and the second term also depends on the choice of $x$.
We can decompose these terms
into the total $Q$-prime curvature and a contribution from the interior
if $\theta$ is pseudo-Einstein.

\begin{thm} \label{thm:renormalized_volume2}
Let $(\Omega, g_{+})$ be as in Theorem~\ref{thm:renormalized_volume}.
Assume that $g_{+}$ is K\"ahler
and $x$ is a Fefferman's defining function.
Then
\begin{equation} \label{eq:integral_scat_qprime}
	\int_{M} \bm{Q}'_{\theta} = \int_{M} Q'_{\theta}
	+ \frac{2}{mc_{m}} \int_{\Omega} \Pi^{m}.
\end{equation}
Moreover,
the function $B$ in~\eqref{eq:correction_term} is equal to zero
and
\begin{equation} \label{eq:integral_correction}
	\int_{M} A|_{M} \, \theta \wedge (d\theta)^{n}
	= \frac{1}{m} \int_{\Omega} \Pi^{m}.
\end{equation}
In particular,
one has the formula~\eqref{eq:renormalized_volume2}.
\end{thm}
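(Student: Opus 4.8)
The plan is to deduce the three displayed identities from a single boundary integral, that of the function $D$ of Theorem~\ref{thm:coincidence_q_prime}, and then to substitute the result into Theorem~\ref{thm:renormalized_volume}. Since $x$ is a Fefferman's defining function, $\theta$ is pseudo-Einstein and $\Pi$ is supported away from $M$, so Theorem~\ref{thm:coincidence_q_prime} applies and gives $\bm{Q}'_{\theta} = Q'_{\theta} - 2D|_{M}$. Integrating over $M$, the identity \eqref{eq:integral_scat_qprime} becomes equivalent to
\[ \int_{M} D|_{M}\,\theta\wedge(d\theta)^{n} = -\frac{1}{mc_{m}}\int_{\Omega}\Pi^{m}. \]

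To establish this I would set $v = \mathcal{R}(m)(\tr_{g_{+}}\Pi)$, so that $\Delta_{+}v = \tr_{g_{+}}\Pi$ and $v = -c_{m}x^{m}D$ with $D \in C^{\infty}(\overline{\Omega})$. As $\tr_{g_{+}}\Pi$ has compact support, $\int_{\Omega_{\varepsilon}}\Delta_{+}v\,\dvol_{g_{+}}$ is eventually constant in $\varepsilon$ and equals $\int_{\Omega}\tr_{g_{+}}\Pi\,\dvol_{g_{+}}$. Green's theorem for $2g_{+}$, together with the near-boundary normal form of $g_{+}$ determined by $\omega_{+} = -\sqrt{-1}\partial\overline{\partial}\log x$ — in which $|dx|^{2}_{g_{+}} = 2x^{2}(1+O(x))$ and $\dvol_{g_{+}} = \tfrac{1}{(m-1)!}x^{-m-1}\,dx\wedge\theta\wedge(d\theta)^{n} + O(x^{-m})$ — then evaluates the flux across $\{x=\varepsilon\}$ as $\varepsilon\to 0$ to give
\[ \int_{\Omega}\tr_{g_{+}}\Pi\,\dvol_{g_{+}} = -\frac{mc_{m}}{(m-1)!}\int_{M}D|_{M}\,\theta\wedge(d\theta)^{n}. \]
On the other hand, from $(\tr_{g_{+}}\Pi)\,\omega_{+}^{m} = m\,\Pi\wedge\omega_{+}^{n}$ one gets $\int_{\Omega}\tr_{g_{+}}\Pi\,\dvol_{g_{+}} = \tfrac{1}{(m-1)!}\int_{\Omega}\Pi\wedge\omega_{+}^{n}$, and a $\partial\overline{\partial}$-Stokes lemma reduces this to $\tfrac{1}{(m-1)!}\int_{\Omega}\Pi^{m}$: writing $\eta := -\sqrt{-1}\partial\overline{\partial}\log x$ and $\psi := -\log x$ and expanding $\omega_{+}^{n} = (\Pi+\eta)^{n}$, each term with $a,b\ge 1$ satisfies $\int_{\Omega}\Pi^{a}\wedge\eta^{b} = \sqrt{-1}\int_{\Omega}d(\Pi^{a}\wedge\eta^{b-1}\wedge\overline{\partial}\psi) = 0$, since $\Pi^{a}\wedge\eta^{b-1}$ is closed and vanishes near $M$, so only the $b=0$ term $\Pi^{m}$ survives. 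Comparing the two evaluations yields the displayed identity, hence \eqref{eq:integral_scat_qprime}.

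For the coefficients $A$ and $B$ I would analyze $w = -(d/ds)|_{s=m}\mathcal{P}_{\theta}(s)1$. The constant function solves the Dirichlet problem at $s=m$, so $\mathcal{P}_{\theta}(m)1 = 1$; differentiating $(\Delta_{+} - s(m-s))\mathcal{P}_{\theta}(s)1 = 0$ at $s=m$ gives $\Delta_{+}w = m$. Since $\tr_{g_{+}}\Pi = m - \Delta_{+}\log x$, the difference $h := w - \log x - v = (A + c_{m}D)x^{m} + Bx^{m}\log x$ satisfies $\Delta_{+}h = 0$. Both $x^{m}$ and $x^{m}\log x$ are square integrable for $g_{+}$, so $h \in L^{2}$; as $\mathcal{R}(m)$ inverts $\Delta_{+}$ on $L^{2}$, its kernel is trivial and $h = 0$. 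This forces $B = 0$ and $A = -c_{m}D$, whence $\int_{M}A|_{M}\,\theta\wedge(d\theta)^{n} = -c_{m}\int_{M}D|_{M}\,\theta\wedge(d\theta)^{n} = \tfrac{1}{m}\int_{\Omega}\Pi^{m}$, which is \eqref{eq:integral_correction}.

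Finally, inserting \eqref{eq:integral_scat_qprime} and \eqref{eq:integral_correction} into \eqref{eq:renormalized_volume} and using $m\cdot n! = m!$ collapses the two $\int_{\Omega}\Pi^{m}$ contributions to $\tfrac{1}{m!}\int_{\Omega}\Pi^{m}$, which is exactly \eqref{eq:renormalized_volume2}. I expect the main difficulty to lie in the asymptotic analysis of the second paragraph: justifying that the flux across $\{x=\varepsilon\}$ has no divergent part, so that only $D|_{M}$ survives in the limit, and tracking the precise constant through the interplay of $\Delta_{+} = \tfrac{1}{2}\Delta^{g_{+}}$, the factor $2$ in $|dx|^{2}_{g_{+}}$, and the $(m-1)!$ in the volume form. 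The vanishing of the $\log$ term likewise requires care: it amounts to confirming that the resonance at $s=m$ produces no obstruction once $Q_{\theta}=0$, which here is encoded cleanly in the triviality of the $L^{2}$-kernel of $\Delta_{+}$.
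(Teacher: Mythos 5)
Your proposal is correct and follows essentially the same route as the paper: both reduce \eqref{eq:integral_scat_qprime} and \eqref{eq:integral_correction} to the single identity $\int_{M} D|_{M}\,\theta\wedge(d\theta)^{n} = -\tfrac{1}{mc_{m}}\int_{\Omega}\Pi^{m}$, proved by evaluating $\int_{\Omega}(\tr_{g_{+}}\Pi)\,\omega_{+}^{m}$ once via Green's theorem applied to $\Delta_{+}(Dx^{m})$ and once via the Stokes reduction of $\Pi\wedge\omega_{+}^{n}$ to $\Pi^{m}$, and then substitute into \eqref{eq:renormalized_volume}. The only divergence is that you obtain $B=0$ and $A=-c_{m}D$ by an $L^{2}$-uniqueness argument for $\Delta_{+}h=0$, whereas the paper simply quotes the identity $-\Dot{\mathcal{P}}_{\theta}(m)1=\log x - c_{m}Dx^{m}$ from the proof of Theorem~\ref{thm:coincidence_q_prime}; your justification is a valid (and self-contained) way to reach the same formula.
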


For $4$-dimensional ACHE manifolds,
Herzlich~\cite{Herzlich07} proved a formula for the renormalized volume
in terms of the Euler characteristic,
the integral of a Riemannian invariant of ACHE manifolds,
and the integral of a pseudo-Hermitian invariant of the boundary.
On the other hand,
Seshadri~\cite{Seshadri07} deduced another formula for the renormalized volume
of $2$-dimensional K\"ahler-Einstein manifolds
with strictly pseudoconvex boundary.
However,
their choices of defining functions are different from ours.
Note also that
in the case of asymptotically hyperbolic manifolds,
Fefferman-Graham~\cite{Fefferman-Graham02}
and Yang-Chang-Qing~\cite{Yang-Chang-Qing08}
considered the function corresponding to $\bm{Q}'_{\theta}$
for the study of the renormalized volume.

This paper is organized as follows.
In Section~\ref{section:background},
we review basic notions of partially integrable CR manifolds
and ACH metrics.
In Section~\ref{section:Scattering theory on ACH manifolds},
we recall the scattering theory on ACH manifolds,
define the scattering $P$-prime operator and $Q$-prime curvature,
and prove propositions.
In Section~\ref{section:Proof of Theorems},
we prove theorems stated in this section.

\section{Asymptotically complex hyperbolic manifolds} \label{section:background}

\subsection{Partially integrable CR manifolds} \label{section:partially_integrable}

Let $M$ be a smooth manifold of dimension $2n+1$, $n \geq 1$.
An \emph{almost CR structure} is a complex $n$-dimensional subbundle $T^{1,0}M$
of the complexified tangent bundle $T^{\mathbb{C}}M$
such that $T^{1,0}M \cap T^{0,1}M = 0$, where $T^{0,1}M = \overline{T^{1,0}M}$.
We say that an almost CR structure $T^{1,0}M$ is
\emph{partially integrable} if the following condition is satisfied:
\begin{equation*}
	[ \Gamma(T^{1,0}M) , \Gamma(T^{1,0}M) ] \subset \Gamma(T^{1,0}M \oplus T^{0,1}M). 
\end{equation*}
In particular
if $[ \Gamma(T^{1,0}M) , \Gamma(T^{1,0}M) ]$ is contained in $\Gamma(T^{1,0}M)$,
the almost CR structure is said to be \emph{integrable}.
For example, if $M$ is a real hypersurface in a complex manifold $\mathcal{N}$,
then $M$ has a canonical integrable CR structure
\begin{equation*}
	T^{1,0}M = T^{1,0} \mathcal{N} |_{M} \cap T^{\mathbb{C}}M.
\end{equation*}
A smooth function $f$ is said to be a \emph{CR function}
if $df$ annihilates $T^{0,1}M$.
A \emph{CR pluriharmonic function} is a real smooth function
that is locally the real part of a CR function.
Assume that
there exists a nowhere vanishing real $1$-form $\theta$
that annihilates $\mathcal{H} = \Re T^{1,0}M$.
The \emph{Levi form} $L_{\theta}$ of $\theta$ is defined by 
\begin{equation*}
	L_{\theta}(Z,\overline{W}) = - \sqrt{-1} d\theta(Z,\overline{W}),
	\qquad Z, W \in \Gamma(T^{1,0}M).
\end{equation*}
In the following, we assume that $T^{1,0}M$ is \emph{strictly pseudoconvex},
i.e., the Levi form is positive definite for some choice of $\theta$.
Such a $\theta$ is called a \emph{pseudo-Hermitian structure}, or a \emph{contact form}.

\subsection{$\Theta$-manifolds} \label{section:th_manifolds}
The intrinsic definitions of $\Theta$-manifolds and ACH metrics
are given in~\cites{Epstein-Melrose-Mendoza91,Guillarmou-SaBarreto08,Matsumoto14}.
Here we give these definitions following~\cite{Matsumoto16}.
Let $X$ be a $(2n+2)$-dimensional smooth manifold with boundary $M$.
Denote by $\iota$ the inclusion $M \hookrightarrow X$.
Let $\Theta \in \Gamma(T^{*}X|_{M})$ be a smooth $1$-form such that
$\theta = \iota^{*}{\Theta}$ is nowhere vanishing.
A \emph{$\Theta$-structure} on $X$
is a conformal class $[\Theta]$,
and a pair $(X, [\Theta])$ is called a \emph{$\Theta$-manifold}.
A \emph{$\Theta$-diffeomorphism} between $\Theta$-manifolds
is a diffeomorphism that preserves the $\Theta$-structures.

For a $\Theta$-manifold $(X, [\Theta])$,
there exists a canonical vector bundle $^{\Theta}TX$,
the \emph{$\Theta$-tangent bundle} on $X$,
which is defined by modifying the tangent bundle near the boundary.
This vector bundle is canonically isomorphic to
the usual tangent bundle over the interior $\mathring{X}$ of $X$.
The structure of $^{\Theta}TX$ near the boundary is as follows.
Let $p \in M$
and $N, T, Y_{1}, \dots , Y_{2n}$ a local frame of $TX$
in a neighborhood $U$ of $p$ such that
\begin{itemize}
	\item $N|_{\partial X}$ is annihilated by $\Theta$;
	\item $T,Y_{1}, \dots , Y_{2n}$ are tangent to $M \cap U$;
	\item $( Y_{1}|_{\partial X}, \dots , Y_{2n}|_{\partial X} )$
		is a local frame of $\ker \theta$ on $M \cap U$.
\end{itemize}
Then the vector bundle $^{\Theta}TX|_{U}$ is spanned by
$(\rho N, \rho^{2} T, \rho Y_{1}, \dots, \rho Y_{2n})$.
Here $\rho \in C^{\infty}(X)$ is a defining function of $X$, that is,
$\rho$ satisfies
\begin{equation*}
	\mathring{X} = \{ \rho > 0 \},
	\qquad d\rho \neq 0 \quad \text{on} \ M.
\end{equation*}
A bundle metric of the $\Theta$-tangent bundle is called a \emph{$\Theta$-metric}.
This induces a complete Riemannian metric on $\mathring{X}$.

Assume that the boundary $M$ has a partially integrable CR structure $T^{1,0}M$.
We say that it is \emph{compatible with the $\Theta$-structure}
if $\theta$ is a contact form on $M$.

\subsection{ACH metrics} \label{section:ach_metrics}
Let $(M, T^{1,0}M)$ be a partially integrable CR manifold of dimension $2n+1$,
and $\theta$ a contact form on $M$.
Denote  by $\pr$ the projection $[ 0, \infty ) \times M \to M$.
Then the $1$-form $\Theta = \pr^{*} \theta$ defines a $\Theta$-structure on $[0,\infty) \times M$
and the conformal class $[\Theta]$ is independent of the choice of $\theta$.
Moreover, the partially integrable CR structure $T^{1,0}M$
is compatible with this $\Theta$-structure.
We call this $\Theta$-structure the \emph{standard $\Theta$-structure}
on $[0, \infty) \times M$. 
Let $(Z_{1}, \dots , Z_{n})$ be a local frame of $T^{1,0}M$ and set
\begin{equation*}
	\bm{Z}_{\infty} = r \frac{\partial}{\partial r}, \quad \bm{Z}_{0} = r^{2} T,
	\quad \bm{Z}_{\alpha} = r Z_{\alpha},
	\quad \bm{Z}_{\overline{\alpha}} = r Z_{\overline{\alpha}},
\end{equation*}
where $r$ is the coordinate of $[0, \infty )$ and $T$ is the Reeb vector field
associated to $\theta$.
Then the complexified $\Theta$-tangent bundle
is spanned by $(\bm{Z}_{I}) = (\bm{Z}_{\infty}, \bm{Z}_{0}, \bm{Z}_{\alpha}, \bm{Z}_{\overline{\alpha}})$. 

Let $U \subset [0,\infty) \times M$ be an open neighborhood of $\{0\} \times M$.
A $\Theta$-metric $g$ on $U$ is said to be an \emph{ACH metric}
if, for some choice of $\theta$, the boundary values of $g_{IJ} = g(\bm{Z}_{I},\bm{Z}_{J})$
satisfy
\begin{align*}
	g_{\infty \infty} &= 4,& g_{\infty 0} &= 0, & g_{\infty \alpha} &= 0, \\
	g_{00} &= 1,& g_{0\alpha} &= 0,& g_{\alpha \overline{\beta}} &= h_{\alpha \overline{\beta}},
	\qquad g_{\alpha \beta} = 0 \quad \text{on} \quad \{0\} \times M.
\end{align*}
Here $h_{\alpha \overline{\beta}} = L_{\theta}(Z_{\alpha}, Z_{\overline{\beta}})$.

When $(X, [\Theta])$ is an arbitrary $\Theta$-manifold,
an ACH metric on $X$ is defined as follows.

\begin{defn}
Let $X$ be a $\Theta$-manifold.
A $\Theta$-metric $g$ on $X$ is called an \emph{ACH metric}
if there exist the following objects:
\begin{itemize}
	\item a compatible partially integrable CR structure $T^{1,0}M$ on $M$;
	\item an open neighborhood $U \subset [0,\infty) \times M$ of $M$,
		an open neighborhood $V \subset X$ of $M$,
		and a $\Theta$-diffeomorphism $\Psi\colon U \to V$
		such that $\Psi|_{M} = \id_{M} $ and
		$\Psi^{*}g$ is an ACH metric on $U$.
\end{itemize}
The partially integrable CR structure $T^{1,0}M$ is determined uniquely by $g$
and called the \emph{CR structure at infinity}.
A compact $\Theta$-manifold with an ACH metric is called
an \emph{ACH manifold}.
\end{defn}

Let $\rho$ be a defining function of a $\Theta$-manifold $X$ with an ACH metric $g$.
The symmetric $2$-tensor $\rho^{4} g$ on $\mathring{X}$
is smooth up to the boundary
and $\iota^{*}(\rho^{4}g)(Y,\cdot) = 0$ holds for $Y \in \mathcal{H}$.
Hence there exists a unique contact form $\theta$ on $M$
such that $\iota^{*}(\rho^{4} g) = \theta \otimes \theta$;
in this case, we say that a defining function $\rho$ is \emph{normalized by $\theta$}. 

For a given $\Theta$-manifold $X$
and a compatible partially integrable CR structure $T^{1,0}M$,
one can construct an ACH metric on $X$ with CR structure at infinity $T^{1,0}M$
satisfying
\begin{equation*}
	\Ric_{g} + \frac{n+2}{2} g \in \rho^{2n+2} \Gamma(S^{2}(^{\Theta}T^{*}X))
\end{equation*}
and
\begin{equation*}
	\Scal_{g} + (n+1)(n+2) \in \rho^{2n+3} C^{\infty}(X),
\end{equation*}
where $\Ric_{g}$ and $\Scal_{g}$ are respectively
the Ricci tensor and the scalar curvature of $g$
as a Riemannian metric on $\mathring{X}$~\cite{Matsumoto16}*{Theorem 2.5}.
We call this metric an \emph{asymptotically complex hyperbolic Einstein metric},
an \emph{ACHE metric} for short.
A compact $\Theta$-manifold with an ACHE metric
is called an \emph{ACHE manifold}.

\begin{example} \label{example}
Let $\Omega$ be a strictly pseudoconvex domain in a complex manifold $\mathcal{N}$
and $g_{+}$ a Hermitian metric satisfying~\eqref{eq:Kahler} and~\eqref{eq:Einstein}.
We define an ACHE manifold $X$ from $(\Omega,g_{+})$ as follows.
As a manifold,
$X$ is $\overline{\Omega}$ with $C^{\infty}$ structure replaced by
adding the square roots of defining functions of $\Omega$.
The $\Theta$-structure on $X$ is given by the pullback of a $1$-form
$(\sqrt{-1}/2) (\partial - \overline{\partial}) x$ by the identity map $ i \colon X \to \overline{\Omega}$,
where $x$ is a defining function of $\Omega$.
The $\Theta$-metric $g$ is defined by $i^{*} (2g_{+})$.
Then $X$ is an ACHE manifold, 
and the CR structure at infinity is the same as
the integrable CR structure induced from the complex structure on $\mathcal{N}$.
Let $\rho = i^{*}(\sqrt{x/2})$ be a defining function of $X$.
Then for a contact form $\theta$ on $M$,
$\rho$ is normalized by $\theta$ if and only if $x$ is normalized by $\theta$.
\end{example}

\subsection{GJMS operator and $Q$-curvature} \label{section:gjms_op}

In this subsection,
we review the definitions of the critical GJMS operator and $Q$-curvature
for partially integrable CR manifolds.

Let $g$ be an ACH metric on a $\Theta$-manifold $X$ of dimension $2m = 2(n+1)$,
and $T^{1,0}M$ its CR structure at infinity.
Fix a contact form $\theta$ on $M$
and a defining function $\rho$ of $X$ normalized by $\theta$,
and set $x = 2\rho^{2}$.
Let us denote by $\Delta$
the Laplacian of $g$. 
To simplify the notation,
we use the symbol $O(\cdot)$ as follows.
For $a \in \mathbb{C}$ and $l \in \mathbb{N} = \{ 0,1,2, \dotsc \}$,
a function of the form $x^{a} \sum_{j=0}^{l} f_{j} (\log x)^{j}$
with $f_{j} \in C^{\infty}(X)$
is represented by $O(x^{a} (\log x)^{l})$.
The symbol $O(x^{\infty})$ denotes
a smooth function on $X$
that vanishes to infinite order at the boundary.
From the expression of $\Delta$ given in~\cite{Matsumoto16}*{Proposition 3.1},
we have the following

\begin{lem} \label{lem:laplacian}
Let $f$ be a smooth function defined near the boundary of $X$.
For $s \in \mathbb{C}$ and $j \in \mathbb{N}$,
\begin{align}
    \label{eq:laplacian0}
	&\quad\ (\Delta - s(m-s))(f \cdot x^{m-s + j/2})  \\
\notag	& = - \frac{1}{4}j (2m-4s + j)f \cdot x^{m-s+j/2} + O(x^{m-s+(j+1)/2}),\\[1ex]
    \label{eq:laplacian1}
	&\quad\ (\Delta -s(m-s))(f \cdot x^{m-s+j/2} \log x)  \\
	\notag & = - \frac{1}{4}j (2m-4s + j)f \cdot x^{m-s+j/2} \log x \\
	\notag & \quad - (m -2s + j) f \cdot x^{m-s+ j/2}
	+O(x^{m-s+(j+1)/2} \log x),  
\end{align}
and
\begin{align*}
	&\quad\ (\Delta - s(m-s))(f \cdot x^{m-s+j/2} (\log x)^{2}) \\
	& = - \frac{1}{4}j (2m-4s + j)f \cdot x^{m-s+j/2}(\log x)^{2} \\
	& \quad - 2(m-2s + j) f \cdot x^{m-s+j/2} \log x  -2 f \cdot x^{m-s+j/2} \\
	& \quad + O(x^{m-s+(j+1)/2} (\log x)^{2} ).
\end{align*}
\end{lem}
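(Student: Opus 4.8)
The plan is to read off the structure of \(\Delta\) near \(M\) from~\cite{Matsumoto16}*{Proposition 3.1} and reduce everything to a single indicial computation, from which the two logarithmic identities follow by differentiating in the exponent. First I would record that, in terms of the defining function \(\rho\) with \(x = 2\rho^{2}\), the coefficients of \(\Delta\) in that proposition are smooth functions of \(\rho\) (equivalently of \(x^{1/2}\)), and that the leading part of \(\Delta\) acts on a pure power by producing the indicial term \(\lambda(m-\lambda) x^{\lambda}\), while every remaining term raises the order in \(\rho\) by at least one, hence the order in \(x\) by at least \(1/2\). Applying \(\Delta\) to \(f\cdot x^{\lambda}\) with \(f \in C^{\infty}(X)\), the derivatives falling on \(f\) likewise cost at least one power of \(\rho\), so that
\[
	\Delta(f\cdot x^{\lambda}) = \lambda(m-\lambda)\, f\cdot x^{\lambda} + O(x^{\lambda + 1/2}),
\]
where the error is a finite sum of terms \(g_{k}\, x^{\lambda + k/2}\) (\(k \ge 1\), \(g_{k}\in C^{\infty}(X)\)) whose coefficients depend polynomially on \(\lambda\). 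Subtracting \(s(m-s)\) and setting \(\lambda = m-s+j/2\) gives
\[
	\lambda(m-\lambda) - s(m-s) = \Big(m-s+\tfrac{j}{2}\Big)\Big(s-\tfrac{j}{2}\Big) - (m-s)s = -\tfrac{1}{4}j(2m-4s+j),
\]
which is exactly the coefficient in~\eqref{eq:laplacian0}; this settles the base case.

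For the logarithmic identities I would exploit that \(\Delta - s(m-s)\) involves no reference to the exponent \(\lambda\) and therefore commutes with \(\partial/\partial\lambda\), while \(x^{\lambda}\log x = \partial_{\lambda} x^{\lambda}\) and \(x^{\lambda}(\log x)^{2} = \partial_{\lambda}^{2} x^{\lambda}\). Writing the base case as \((\Delta - s(m-s))(f\cdot x^{\lambda}) = c(\lambda)\, f\cdot x^{\lambda} + R(\lambda)\) with \(c(\lambda) = \lambda(m-\lambda)-s(m-s)\) and \(R(\lambda) = O(x^{\lambda + 1/2})\), one differentiation in \(\lambda\) yields
\[
	(\Delta - s(m-s))(f\cdot x^{\lambda}\log x) = c(\lambda)\, f\cdot x^{\lambda}\log x + c'(\lambda)\, f\cdot x^{\lambda} + \partial_{\lambda}R(\lambda),
\]
and a second differentiation produces the \((\log x)^{2}\) formula with the extra constant term coming from \(c''(\lambda) = -2\). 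Since \(c'(\lambda) = m - 2\lambda\) specializes to \(-(m-2s+j)\) at \(\lambda = m-s+j/2\), and since the polynomial dependence of the error on \(\lambda\) gives \(\partial_{\lambda}R = O(x^{\lambda + 1/2}\log x)\) and \(\partial_{\lambda}^{2}R = O(x^{\lambda + 1/2}(\log x)^{2})\), these reproduce~\eqref{eq:laplacian1} and the remaining display verbatim.

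I expect the only genuine work to lie in the base case: extracting from~\cite{Matsumoto16}*{Proposition 3.1} both that the indicial polynomial is precisely \(\lambda(m-\lambda)\) and that the non-indicial part of \(\Delta\) improves the order by a full \(x^{1/2}\), so that the errors in~\eqref{eq:laplacian0} and~\eqref{eq:laplacian1} are genuinely \(O(x^{m-s+(j+1)/2})\) and \(O(x^{m-s+(j+1)/2}\log x)\) rather than worse. Keeping track of the fact that the error coefficients are polynomial in \(\lambda\) is what makes the differentiation step legitimate; once the base case is pinned down with this structure, the derivation of the logarithmic formulas is purely formal.
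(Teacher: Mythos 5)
Your proposal is correct and follows essentially the same route as the paper, which derives the lemma directly from the expression of $\Delta$ in Matsumoto's Proposition 3.1 without further comment; your indicial computation $\lambda(m-\lambda)-s(m-s)=-\tfrac{1}{4}j(2m-4s+j)$ at $\lambda=m-s+j/2$, together with $c'(\lambda)=m-2\lambda$ and $c''(\lambda)=-2$, reproduces all three displayed coefficients, and the error estimates are consistent with the paper's convention that $O(x^{a}(\log x)^{l})$ absorbs all lower powers of $\log x$. Obtaining the logarithmic identities by differentiating in the exponent is a tidy way to organize what the paper leaves as a direct computation, but it is not a genuinely different argument.
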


This Lemma gives an important tool
to define various objects.
The first example is the following

\begin{thm}[{\cite{Matsumoto16}*{Theorem 3.3}}] \label{thm:formal_p}
Let $\theta$ be a contact form on $M$
and $\rho$ a defining function of $X$ normalized by $\theta$.
For any real valued smooth function $f$ on $M$,
there exists a solution $u \in C^{\infty}(\mathring{X})$ of the equation
\begin{equation} \label{eq:char_p_eq}
	\Delta u = O(x^{\infty})
\end{equation}
with the expansion
\begin{equation} \label{eq:exp_p}
	u = F + G x^{m} \log x
\end{equation}
for $F,G \in C^{\infty}(X)$ such that $F|_{M} = f$.
The function $F$ is unique modulo $O(x^{m})$,
and $G$ is unique modulo $O(x^{\infty})$.
Moreover, there is a differential operator
\begin{equation*}
	P_{\theta}\colon C^{\infty}(M) \to C^{\infty}(M)
\end{equation*}
such that
\begin{equation*}
	G|_{M} = -2c_{m} P_{\theta}f.
\end{equation*}
The operator $P_{\theta}$ is formally self-adjoint
and annihilates constant functions.
One has $e^{m\Upsilon} P_{\widehat{\theta}} = P_{\theta}$
under the conformal change $\widehat{\theta} = e^{\Upsilon} \theta$.
\end{thm}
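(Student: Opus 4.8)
The plan is to construct $u$ order by order in powers of $x^{1/2}$ using the indicial analysis of Lemma~\ref{lem:laplacian}, and to read off $P_{\theta}$ as the unique obstruction to a purely power-series solution. Setting $s = m$ in~\eqref{eq:laplacian0}, the leading coefficient becomes $\tfrac14 j(2m - j)$, which is a nonzero constant for $1 \le j \le 2m - 1$ and vanishes exactly at $j = 0$ and $j = 2m$. First I would fix a smooth extension $f_{0} \in C^{\infty}(X)$ with $f_{0}|_{M} = f$; the $j = 0$ case of~\eqref{eq:laplacian0} gives $\Delta f_{0} = O(x^{1/2})$. Then, inductively for $j = 1, \dots, 2m-1$, I would add a term $f_{j} x^{j/2}$ whose boundary value $f_{j}|_{M}$ is chosen to cancel the leading coefficient of the accumulated error at order $x^{j/2}$; since the indicial coefficient is a nonzero constant in this range, each $f_{j}|_{M}$ is uniquely determined and is a differential operator applied to $f$. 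After these steps the error is $O(x^{m})$, and its coefficient at order $x^{m}$ is a differential operator applied to $f$ which, up to a constant, defines $P_{\theta} f$.

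At the critical order $j = 2m$ the indicial coefficient vanishes, so the residual error $E x^{m} + O(x^{m+1/2})$ cannot be killed by a power term. I would instead introduce the logarithmic term $G x^{m} \log x$: by~\eqref{eq:laplacian1} with $s = m$ and $j = 2m$ one has $\Delta(G x^{m} \log x) = -m\, G x^{m} + O(x^{m+1/2} \log x)$, so the choice $G|_{M} = m^{-1} E|_{M}$ removes the obstruction, and writing $G|_{M} = -2 c_{m} P_{\theta} f$ exhibits $P_{\theta}$ as a differential operator. For $j > 2m$ the indicial coefficient is again nonzero, so I would continue the induction, cancelling the power-type and $\log$-type errors by adding further terms $f_{j} x^{j/2}$ and $g_{j} x^{j/2}\log x$. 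A Borel summation of the resulting formal series produces genuine $F, G \in C^{\infty}(X)$ with $u = F + G x^{m}\log x$ and $\Delta u = O(x^{\infty})$, which is~\eqref{eq:char_p_eq} and~\eqref{eq:exp_p}.

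For uniqueness I would take the difference $w$ of two solutions with the same datum $f$; it satisfies $\Delta w = O(x^{\infty})$ and $w = O(x^{1/2})$. The invertibility of the indicial coefficient for $1 \le j \le 2m - 1$ forces the intermediate coefficients to vanish, so $w = O(x^{m})$, giving uniqueness of $F$ modulo $O(x^{m})$; since the obstruction depends only on the already-fixed lower-order data, $G$ is then unique modulo $O(x^{\infty})$. That $P_{\theta}$ annihilates constants is immediate, for when $f = 1$ the constant $u = 1$ solves $\Delta u = 0$ exactly with $G = 0$, whence $P_{\theta} 1 = 0$.

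It remains to establish the two structural properties. Conformal covariance $e^{m\Upsilon} P_{\widehat{\theta}} = P_{\theta}$ should follow from the fact that $\Delta$ and the entire expansion are intrinsic to $g$: the change $\widehat{\theta} = e^{\Upsilon}\theta$ merely rescales the normalized defining function, and propagating this rescaling through the leading behavior of $u$ produces the weight factor $e^{m\Upsilon}$. For formal self-adjointness I would use a Green's identity: applying the divergence theorem to $u_{g}\,\Delta u_{f} - u_{f}\,\Delta u_{g}$ on $\{x > \varepsilon\}$ and letting $\varepsilon \to 0$, the left-hand side tends to $0$ because $\Delta u_{f}, \Delta u_{g} = O(x^{\infty})$, while on the right-hand side the only surviving finite contribution is the pairing of a power term against the coefficient of the $\log$ term, which forces $\int_{M} (P_{\theta} f)\, g = \int_{M} f\, (P_{\theta} g)$. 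The main obstacle I anticipate lies in this last step: the integration by parts must be performed carefully in the $\Theta$-metric, controlling the boundary volume growth and the terms generated by the $x^{m}\log x$ factor so that the divergent and logarithmically divergent pieces cancel and leave precisely the symmetric pairing.
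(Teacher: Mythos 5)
Your construction of the formal solution---inductively determining $f_{j}x^{j/2}$ for $1 \le j \le 2m-1$ via the indicial coefficient $\frac{1}{4}j(2m-j)$ from~\eqref{eq:laplacian0}, inserting the $x^{m}\log x$ term at the critical order $j=2m$ where that coefficient vanishes so that~\eqref{eq:laplacian1} yields $G|_{M}$ as $m^{-1}$ times the obstruction, and then continuing and Borel-summing---is exactly the argument the paper gives, which presents only this existence part and defers the remaining claims to the cited reference of Matsumoto. Your sketches of uniqueness (indicial analysis applied to the difference of two solutions), of $P_{\theta}1=0$, of conformal covariance (tracking the factor $e^{m\Upsilon}$ through the rescaling of $x^{m}\log x$), and of formal self-adjointness (Green's identity on $\{x > \varepsilon\}$ with the surviving pairing coming from the $\log$ coefficient) are the standard arguments and are consistent with that reference.
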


Here we give the existence part of the proof
since the same argument will be used repeatedly. 

\begin{proof}
Let $f \in C^{\infty}(M)$.
First take a smooth extension $f_{0}$ of $f$.
Let $k < 2m-1$ and assume that
$f_{0}, \dots , f_{k} \in C^{\infty}(X)$ satisfy
\begin{equation*}
	\Delta \left( \sum_{j=0}^{k} f_{j}x^{j/2} \right) \in x^{(k+1)/2}C^{\infty}(X).
\end{equation*}
If we take
\begin{equation*}
	f_{k+1} = \frac{4}{(k+1) (-2m + k+1)} x^{-(k+1)/2} 
	\Delta \left( \sum_{j=0}^{k} f_{j}x^{j/2} \right) \in C^{\infty}(X),
\end{equation*}
then~\eqref{eq:laplacian0} gives
\begin{equation*}
	\Delta \left( \sum_{j=0}^{k+1} f_{j} x^{j/2} \right) \in x^{(k+2)/2}C^{\infty}(X).
\end{equation*}
This argument stops at $k=2m-1$ since $-2m + k+1 = 0$.
So we need logarithmic terms.
If we choose
\begin{equation*}
	g_{0} = \frac{1}{m} x^{-m} \Delta \left( \sum_{j=0}^{2m} f_{j} x^{j/2} \right),
\end{equation*}
then~\eqref{eq:laplacian1} gives
\begin{equation*}
	\Delta \left( \sum_{j=0}^{2m} f_{j} x^{j/2} + g_{0} x^{m} \log x \right)
	= O(x^{(2m+1)/2} \log x).
\end{equation*}
This inductive argument gives $F$ and $G$.
\end{proof}

If the metric $g$ is an ACHE metric,
$P_{\theta}$ is determined by $(M,T^{1,0}M)$ and $\theta$.
Then we call $P_{\theta}$ the \emph{critical GJMS operator}.

Next, consider the $Q$-curvature.

\begin{thm}[{\cite{Matsumoto16}*{Theorem 3.5}}] \label{thm:formal_q}
There exists a solution $v \in C^{\infty}(\mathring{X})$ of the equation
\begin{equation}
	\Delta v = m + O(x^{\infty})
\end{equation}
with the expansion
\begin{equation}
	v = \log x + A + B x^{m} \log x
\end{equation}
for $A,B \in C^{\infty}(X)$ such that $A|_{M} = 0$.
The function $A$ is unique modulo $O(x^{m})$,
and $B$ is unique modulo $O(x^{\infty})$.
Moreover, the function $Q_{\theta}$ defined by
\begin{equation*}
	B|_{M} = -2c_{m} Q_{\theta}
\end{equation*}
satisfies the following transformation law
under the conformal change $\widehat{\theta} = e^{\Upsilon} \theta$:
\begin{equation*}
	e^{m \Upsilon} Q_{\widehat{\theta}} = Q_{\theta} + P_{\theta} \Upsilon.
\end{equation*}
\end{thm}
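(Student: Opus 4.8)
The plan is to imitate the inductive existence argument of Theorem~\ref{thm:formal_p}, now starting from the resonant function $\log x$ instead of a smooth boundary extension, to read $Q_{\theta}$ off the coefficient of $x^{m}\log x$, and to obtain the transformation law by comparing the solutions for $\theta$ and $\widehat{\theta}$.

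\emph{Existence.} First I would take $S_{0}=\log x$ and note, from \eqref{eq:laplacian1} with $j=0$ and $s=m$, that $\Delta(\log x)=m+O(x^{1/2})$; the indicial factor $-\tfrac14 j(2m-4s+j)$ becomes $\tfrac14 j(2m-j)$ at $s=m$, which is invertible for $1\le j\le 2m-1$ and vanishes precisely at $j=0$ and $j=2m$. A crucial preliminary point is that this error is \emph{log-free}: writing $\log x=-\tfrac{d}{ds}\big|_{s=m}x^{m-s}$ and differentiating the log-free identity $\Delta x^{m-s}=s(m-s)x^{m-s}+R_{s}$ with $R_{s}=O(x^{m-s+1/2})$ from \eqref{eq:laplacian0}, the logarithmic part of $\Delta(\log x)$ is a multiple of the $s=m$ value of $R_{s}$, which equals $\Delta 1=0$. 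Granting this, the iteration runs exactly as in Theorem~\ref{thm:formal_p}: having arranged $\Delta S_{k}-m\in x^{(k+1)/2}C^{\infty}(X)$ for $S_{k}=\log x+\sum_{j=1}^{k}a_{j}x^{j/2}$ and some $k<2m-1$, the choice
\[
	a_{k+1}=\frac{4}{(k+1)(-2m+k+1)}\,x^{-(k+1)/2}\Bigl[\Delta\Bigl(\log x+\sum_{j=1}^{k}a_{j}x^{j/2}\Bigr)-m\Bigr]
\]
pushes the error to order $x^{(k+2)/2}$ by \eqref{eq:laplacian0}, and produces no logarithms because the error term in \eqref{eq:laplacian0} is itself log-free. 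The scheme stalls at $j=2m$, where $-2m+k+1=0$; there I would introduce a term $B\,x^{m}\log x$ and kill the obstruction using the $-(m-2s+j)f\cdot x^{m-s+j/2}$ contribution in \eqref{eq:laplacian1}. This defines $B$, and hence $Q_{\theta}$ through $B|_{M}=-2c_{m}Q_{\theta}$. Continuing to all orders, Borel-summing, and normalizing away the additive constant (legitimate because $\Delta 1=0$) yields $A,B\in C^{\infty}(X)$ with $A|_{M}=0$ and the stated expansion.

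\emph{Uniqueness.} For two such solutions $v,v'$ the difference $w=v-v'$ is smooth up to the boundary apart from a single $x^{m}\log x$ term, and satisfies $w|_{M}=0$ and $\Delta w=O(x^{\infty})$. Expanding $w=\sum_{j\ge1}c_{j}x^{j/2}+b\,x^{m}\log x$ and using \eqref{eq:laplacian0}, the invertibility of $\tfrac14 j(2m-j)$ for $1\le j\le 2m-1$ forces the lowest nonzero $c_{j}$ to vanish at each stage, so $A$ is determined modulo $O(x^{m})$; feeding this back at $j=2m$ and beyond, where \eqref{eq:laplacian1} shows the $x^{m}\log x$ coefficient contributes a nonzero multiple of $x^{m}$, pins down $B$ modulo $O(x^{\infty})$.

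\emph{Transformation law.} This is the geometric heart. Let $\rho,\widehat{\rho}$ be normalized by $\theta,\widehat{\theta}=e^{\Upsilon}\theta$ and set $x=2\rho^{2}$, $\widehat{x}=2\widehat{\rho}^{2}$. From $\iota^{*}(\rho^{4}g)=\theta\otimes\theta$ and $\iota^{*}(\widehat{\rho}^{4}g)=e^{2\Upsilon}\theta\otimes\theta$ one sees that $\widehat{x}/x$ is smooth and positive up to $M$ with boundary value $e^{\Upsilon}$, so $\log(\widehat{x}/x)$ is smooth with boundary value $\Upsilon$. Rewriting the $Q$-solution $\widehat{v}$ for $\widehat{\theta}$ in terms of $x$ and subtracting $v$, the terms $\log\widehat{x}$ and $\widehat{A}$ contribute smooth pieces, while the expansion $\widehat{B}\,\widehat{x}^{m}\log\widehat{x}=\widehat{B}(\widehat{x}/x)^{m}\,x^{m}\log x+(\text{smooth})\,x^{m}$ isolates the $x^{m}\log x$ part. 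Hence $\widehat{v}-v=F+G\,x^{m}\log x$ with $F|_{M}=\Upsilon$ and $G|_{M}=e^{m\Upsilon}\widehat{B}|_{M}-B|_{M}$, and $\Delta(\widehat{v}-v)=O(x^{\infty})$. Theorem~\ref{thm:formal_p} then identifies $G|_{M}=-2c_{m}P_{\theta}\Upsilon$; substituting $B|_{M}=-2c_{m}Q_{\theta}$ and $\widehat{B}|_{M}=-2c_{m}Q_{\widehat{\theta}}$ and dividing by $-2c_{m}$ gives $e^{m\Upsilon}Q_{\widehat{\theta}}=Q_{\theta}+P_{\theta}\Upsilon$.

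The hard part will be controlling the logarithms in the existence step: Lemma~\ref{lem:laplacian} only bounds the errors by $O(x^{m-s+(j+1)/2}\log x)$, so without the observation that $\Delta 1=0$ forces the spurious logarithms to cancel, one could not conclude that the expansion takes the clean form $\log x+A+Bx^{m}\log x$. Once that is secured, uniqueness is a routine indicial computation, and the transformation law reduces to recognizing $\widehat{v}-v$ as a solution of the homogeneous problem with boundary data $\Upsilon$, whose $x^{m}\log x$ obstruction is governed by the critical GJMS operator $P_{\theta}$.
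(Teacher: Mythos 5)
The paper does not prove this theorem itself---it is quoted from \cite{Matsumoto16}---but your argument is correct and is precisely the inductive scheme the paper sketches in the proof of Theorem~\ref{thm:formal_p} and reuses repeatedly, adapted to the resonant starting term $\log x$. Your key supporting observations---that the a priori $O(x^{1/2}\log x)$ error in $\Delta \log x$ is in fact log-free because the $s=m$ value of the remainder in \eqref{eq:laplacian0} applied to $f=1$ is $\Delta 1 = 0$, and that the transformation law follows by applying the uniqueness clause of Theorem~\ref{thm:formal_p} to $\widehat{v}-v$, whose boundary value is $\Upsilon$ since $(\widehat{x}/x)|_{M}=e^{\Upsilon}$---are exactly the right way to fill in the details, and I see no gap.
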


The function $Q_{\theta}$ is also determined by $(M,T^{1,0}M)$ and $\theta$
if the metric $g$ is an ACHE metric;
in this case, we call $Q_{\theta}$ the \emph{$Q$-curvature}.

From Theorem~\ref{thm:formal_q},
we have a normalization of defining functions,
which is a generalization of Fefferman's defining functions. 

\begin{prop}\label{prop:flat_def_function}
For any contact form $\theta$,
there exists a defining function $\widetilde{\rho}$ of $X$ normalized by $\theta$
such that $\widetilde{x} = 2{\widetilde{\rho}}^{2}$ satisfies
\begin{equation} \label{eq:flat_def_function}
	\Delta \log \widetilde{x} = m + O(x^{m}).
\end{equation}
Moreover,
such a $\widetilde{\rho}$ is unique modulo $O(x^{(2m+1)/2})$
and such an $\widetilde{x}$ is unique modulo $O(x^{m+1})$.
\end{prop}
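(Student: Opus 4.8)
The plan is to reduce the statement to the indicial analysis already contained in Lemma~\ref{lem:laplacian} and Theorem~\ref{thm:formal_q}, together with the observation that two defining functions normalized by the same $\theta$ differ by a factor whose boundary restriction is trivial. Throughout I fix one defining function $\rho$ normalized by $\theta$ and write $x = 2\rho^{2}$; any competitor is of the form $\widetilde{x} = e^{2\Upsilon} x$ with $\Upsilon \in C^{\infty}(X)$, for which $\widetilde{\rho} = \sqrt{\widetilde{x}/2} = e^{\Upsilon}\rho$. Since $\iota^{*}(\widetilde{\rho}^{4} g) = e^{4\Upsilon|_{M}} \iota^{*}(\rho^{4} g)$, this $\widetilde{\rho}$ is again normalized by $\theta$ precisely when $\Upsilon|_{M} = 0$.

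For existence I would simply borrow the function $v = \log x + A + B x^{m}\log x$ of Theorem~\ref{thm:formal_q}, which satisfies $\Delta v = m + O(x^{\infty})$ and $A|_{M} = 0$, and set $\log \widetilde{x} = \log x + A$, i.e. $\Upsilon = A/2$. Then $\Delta \log \widetilde{x} = m + O(x^{\infty}) - \Delta(B x^{m}\log x)$, and a single application of \eqref{eq:laplacian1} with $s = m$ and $j = 2m$ gives $\Delta(B x^{m}\log x) = -m B x^{m} + O(x^{(2m+1)/2}\log x) = O(x^{m})$. Hence $\Delta \log \widetilde{x} = m + O(x^{m})$, and $\widetilde{\rho} = e^{A/2}\rho$ is normalized by $\theta$ because $A|_{M} = 0$.

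For uniqueness I would take two admissible functions $\widetilde{x}_{1}, \widetilde{x}_{2}$ and write $\widetilde{x}_{2} = e^{2w}\widetilde{x}_{1}$ with $w \in C^{\infty}(X)$ and $w|_{M} = 0$ (both being normalized by $\theta$). Subtracting the two defining equations yields $2\Delta w = O(x^{m})$, hence $\Delta w = O(x^{m})$. I then run the standard indicial induction: if $w = O(x^{k/2})$ with $1 \le k \le 2m-1$, then by \eqref{eq:laplacian0} with $s = m$ the coefficient of $x^{k/2}$ in $\Delta w$ is $-\tfrac14 k(k-2m)$ times the leading coefficient of $w$; since $\Delta w = O(x^{m})$ and $k(k-2m) \neq 0$ in this range, that leading coefficient vanishes and $w = O(x^{(k+1)/2})$. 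Iterating from $k = 1$ to $k = 2m-1$ forces $w = O(x^{m})$. Translating back, $\widetilde{\rho}_{2} - \widetilde{\rho}_{1} = (e^{w}-1)\widetilde{\rho}_{1} = O(x^{m})\,O(x^{1/2}) = O(x^{(2m+1)/2})$ and $\widetilde{x}_{2} - \widetilde{x}_{1} = (e^{2w}-1)\widetilde{x}_{1} = O(x^{m})\,O(x) = O(x^{m+1})$, which are exactly the two asserted ambiguities.

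I do not expect a genuine obstacle here: the entire argument turns on the single fact that the indicial polynomial $j \mapsto j(j-2m)$ has its first positive root exactly at $j = 2m$ (order $x^{m}$). This simultaneously lets the power-series construction proceed cleanly down to order $x^{m}$ --- the would-be obstruction there is absorbed into the admissible $O(x^{m})$ error rather than generating a new logarithmic term --- and pins the residual ambiguity of $\widetilde{x}$ at order $x^{m+1}$. The only care needed is the bookkeeping that converts the $O(x^{m})$ bound on $w$ into the stated $O(x^{(2m+1)/2})$ bound for $\widetilde{\rho}$ and $O(x^{m+1})$ bound for $\widetilde{x}$ via $\widetilde{x} = 2\widetilde{\rho}^{2}$.
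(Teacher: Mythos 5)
Your proof is correct and takes essentially the same route as the paper's: existence by setting $\widetilde{\rho}=e^{A/2}\rho$ with $A$ taken from Theorem~\ref{thm:formal_q}, and uniqueness by the indicial induction from \eqref{eq:laplacian0} using that $j(j-2m)\neq 0$ for $1\le j\le 2m-1$. The paper's proof is just a two-line compression of exactly this argument, so no further comparison is needed.
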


\begin{proof}
Let $v =  \log x + A + B x^{m} \log x$ be as in Theorem~\ref{thm:formal_q}.
Then $\widetilde{\rho} = e^{A/2} \rho$ is also
a defining function of $X$ normalized by $\theta$,
and $\widetilde{x}$ satisfies~\eqref{eq:flat_def_function};
this proves the existence of $\widetilde{\rho}$.
Uniqueness of $\widetilde{\rho}$ (and $\widetilde{x}$) follows from~\eqref{eq:laplacian0}.
\end{proof}

\section{Scattering theory on ACH manifolds} 
\label{section:Scattering theory on ACH manifolds}
In this section,
assume that 
$X$ is an ACH manifold of dimension $2m = 2(n+1)$
and $\rho$ is a defining function of $X$
normalized by a contact form $\theta$ on $M$.
Set $x = 2\rho^{2}$ to simplify the notation.

\subsection{Poisson operator and scattering matrix} \label{section:poisson_op}
First we review the scattering theory on ACH manifolds.
Some basic results about the Laplacian $\Delta$ are obtained by
Epstein-Melrose-Mendoza~\cite{Epstein-Melrose-Mendoza91}.

\begin{thm} \label{thm:resolvent}
(1) The pure point spectrum $\sigma_{pp} (\Delta)$ 
is a finite subset of $(0, m^2/4)$,
and the absolutely continuous spectrum $\sigma_{ac}(\Delta)$ is 
the interval $[m^2/4, \infty)$.

(2) For $s \in \mathbb{C}$ with
$\Re s> m/2$ and $s(m-s) \notin \sigma_{pp}(\Delta)$,
the resolvent
\begin{equation*}
	\mathcal{R}(s)=(\Delta-s(m-s))^{-1} 
\end{equation*}
is an operator from $x^{\infty} C^{\infty}(X)$ to $x^{s} C^{\infty} (X) $
and holomorphic in $s$.
Here $x^{\infty} C^{\infty}(X)$ stands for the space of
smooth functions on $X$ that vanish to infinite order at the boundary.
\end{thm}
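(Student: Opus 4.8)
The plan is to decouple the elementary self-adjoint spectral theory of $\Delta$ from the fine boundary behaviour of its resolvent, the latter being where the $\Theta$-pseudodifferential calculus of Epstein-Melrose-Mendoza~\cite{Epstein-Melrose-Mendoza91} carries the real weight. For part (1), I would first fix the essential spectrum from the asymptotic geometry: near $M$ the ACH metric is a controlled perturbation of the model complex hyperbolic metric, whose Laplacian has spectrum exactly $[m^2/4,\infty)$, so by Weyl's theorem on stability of the essential spectrum under the relatively compact interior one gets $\sigma_{\mathrm{ess}}(\Delta)=[m^2/4,\infty)$, and a singular-sequence construction supported near infinity realizes every $\lambda\ge m^2/4$. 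Purity of the absolutely continuous part and absence of embedded eigenvalues or singular continuous spectrum I would extract from the limiting absorption principle, i.e. the existence of boundary values of the resolvent family built in part (2), giving $\sigma_{ac}(\Delta)=[m^2/4,\infty)$. The remaining point spectrum then lies in $[0,m^2/4)$ and is nonnegative since $\Delta\ge 0$; that $0$ is excluded follows because an $L^2$ harmonic function on a complete manifold has $\int|\nabla u|^2=0$, hence is constant, hence vanishes on a manifold of infinite volume. Finiteness of $\sigma_{pp}(\Delta)$ I would read off from the analytic Fredholm structure below, the eigenvalues being the finitely many poles in the compact region $\Re s>m/2$.

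For part (2), note first that for $\Re s>m/2$ the parameter $\lambda=s(m-s)$ is nonreal whenever $s$ leaves the real axis (since $\Im\lambda=t(m-2\Re s)$ with $m-2\Re s<0$) and, on the axis, real but less than $m^2/4$; under the hypothesis $s(m-s)\notin\sigma_{pp}(\Delta)$ it therefore avoids the spectrum, so $\mathcal{R}(s)$ already exists as a bounded, holomorphic family on $L^2$. The content is the mapping $x^{\infty}C^{\infty}(X)\to x^{s}C^{\infty}(X)$ with holomorphy into that target. I would build a parametrix $\mathcal{M}(s)$, holomorphic in $s$ and sending $x^{\infty}C^{\infty}(X)$ into $x^{s}C^{\infty}(X)$, by the inductive indicial scheme already used in the proof of Theorem~\ref{thm:formal_p}: seek $\mathcal{M}(s)\psi\sim x^{s}\sum_{j\ge 0}f_j x^{j/2}$ and solve order by order using Lemma~\ref{lem:laplacian}. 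The indicial roots of $\Delta-s(m-s)$ are $s$ and $m-s$, and the indicial factor at step $j$ is a nonzero multiple of $j(4s-2m+j)$, which vanishes for a positive integer $j$ only when $j=2m-4s$; since $\Re s>m/2$ forces $\Re(2m-4s)<0$, no resonance arises, so there are no logarithmic terms and the coefficients $f_j$ depend holomorphically on $s$, yielding $(\Delta-s(m-s))\mathcal{M}(s)\psi=\psi+O(x^{\infty})$.

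Writing $(\Delta-s(m-s))\mathcal{M}(s)=I-\mathcal{E}(s)$ with $\mathcal{E}(s)$ compact and holomorphic on $L^2$, analytic Fredholm theory renders $(I-\mathcal{E}(s))^{-1}$ meromorphic with poles precisely at the eigenvalue parameters, whence $\mathcal{R}(s)=\mathcal{M}(s)(I-\mathcal{E}(s))^{-1}$ inherits both the stated mapping property and holomorphy away from $\sigma_{pp}(\Delta)$. Uniqueness then pins the output: any $L^2$ solution of $(\Delta-s(m-s))u=\psi$ admits a boundary expansion in the roots $x^{s}$ and $x^{m-s}$, and because $\Re(m-s)<m/2$ makes $x^{m-s}$ non-$L^2$ while $x^{s}$ is $L^2$, the $x^{m-s}$ coefficient must vanish, leaving $u\in x^{s}C^{\infty}(X)$ and $u=\mathcal{R}(s)\psi$.

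The main obstacle is the construction of $\mathcal{M}(s)$ with a kernel precise enough that $\mathcal{E}(s)$ is genuinely compact on $L^2$ and that $\mathcal{M}(s)$ respects the target $x^{s}C^{\infty}(X)$ uniformly in $s$. This forces one to work on the blown-up double of $X$ and to control the resolvent kernel along the front face and the two boundary faces in the $\Theta$-calculus, which is exactly the technical core of~\cite{Epstein-Melrose-Mendoza91}; by comparison, the spectral input for part (1) and the indicial bookkeeping of the formal solution are routine.
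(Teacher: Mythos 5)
The paper gives no proof of this theorem: it is stated explicitly as a review of results of Epstein--Melrose--Mendoza \cite{Epstein-Melrose-Mendoza91}, so there is no internal argument to compare yours against. Read as a reconstruction of the proof in that reference, your outline follows the standard route and is sound where it is explicit: the computation that $s(m-s)$ avoids $[m^{2}/4,\infty)$ when $\Re s>m/2$, the exclusion of $0$ from the point spectrum on a complete infinite-volume manifold, the $L^{2}$ dichotomy between the indicial roots (the volume form is comparable to $x^{-m-1}\,dx\wedge\theta\wedge(d\theta)^{n}$, so $x^{m-s}\in L^{2}$ iff $\Re s<m/2$ while $x^{s}\in L^{2}$ iff $\Re s>m/2$), and the analytic Fredholm reduction that simultaneously yields holomorphy and the finiteness of $\sigma_{pp}(\Delta)$. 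The one step that is genuinely understated is the construction of $\mathcal{M}(s)$: for $\psi\in x^{\infty}C^{\infty}(X)$ the boundary indicial scheme of Theorem~\ref{thm:formal_p} is vacuous --- every coefficient equation reads $0=O(x^{\infty})$ at leading order, the resulting formal series is either trivial or a formal solution of the homogeneous equation, and the error $I-(\Delta-s(m-s))\mathcal{M}(s)$ obtained this way is not compact on $L^{2}$. Both the compactness of $\mathcal{E}(s)$ and the assertion that the exact solution lies in $x^{s}C^{\infty}(X)$ (rather than merely satisfying conormal estimates) require the full $\Theta$-pseudodifferential description of the resolvent kernel on the blown-up double space. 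You do flag this in your closing paragraph, so the proposal should be read as a correct high-level map of \cite{Epstein-Melrose-Mendoza91} with its technical core deferred to that reference --- which is, in effect, exactly what the paper itself does by citing the result without proof.
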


Let $s \in \mathbb{C}$ with
\begin{equation*}
	\Re s > m/2, \qquad 4s-2m \notin \mathbb{Z}, \qquad s(m-s) \notin \sigma_{pp}(\Delta).
\end{equation*}
For each $f \in C^{\infty}(M)$,
there exists a unique solution $u \in C^{\infty}(\mathring{X})$ of the equation
\begin{equation} \label{eq:poisson_eq}
	(\Delta - s(m-s))u = 0
\end{equation}
with the expansion
\begin{equation} \label{eq:exp_poisson}
	u = Fx^{m-s} + G x^{s}
\end{equation}
for $F,G \in C^{\infty}(X)$ such that $F|_{M} = f$.
The \emph{Poisson operator}
\begin{equation*}
	\mathcal{P}_{\theta}(s)\colon C^{\infty}(M) \to C^{\infty}(\mathring{X})
\end{equation*}
is then defined by $\mathcal{P}_{\theta}(s)f = u$,
and the \emph{scattering matrix}
\begin{equation*}
	\mathcal{S}_{\theta}(s)\colon C^{\infty}(M) \to C^{\infty}(M)
\end{equation*}
by $\mathcal{S}_{\theta}(s)f = G|_{M}$.
These two operators are holomorphic in $s$ and determined by $\theta$.
The Poisson operator is holomorphic across $s=m$.
The scattering matrix is formally self-adjoint for $s \in \mathbb{R}$
and satisfies
\begin{equation} \label{eq:trans_law_scat_mat}
	e^{s\Upsilon}\mathcal{S}_{\widehat{\theta}}(s) f = \mathcal{S}_{\theta}(s)(e^{(m-s)\Upsilon} f)
\end{equation}
if $\widehat{\theta} = e^{\Upsilon} \theta$.

As we will use the explicit form of these operators near $s=m$,
we here recall the constructions of $\mathcal{P}_{\theta}(s)$ and $\mathcal{S}_{\theta}(s)$.

\begin{lem} 
Let $s \in \mathbb{C}$ and $k \in \mathbb{N}$.
Then there exist differential operators $p_{k,s}$ on $M$
that are determined by the following conditions:
\begin{itemize}
	\item $p_{0,s} = 1$;
	\item the operator $p_{k,s}$ is holomorphic in $s$
		outside the discrete set
		$\Lambda_{k} = \{ s \in \mathbb{C} \mid 4s-2m \in \mathbb{N} \cap [0,k] \}$,
		and has a single pole at each $s_{0} \in \Lambda_{k}$;
	\item for $s \notin \Lambda_{k}$
		and $f \in C^{\infty}(M)$,
		\begin{equation} \label{eq:asymp}
			(\Delta -s(m-s))\left( x^{m-s} \sum_{j=0}^{k} (p_{j,s}f)
			x^{j/2} \right)
			\in x^{m-s+ (k+1)/2} C^{\infty}(X).
		\end{equation}
\end{itemize}
\end{lem}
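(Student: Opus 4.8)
The plan is to construct the operators $p_{k,s}$ by induction on $k$, running the same order-by-order cancellation as in the existence proof of Theorem~\ref{thm:formal_p}, but now recording the linear dependence on the boundary datum $f$. Throughout I regard the partial sum as an asymptotic expansion whose coefficients are functions on $M$, extended near the boundary by pullback under the projection $\pr$ (equivalently, taken independent of the normal variable $r$); with this convention the near-boundary normal form of $\Delta$ from \cite{Matsumoto16}*{Proposition 3.1} acts on each term $(p_{j,s}f)\,x^{m-s+j/2}$ through the scalar indicial factor appearing in \eqref{eq:laplacian0} together with tangential operators on $M$, so that all operators produced are tangential.

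For the base case I put $p_{0,s}=1$. Since $m-s$ is an indicial root, \eqref{eq:laplacian0} with $j=0$ has vanishing leading coefficient $-\tfrac14\cdot 0\cdot(2m-4s)=0$, whence $(\Delta-s(m-s))(x^{m-s}f)\in x^{m-s+1/2}C^\infty(X)$; this is \eqref{eq:asymp} for $k=0$. For the inductive step, assume $p_{0,s},\dots,p_{k,s}$ are known and \eqref{eq:asymp} holds at level $k$, so that
\[
(\Delta-s(m-s))\Big(x^{m-s}\sum_{j=0}^{k}(p_{j,s}f)\,x^{j/2}\Big)=x^{m-s+(k+1)/2}\,r+O(x^{m-s+(k+2)/2})
\]
for some $r\in C^\infty(X)$. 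I set $E_{k+1,s}f:=r|_{M}$; by \eqref{eq:laplacian0} and the normal form of $\Delta$ this is a finite-order tangential differential operator applied to the boundary coefficients $p_{0,s}f,\dots,p_{k,s}f$. Using \eqref{eq:laplacian0} with $j=k+1$, the term $(p_{k+1,s}f)\,x^{m-s+(k+1)/2}$ is sent to leading order to $-\tfrac14(k+1)(2m-4s+k+1)(p_{k+1,s}f)\,x^{m-s+(k+1)/2}$, so choosing
\[
p_{k+1,s}=\frac{4}{(k+1)(2m-4s+k+1)}\,E_{k+1,s}
\]
cancels the coefficient at order $x^{m-s+(k+1)/2}$ and gives \eqref{eq:asymp} at level $k+1$. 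This is moreover the only choice annihilating that coefficient, which is what determines the operators; together with $p_{0,s}=1$ this fixes the whole family.

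It then remains to track the dependence on $s$. The scalar factor $(2m-4s+k+1)^{-1}$ introduced at the $(k+1)$-st step is holomorphic except for a simple pole at the point where $4s-2m=k+1$, while $E_{k+1,s}$ depends on $s$ only polynomially apart from the poles it inherits from $p_{0,s},\dots,p_{k,s}$, located by induction at $4s-2m\in\{1,\dots,k\}$. Since the newly created pole sits at a point distinct from the inherited ones, no pole is compounded; hence $p_{k+1,s}$ is holomorphic outside $\Lambda_{k+1}$ and has at most a simple pole at each point of $\Lambda_{k+1}$. (The index $j=0$ never enters a denominator, so $p_{k,s}$ is in fact regular at $s=m/2$.)

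Finally, the obstacle. The order-by-order cancellation is routine once Lemma~\ref{lem:laplacian} is in hand, and the pole bookkeeping above is elementary. The point demanding care is the assertion that each $p_{k,s}$ is a genuine differential operator on $M$: this requires the fixed normal-independent extension convention together with a verification, via the explicit near-boundary expression for $\Delta$ in \cite{Matsumoto16}*{Proposition 3.1}, that the coefficient $E_{k+1,s}f$ is extension-independent and involves only tangential derivatives of $f$. I expect this tangentiality check, rather than the recursion itself, to be the main technical content.
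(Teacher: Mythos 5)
Your proof is correct and takes essentially the same route as the paper, which disposes of the lemma in one line by saying that $p_{k,s}$ is determined inductively from~\eqref{eq:laplacian0} exactly as in the existence proof of Theorem~\ref{thm:formal_p}; your recursion $p_{k+1,s}=\tfrac{4}{(k+1)(2m-4s+k+1)}E_{k+1,s}$ is precisely the paper's formula specialized away from $s=m$, and your pole bookkeeping and the tangentiality/extension remark are accurate elaborations of what the paper leaves implicit (including the observation that $s=m/2$ is not actually a pole despite lying in $\Lambda_k$).
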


\begin{proof}
The operator $p_{k,s}$ is determined inductively from~\eqref{eq:laplacian0}
as in the proof of Theorem~\ref{thm:formal_p}.
\end{proof}

Using this Lemma,
we would like to construct a holomorphic family $u_{s}$
in $C^{\infty}(\mathring{X})$ near $s=m$
satisfying
\begin{equation} \label{eq:asymp_poisson_eq}
	\begin{cases}
	(\Delta - s(m-s))u_{s} = O(x^{\infty}), \\
	u_{s} - fx^{m-s} = o(x^{m-s}).
	\end{cases}
\end{equation}
However,
this is not obvious since $p_{k,s}$ has a single pole at $s=m$ for $k \geq 2m$.
To avoid this difficulty,
we need additional arguments.

\begin{prop} \label{prop:formal_solution}
Let $f$ be a smooth function on $M$
and $s \in \mathbb{C}$ with $\Re s > m/2$.
There are meromorphic families $\mathcal{F}_{s}$ and $\mathcal{G}_{s}$
of smooth functions on $X$
satisfying the following:
\begin{enumerate}
	\item $\mathcal{F}_{s}$ and $\mathcal{G}_{s}$ are holomorphic in $s$ near $s = m$;
	\item $\mathcal{F}_{s}|_{M} = (s-m) f$;
	\item $\mathcal{F}_{m} + \mathcal{G}_{m} x^{m} \in x^{\infty}C^{\infty}(X)$;
	\item the function
		\begin{equation*}
			u_{s} =
			\begin{cases}
				(s-m)^{-1} \left[\mathcal{F}_{s} x^{m-s} + \mathcal{G}_{s} x^{s}
				- \mathcal{F}_{m} - \mathcal{G}_{m} x^{m} \right] & s \neq m, \\
				\Dot{\mathcal{F}}_{m} + \Dot{\mathcal{G}}_{m} x^{m}
				+ (- \mathcal{F}_{m} + \mathcal{G}_{m} x^{m}) \log x & s = m,
			\end{cases}
		\end{equation*}
		is a solution of~\eqref{eq:asymp_poisson_eq}.
\end{enumerate}
Here $\Dot{\mathcal{F}}_{m} = (d/ds)|_{s=m} \mathcal{F}_{s}$
and $\Dot{\mathcal{G}}_{m} = (d/ds)|_{s=m} \mathcal{G}_{s}$.
\end{prop}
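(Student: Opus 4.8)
The plan is to produce $\mathcal{F}_s$ and $\mathcal{G}_s$ from the formal incoming and outgoing solutions of~\eqref{eq:poisson_eq}, renormalized by the factor $s-m$ to cancel the pole at $s=m$. First I would assemble the incoming solution: using the operators $p_{k,s}$ of the preceding Lemma together with Borel's lemma, choose a smooth function $\phi_s$ on $X$ with asymptotic expansion $\phi_s\sim\sum_{j\ge 0}(p_{j,s}f)x^{j/2}$, depending meromorphically on $s$; by~\eqref{eq:asymp} the function $\phi_s x^{m-s}$ then satisfies $(\Delta-s(m-s))(\phi_s x^{m-s})=O(x^\infty)$. The indicial factor $-\tfrac14 j(2m-4s+j)$ in~\eqref{eq:laplacian0} vanishes at $s=m$ precisely for $j=2m$, so exactly one division by a factor vanishing at $s=m$ occurs in the recursion; hence $\phi_s$ has only a \emph{simple} pole at $s=m$ (and, since the remaining poles sit at $s=m\pm\tfrac14,\dots$, is otherwise holomorphic nearby). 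Setting $\mathcal{F}_s=(s-m)\phi_s$ gives a family holomorphic across $s=m$ with $\mathcal{F}_s|_M=(s-m)p_{0,s}f=(s-m)f$, which is conditions (1) and (2).

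Next I would identify the residue and match it by an outgoing solution. Put $\mathcal{F}_m=\Res_{s=m}\phi_s$; since the pole comes only from the terms $j\ge 2m$, one has $\mathcal{F}_m=O(x^m)$, and taking the residue at $s=m$ of the identity $(\Delta-s(m-s))(\phi_s x^{m-s})=O(x^\infty)$ (using that $s(m-s)$ vanishes at $s=m$) shows $\Delta\mathcal{F}_m=O(x^\infty)$. The outgoing indicial factor $-\tfrac14 k(4s-2m+k)$ is nonvanishing for $k\ge 1$ near $s=m$, so outgoing formal solutions $\psi_s x^s$ carry no logarithmic terms and are determined modulo $O(x^\infty)$ by their leading coefficient $\psi_s|_M$; consequently $\mathcal{F}_m$, having no log terms and leading order $x^m$, is itself such an outgoing solution. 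I would therefore choose a meromorphic family of outgoing solutions $\psi_s x^s$ with $(\Delta-s(m-s))(\psi_s x^s)=O(x^\infty)$, with a simple pole at $s=m$, normalized so that $\Res_{s=m}(\psi_s x^s)=\mathcal{G}_m x^m=-\mathcal{F}_m+O(x^\infty)$, and set $\mathcal{G}_s=(s-m)\psi_s$. This is holomorphic across $s=m$ and yields condition (3): $\mathcal{F}_m+\mathcal{G}_m x^m\in x^\infty C^\infty(X)$.

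To finish I would verify condition (4). For $s\neq m$ one has $u_s=\phi_s x^{m-s}+\psi_s x^s-(s-m)^{-1}(\mathcal{F}_m+\mathcal{G}_m x^m)$; the first two terms each solve~\eqref{eq:poisson_eq} modulo $O(x^\infty)$, while the subtracted term is $(s-m)^{-1}O(x^\infty)=O(x^\infty)$ by (3), so $(\Delta-s(m-s))u_s=O(x^\infty)$. The normalization $u_s-fx^{m-s}=o(x^{m-s})$ follows from $\phi_s|_M=f$, from $x^s=o(x^{m-s})$ for $\Re s>m/2$, and from (3). At $s=m$, where the numerator vanishes by (3), the quotient tends to $\tfrac{d}{ds}\big|_{s=m}(\mathcal{F}_s x^{m-s}+\mathcal{G}_s x^s)$; differentiating $x^{m-s}$ and $x^s$ reproduces the stated expression $\Dot{\mathcal{F}}_m+\Dot{\mathcal{G}}_m x^m+(-\mathcal{F}_m+\mathcal{G}_m x^m)\log x$, and $\Delta u_m=O(x^\infty)$ is obtained by differentiating the equation at $s=m$, the logarithmic contributions being controlled by~\eqref{eq:laplacian1}.

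The \textbf{crux} of the argument is the analysis at the resonance $s=m$: establishing that the incoming expansion develops only a simple pole there, that its residue $\mathcal{F}_m$ is a genuine solution of $\Delta(\cdot)=O(x^\infty)$ of outgoing type (leading order $x^m$, no logarithm, by the indicial uniqueness underlying Theorems~\ref{thm:formal_p} and~\ref{thm:formal_q}), and hence that it can be cancelled by a matching outgoing family. Once this residue bookkeeping is in place, the holomorphy of $\mathcal{F}_s$ and $\mathcal{G}_s$ and the verification of (1)--(4) are routine; the delicate point is tracking that exactly one resonant division occurs and that it propagates identically through the two indicial branches so that the residues cancel to infinite order.
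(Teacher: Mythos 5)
Your proposal is correct and follows essentially the same route as the paper: renormalize the incoming formal expansion by the factor $s-m$, identify its residue at $s=m$ as an outgoing-type formal solution starting at order $x^{m}$, cancel it by a matching outgoing family via Borel's lemma, and obtain $u_{m}$ as the holomorphic limit of the difference quotient. The only cosmetic difference is that you justify the residue matching by the uniqueness of log-free outgoing expansions (the indicial factor $-\tfrac14 k(4s-2m+k)$ being nonzero for $k\geq 1$ near $s=m$), whereas the paper verifies the equivalent explicit identity $\Res_{s=m}p_{2m+j,s}f = p_{j,0}\bigl(\Res_{s=m}p_{2m,s}f\bigr)$ directly from the recursion.
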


\begin{proof}
Taking the residue of~\eqref{eq:asymp} at $s=m$,
we have
\begin{equation*}
	\Delta \left(x^{m} \sum_{j=0}^{k} (\Res_{s=m} p_{2m+j,s} f) x^{j/2} \right) 
	\in x^{m+ (k + 1)/2} C^{\infty}(X).
\end{equation*}
While replacing $f$ with $\Res_{s=m} p_{2m,s}f$
and substituting $s=0$ into~\eqref{eq:asymp},
we obtain
\begin{equation*}
	\Delta \left(x^{m} \sum_{j=0}^{k} (p_{j,0} \Res_{s=m}p_{2m,s}f) x^{j/2} \right)
	\in x^{m+(k+1)/2}C^{\infty}(X).
\end{equation*}
From the construction of $p_{k,s}$,
we observe
\begin{equation*}
	\Res_{s=m} p_{2m+j,s} f = p_{j,0}\left(\Res_{s=m} p_{2m,s}f \right).
\end{equation*} 
Consequently,
the function
\begin{equation*}
	x^{m-s} \sum_{j=0}^{2m+k} (s-m) (p_{j,s}f) x^{j/2}
	- x^{s} \sum_{j=0}^{k} p_{j,0}(\Res_{s=m} p_{2m,s}f) x^{j/2}
\end{equation*}
is holomorphic near $s=m$ and equal to zero at $s=m$.
Moreover
the image of this function by $\Delta - s(m-s)$ is contained in
$x^{2m-s+(k +1)/2} C^{\infty}(X) + x^{s + (k + 1)/2} C^{\infty}(X)$.
Borel's lemma gives holomorphic families $\mathcal{F}_{s}$ and $\mathcal{G}_{s}$
in $C^{\infty}(X)$ near $s=m$ satisfying (1)--(3).

Since $\mathcal{F}_{s}$ and $\mathcal{G}_{s}$ are holomorphic,
the function
\begin{equation*}
	u_{s} = \frac{1}{s-m} \left[\mathcal{F}_{s} x^{m-s} + \mathcal{G}_{s} x^{s}
	- \mathcal{F}_{m} - \mathcal{G}_{m} x^{m} \right]
\end{equation*}
extends holomorphically across $s=m$ in $C^{\infty}(\mathring{X})$.
In particular,
\begin{equation*}
	u_{m} = \Dot{\mathcal{F}}_{m} + \Dot{\mathcal{G}}_{m} x^{m}
	+ (- \mathcal{F}_{m} + \mathcal{G}_{m} x^{m}) \log x.
\end{equation*}
Moreover
$(\Delta-s(m-s))u_{s}$ is also holomorphic near $s=m$ in $x^{\infty}C^{\infty}(X)$.
Hence $\mathcal{F}_{s}$ and $\mathcal{G}_{s}$ satisfy (4) also.
\end{proof}

Using Proposition~\ref{prop:formal_solution},
we can write explicit forms of the Poisson operator
and scattering matrix.

\begin{thm} \label{thm:explicit_poisson_op}
For $f \in C^{\infty}(M)$,
let $\mathcal{F}_{s}$, $\mathcal{G}_{s}$ and $u_{s}$ be as in
Proposition~\ref{prop:formal_solution}.
Set
\begin{equation*}
	\mathcal{H}_{s} = - x^{-s} \mathcal{R}(s)(\Delta -s(m-s))u_{s} \in C^{\infty}(X).
\end{equation*}
Then
\begin{equation*}
	\mathcal{P}_{\theta}(s)f =
	\begin{cases}
	(s-m)^{-1} [\mathcal{F}_{s} x^{m-s} + \mathcal{G}_{s} x^{s}
	- \mathcal{F}_{m}- \mathcal{G}_{m} x^{m} ] + \mathcal{H}_{s} x^{s} & s \neq m, \\
	\Dot{\mathcal{F}}_{m} + (\Dot{\mathcal{G}}_{m}+ \mathcal{H}_{m}) x^{m}
	+ (-\mathcal{F}_{m} + \mathcal{G}_{m} x^{m}) \log x & s =m,
	\end{cases}
\end{equation*}
and
\begin{equation*}
	\mathcal{S}_{\theta}(s)f = \frac{1}{s-m} \mathcal{G}_{s}|_{M} + \mathcal{H}_{s}|_{M}.
\end{equation*}
\end{thm}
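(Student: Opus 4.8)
The plan is to upgrade the approximate solution $u_{s}$ of Proposition~\ref{prop:formal_solution} to a genuine solution of~\eqref{eq:poisson_eq} by a single resolvent correction, and then to read off the indicial coefficients $F$ and $G$. First I would observe that $(\Delta - s(m-s))u_{s} \in x^{\infty}C^{\infty}(X)$, so Theorem~\ref{thm:resolvent}(2) places $\mathcal{R}(s)(\Delta - s(m-s))u_{s}$ in $x^{s}C^{\infty}(X)$; hence $\mathcal{H}_{s} = -x^{-s}\mathcal{R}(s)(\Delta - s(m-s))u_{s}$ lies in $C^{\infty}(X)$ and is holomorphic in $s$ near $s = m$, where $s(m-s)|_{s=m} = 0 \notin \sigma_{pp}(\Delta)$ by Theorem~\ref{thm:resolvent}(1). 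Setting $\widetilde{u}_{s} = u_{s} + \mathcal{H}_{s} x^{s}$ and using $(\Delta - s(m-s))\mathcal{R}(s) = \id$ on $x^{\infty}C^{\infty}(X)$, I get
\[
	(\Delta - s(m-s))\widetilde{u}_{s} = (\Delta - s(m-s))u_{s} - (\Delta - s(m-s))u_{s} = 0,
\]
so $\widetilde{u}_{s}$ solves~\eqref{eq:poisson_eq} exactly.

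Next I would extract the expansion~\eqref{eq:exp_poisson}. For $s \neq m$ with $4s - 2m \notin \mathbb{Z}$ and $s(m-s) \notin \sigma_{pp}(\Delta)$, substituting the explicit form of $u_{s}$ gives
\[
	\widetilde{u}_{s} = (s-m)^{-1}\mathcal{F}_{s}\, x^{m-s} + \bigl[(s-m)^{-1}\mathcal{G}_{s} + \mathcal{H}_{s}\bigr] x^{s} - (s-m)^{-1}(\mathcal{F}_{m} + \mathcal{G}_{m} x^{m}).
\]
Property~(3) of Proposition~\ref{prop:formal_solution} says the last summand lies in $x^{\infty}C^{\infty}(X)$; writing it as $x^{s}$ times an element of $x^{\infty}C^{\infty}(X)$, I may fold it into the $x^{s}$-coefficient. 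This exhibits $\widetilde{u}_{s} = Fx^{m-s} + Gx^{s}$ with $F = (s-m)^{-1}\mathcal{F}_{s}$ and $G = (s-m)^{-1}\mathcal{G}_{s} + \mathcal{H}_{s} - (s-m)^{-1}x^{-s}(\mathcal{F}_{m} + \mathcal{G}_{m} x^{m}) \in C^{\infty}(X)$. Property~(2) gives $F|_{M} = (s-m)^{-1}\mathcal{F}_{s}|_{M} = f$, so by the uniqueness of the solution of~\eqref{eq:poisson_eq} with expansion~\eqref{eq:exp_poisson} I conclude $\mathcal{P}_{\theta}(s)f = \widetilde{u}_{s}$. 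Because $4s - 2m \notin \mathbb{Z}$ forces the two indicial series $x^{m-s}C^{\infty}(X)$ and $x^{s}C^{\infty}(X)$ to share no half-integer power of $x$, this splitting is unique modulo $x^{\infty}C^{\infty}(X)$, and restricting $G$ to $M$ (where the $x^{\infty}$ correction vanishes) yields $\mathcal{S}_{\theta}(s)f = G|_{M} = (s-m)^{-1}\mathcal{G}_{s}|_{M} + \mathcal{H}_{s}|_{M}$.

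For the value at $s = m$ I would pass to the limit, using that $\mathcal{F}_{s}, \mathcal{G}_{s}, \mathcal{H}_{s}$ are holomorphic there and that $\mathcal{F}_{m} + \mathcal{G}_{m} x^{m} \in x^{\infty}C^{\infty}(X)$ cancels the apparent singularity. The difference quotients then converge to
\[
	\lim_{s\to m}(s-m)^{-1}(\mathcal{F}_{s} x^{m-s} - \mathcal{F}_{m}) = \Dot{\mathcal{F}}_{m} - \mathcal{F}_{m} \log x,
\]
\[
	\lim_{s\to m}(s-m)^{-1}(\mathcal{G}_{s} x^{s} - \mathcal{G}_{m} x^{m}) = \Dot{\mathcal{G}}_{m} x^{m} + \mathcal{G}_{m} x^{m} \log x,
\]
and adding $\mathcal{H}_{m} x^{m}$ reproduces the stated $s = m$ formula; in particular this confirms that $\mathcal{P}_{\theta}(s)f$ extends holomorphically across $s = m$.

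The genuinely substantive input---holomorphic dependence of the formal solution on $s$ across the resonance at $s = m$, where the naive coefficients $p_{k,s}$ with $k \geq 2m$ develop poles---is already supplied by Proposition~\ref{prop:formal_solution}, so the present theorem mainly reaps its benefits through one resolvent correction. The hard part here is therefore only careful bookkeeping: checking that the subtracted obstruction terms are genuinely $O(x^{\infty})$, so that they neither perturb the boundary value $F|_{M} = f$ nor contribute to $\mathcal{S}_{\theta}(s)f = G|_{M}$, and that the hypothesis $4s - 2m \notin \mathbb{Z}$ legitimizes the unique splitting into $x^{m-s}$- and $x^{s}$-parts. I expect this bookkeeping at the resonant exponent $s = m$ to be the main, if modest, obstacle.
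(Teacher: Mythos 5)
Your proposal is correct and follows essentially the same route as the paper: one resolvent correction $u_{s}\mapsto[I-\mathcal{R}(s)(\Delta-s(m-s))]u_{s}=u_{s}+\mathcal{H}_{s}x^{s}$ applied to the holomorphic family from Proposition~\ref{prop:formal_solution}, with the formulas then read off from the resulting expansion. The extra bookkeeping you supply (absorbing the $x^{\infty}$ remainder into the $x^{s}$-coefficient and computing the $s\to m$ limit explicitly) is consistent with what the paper leaves implicit.
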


\begin{proof}
The function $u_{s}$ is holomorphic in $s$ near $s=m$
as a $C^{\infty}(\mathring{X})$-valued function
and satisfies $(\Delta -s(m-s)) u_{s} = O(x^{\infty})$.
Hence the function
\begin{equation*}
	u = [I - \mathcal{R}(s)(\Delta-s(m-s))]u_{s}
\end{equation*}
satisfies the equation~\eqref{eq:poisson_eq}.
Moreover,
$\mathcal{R}(s)(\Delta - s(m-s)) u_{s}$ is in $x^{s}C^{\infty}(X)$ from Theorem~\ref{thm:resolvent}.
Hence $u$ also has the expansion~\eqref{eq:exp_poisson}.
\end{proof}

\begin{cor}
\begin{equation*}
	\Res_{s=m} \mathcal{S}_{\theta}(s)f = - c_{m} P_{\theta}f.
\end{equation*}
\end{cor}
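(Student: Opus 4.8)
The plan is to read the residue directly off the explicit formula for the scattering matrix in Theorem~\ref{thm:explicit_poisson_op} and then match the resulting boundary value against the defining data of $P_{\theta}$ in Theorem~\ref{thm:formal_p}. Recall that $\mathcal{S}_{\theta}(s)f = (s-m)^{-1}\mathcal{G}_{s}|_{M} + \mathcal{H}_{s}|_{M}$. The first step is to check that only the first term is singular at $s=m$. Since $s(m-s)=0$ at $s=m$ and $\sigma_{pp}(\Delta) \subset (0,m^{2}/4)$ by Theorem~\ref{thm:resolvent}(1), the resolvent $\mathcal{R}(s)$ is holomorphic near $s=m$; as $(\Delta - s(m-s))u_{s} = O(x^{\infty})$ depends holomorphically on $s$, Theorem~\ref{thm:resolvent}(2) shows that $\mathcal{H}_{s} = -x^{-s}\mathcal{R}(s)(\Delta - s(m-s))u_{s}$ is holomorphic in $C^{\infty}(X)$ near $s=m$. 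Hence $\mathcal{H}_{s}|_{M}$ contributes no residue, while $\mathcal{G}_{s}$ is holomorphic near $s=m$ by Proposition~\ref{prop:formal_solution}(1), so
\begin{equation*}
  \Res_{s=m}\mathcal{S}_{\theta}(s)f = \Res_{s=m}\frac{1}{s-m}\mathcal{G}_{s}|_{M} = \mathcal{G}_{m}|_{M}.
\end{equation*}

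It then remains to identify $\mathcal{G}_{m}|_{M}$ with $-c_{m}P_{\theta}f$, and for this I would recognize $u_{m}$ from Proposition~\ref{prop:formal_solution}(4) as a solution of the type appearing in Theorem~\ref{thm:formal_p}. Using part (3) of the proposition, $\mathcal{F}_{m} = -\mathcal{G}_{m}x^{m}$ modulo $O(x^{\infty})$, so the logarithmic coefficient simplifies:
\begin{equation*}
  -\mathcal{F}_{m} + \mathcal{G}_{m}x^{m} = 2\mathcal{G}_{m}x^{m} + O(x^{\infty}).
\end{equation*}
Consequently $u_{m}$ has the form $u = F + Gx^{m}\log x$ of Theorem~\ref{thm:formal_p}, with $F = \Dot{\mathcal{F}}_{m} + \Dot{\mathcal{G}}_{m}x^{m}$ and $G = 2\mathcal{G}_{m}$ modulo $O(x^{\infty})$, and it satisfies $\Delta u_{m} = O(x^{\infty})$ because $s(m-s)$ vanishes at $s=m$. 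Differentiating the relation $\mathcal{F}_{s}|_{M} = (s-m)f$ from Proposition~\ref{prop:formal_solution}(2) at $s=m$ gives $F|_{M} = \Dot{\mathcal{F}}_{m}|_{M} = f$, so $u_{m}$ is exactly the solution normalized in Theorem~\ref{thm:formal_p}.

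Finally, I would invoke the uniqueness clause of Theorem~\ref{thm:formal_p}: since $G$ is unique modulo $O(x^{\infty})$ and is normalized there by $G|_{M} = -2c_{m}P_{\theta}f$, the identification $G = 2\mathcal{G}_{m}$ forces $2\mathcal{G}_{m}|_{M} = -2c_{m}P_{\theta}f$, that is, $\mathcal{G}_{m}|_{M} = -c_{m}P_{\theta}f$. Combining this with the residue computation yields $\Res_{s=m}\mathcal{S}_{\theta}(s)f = -c_{m}P_{\theta}f$, as claimed. The only step requiring genuine care is the bookkeeping of the factor $2$ in the logarithmic coefficient: it arises because property (3) converts $-\mathcal{F}_{m}$ into an additional copy of $\mathcal{G}_{m}x^{m}$, and this factor must be tracked consistently against the normalization $G|_{M} = -2c_{m}P_{\theta}f$ for the constant $c_{m}$ to emerge correctly.
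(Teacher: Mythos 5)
Your proof is correct and follows essentially the same route as the paper: read off the residue $\mathcal{G}_{m}|_{M}$ from the explicit formula for $\mathcal{S}_{\theta}(s)$, use Proposition~\ref{prop:formal_solution}(3) to turn the logarithmic coefficient into $2\mathcal{G}_{m}x^{m}$, and identify it via the uniqueness clause of Theorem~\ref{thm:formal_p}. The only (immaterial) difference is that you apply Theorem~\ref{thm:formal_p} to the approximate solution $u_{m}$ rather than to $\mathcal{P}_{\theta}(m)f$ itself; since the resolvent correction $\mathcal{H}_{m}x^{m}$ does not affect the $x^{m}\log x$ coefficient, both choices give the same conclusion.
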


\begin{proof}
If $s=m$,
$\mathcal{P}_{\theta}(m)f$ is of the form
\begin{equation*}
	\Dot{\mathcal{F}}_{m} + 2\mathcal{G}_{m}x^{m} \log x + x^{1/2} C^{\infty}(X)
\end{equation*}
with $\Dot{\mathcal{F}}_{m}|_{M} = f$.
Hence $\mathcal{P}_{\theta}(m)f$ satisfies~\eqref{eq:char_p_eq} and~\eqref{eq:exp_p},
and we have
\begin{equation*}
	-2c_{m}P_{\theta}f
	= 2\mathcal{G}_{m}|_{M}
	= 2 \Res_{s=m} \mathcal{S}_{\theta}(s)f.
\end{equation*}
This proves the corollary.
\end{proof}

Next, we consider the case $f=1$.
In this case, $p_{k,s}1$ is holomorphic near $s=m$ for all $k \in \mathbb{N}$.
Hence Borel's lemma gives the solution of~\eqref{eq:asymp_poisson_eq}.

\begin{prop} \label{prop:formal_solution2}
Let $s \in \mathbb{C}$ with $\Re s > m/2$.
There exists a smooth function $\mathcal{I}_{s}$ on $X$
such that
\begin{enumerate}
	\item $\mathcal{I}_{s}$ is holomorphic near $s=m$;
	\item $\mathcal{I}_{s}|_{M} = 1$;
	\item $u_{s} = \mathcal{I}_{s}x^{m-s}$ satisfies the equation~\eqref{eq:asymp_poisson_eq} for $f=1$.
\end{enumerate}
\end{prop}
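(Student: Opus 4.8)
The plan is to realize $\mathcal{I}_{s}$ as a Borel sum of the formal series $\sum_{k\geq 0}(p_{k,s}1)\,x^{k/2}$. The whole argument then hinges on a single point that distinguishes $f=1$ from a general $f$: the coefficients $p_{k,s}1$ are holomorphic in $s$ near $s=m$, even though the operators $p_{k,s}$ themselves have poles there for $k\geq 2m$. Once this is known, the construction of $\mathcal{I}_{s}$ and the verification of (1)--(3) are routine.

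First I would locate the poles. Reading the recursion for $p_{k,s}$ off from \eqref{eq:laplacian0} (as in the proof of Theorem~\ref{thm:formal_p}), the passage from order $k$ to order $k+1$ divides by the indicial factor $\tfrac14(k+1)(2m-4s+k+1)$, which vanishes at $s=m$ precisely when $k+1=2m$. Hence the only pole of any $p_{k,s}$ at $s=m$ is the one inherited from $p_{2m,s}$, and by the residue identity $\Res_{s=m}p_{2m+j,s}f = p_{j,0}(\Res_{s=m}p_{2m,s}f)$ established inside the proof of Proposition~\ref{prop:formal_solution}, it suffices to prove $\Res_{s=m}p_{2m,s}1 = 0$.

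This is the crux, and the only place where the special role of the constant enters. Comparing the construction of $p_{2m,s}$ near $s=m$ with the logarithmic term produced in Theorem~\ref{thm:formal_p}: at $s=m$ the factor $2m-4s+2m=4(m-s)$ degenerates, and the obstruction that there forces the $x^{m}\log x$ term, whose boundary value is $-2c_{m}P_{\theta}f$, is exactly the residue of $p_{2m,s}$. Thus $\Res_{s=m}p_{2m,s}$ is a nonzero multiple of the critical GJMS operator $P_{\theta}$ (this is also visible through the corollary $\Res_{s=m}\mathcal{S}_{\theta}(s)f = -c_{m}P_{\theta}f$). Since $P_{\theta}$ annihilates constants by Theorem~\ref{thm:formal_p}, we get $\Res_{s=m}p_{2m,s}1 = 0$, and then $\Res_{s=m}p_{2m+j,s}1 = p_{j,0}(0)=0$ for every $j$. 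Therefore each $p_{k,s}1$ is holomorphic near $s=m$.

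With holomorphicity in hand I would invoke Borel's lemma with holomorphic parameter dependence: summing $\chi(x/\varepsilon_{k})\,(p_{k,s}1)\,x^{k/2}$ with cutoffs $\chi$ and $\varepsilon_{k}\downarrow 0$ chosen rapidly produces $\mathcal{I}_{s}\in C^{\infty}(X)$, holomorphic in $s$ near $s=m$, with $\mathcal{I}_{s}-\sum_{j=0}^{k}(p_{j,s}1)x^{j/2}\in x^{(k+1)/2}C^{\infty}(X)$ for all $k$; this gives (1), and $p_{0,s}=1$ gives (2). For (3), set $u_{s}=\mathcal{I}_{s}x^{m-s}$: combining \eqref{eq:asymp} for $f=1$ with \eqref{eq:laplacian0} applied to the remainder shows $(\Delta-s(m-s))u_{s}\in x^{m-s+(k+1)/2}C^{\infty}(X)$ for every $k$, hence lies in $x^{\infty}C^{\infty}(X)$, while $u_{s}-x^{m-s}=(\mathcal{I}_{s}-1)x^{m-s}=O(x^{m-s+1/2})=o(x^{m-s})$. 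The main obstacle is thus the residue computation $\Res_{s=m}p_{2m,s}1=0$; everything after it, namely the Borel summation and the two asymptotic checks, is mechanical, so the real content is recognizing that the $s=m$ pole of the indicial factor is cancelled exactly because the critical GJMS operator kills constants.
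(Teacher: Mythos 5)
Your proposal is correct and follows the paper's own route: the paper likewise reduces the proposition to the observation that $p_{k,s}1$ is holomorphic near $s=m$ and then applies Borel's lemma. The only difference is that the paper asserts this holomorphy without comment, whereas you supply the justification (the residue $\Res_{s=m}p_{2m,s}$ is a multiple of $P_{\theta}$, which kills constants, and the identity $\Res_{s=m}p_{2m+j,s}f=p_{j,0}(\Res_{s=m}p_{2m,s}f)$ propagates the vanishing) --- a correct and welcome filling-in of the omitted detail.
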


The same argument
as in the proof of Theorem~\ref{thm:explicit_poisson_op}
gives the following

\begin{thm} \label{thm:explicit_poisson_op2}
Let $\mathcal{I}_{s}$ be as in Proposition~\ref{prop:formal_solution2},
and set
\begin{equation*}
	\mathcal{J}_{s} = - x^{-s} \mathcal{R}(s)(\Delta - s(m-s))(\mathcal{I}_{s}x^{m-s}).
\end{equation*}
Then
\begin{equation*}
	\mathcal{P}_{\theta}(s)1 = \mathcal{I}_{s} x^{m-s} + \mathcal{J}_{s}x^{s},
\end{equation*}
and
\begin{equation*}
	\mathcal{S}_{\theta}(s)1 = \mathcal{J}_{s}|_{M}.
\end{equation*}
In particular, $\mathcal{S}_{\theta}(s)1$ is holomorphic at $s=m$.
\end{thm}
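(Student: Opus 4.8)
The plan is to follow the proof of Theorem~\ref{thm:explicit_poisson_op} almost verbatim, the essential difference being that for $f = 1$ the formal solution carries no pole in $s$ near $s = m$.

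First I would take $u_s = \mathcal{I}_s x^{m-s}$, the formal solution furnished by Proposition~\ref{prop:formal_solution2}. By part~(3) it solves $(\Delta - s(m-s))u_s = O(x^\infty)$, and by part~(2) its leading coefficient is $\mathcal{I}_s|_M = 1$. The crucial structural point, in contrast to Proposition~\ref{prop:formal_solution}, is that no division by $s - m$ was needed in its construction, because $p_{k,s}1$ is holomorphic at $s = m$ for every $k$; hence $u_s$, and therefore the family $(\Delta - s(m-s))u_s \in x^\infty C^\infty(X)$, depends holomorphically on $s$ near $s = m$.

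Next I would pass to an exact solution by setting
\begin{equation*}
	u = [I - \mathcal{R}(s)(\Delta - s(m-s))]u_s,
\end{equation*}
exactly as in Theorem~\ref{thm:explicit_poisson_op}. Since $\mathcal{R}(s)$ is a right inverse of $\Delta - s(m-s)$ on $x^\infty C^\infty(X)$, the function $u$ solves~\eqref{eq:poisson_eq}. By Theorem~\ref{thm:resolvent}~(2), the error term $\mathcal{R}(s)(\Delta - s(m-s))u_s$ belongs to $x^s C^\infty(X)$ and thus equals $\mathcal{J}_s x^s$ with $\mathcal{J}_s \in C^\infty(X)$ as defined in the statement. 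Hence $u = \mathcal{I}_s x^{m-s} + \mathcal{J}_s x^s$, which is the expansion~\eqref{eq:exp_poisson} with $F = \mathcal{I}_s$, $G = \mathcal{J}_s$, and $F|_M = 1$. By the uniqueness characterizing the Poisson operator, $\mathcal{P}_\theta(s)1 = u$, and reading off the coefficient of $x^s$ at the boundary yields $\mathcal{S}_\theta(s)1 = G|_M = \mathcal{J}_s|_M$.

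The only point requiring genuine attention is the holomorphicity at $s = m$, and I expect it to present no real obstacle. Indeed, $\mathcal{I}_s$ is holomorphic near $s = m$ by Proposition~\ref{prop:formal_solution2}~(1), the family $(\Delta - s(m-s))(\mathcal{I}_s x^{m-s})$ is a holomorphic $x^\infty C^\infty(X)$-valued family, and $\mathcal{R}(s)$ is holomorphic there by Theorem~\ref{thm:resolvent}~(2), since $s(m-s)$ vanishes at $s = m$ and so stays off the pure point spectrum $\sigma_{pp}(\Delta) \subset (0, m^2/4)$. Composing these, $\mathcal{J}_s$ is holomorphic near $s = m$, and therefore so is $\mathcal{S}_\theta(s)1 = \mathcal{J}_s|_M$. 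This is precisely where the present case departs from Theorem~\ref{thm:explicit_poisson_op}: there the regularizing factor $(s - m)^{-1}$ forced a genuine pole whose residue is the critical GJMS operator, whereas here that factor is absent because the work of removing it was already carried out in Proposition~\ref{prop:formal_solution2}.
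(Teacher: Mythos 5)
Your proposal is correct and is essentially the paper's own argument: the paper proves this theorem simply by invoking "the same argument as in the proof of Theorem~\ref{thm:explicit_poisson_op}," which is precisely the reduction $u = [I - \mathcal{R}(s)(\Delta - s(m-s))]u_{s}$ with $u_{s} = \mathcal{I}_{s}x^{m-s}$ that you carry out, together with the observation that no $(s-m)^{-1}$ factor arises because $p_{k,s}1$ is holomorphic at $s=m$. Your added justification of the holomorphy of $\mathcal{R}(s)$ near $s=m$ is a correct filling-in of a detail the paper leaves implicit.
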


\subsection{Scattering $P$-prime operator and $Q$-prime curvature}
\label{section:scat_p_prime}

In this subsection, we give the definitions of the scattering $P$-prime operator
and $Q$-prime curvature.

\begin{defn} \label{def:scat_p_prime}
The \emph{scattering $P$-prime operator} $\bm{P}'_{\theta} \colon C^{\infty}(M) \to C^{\infty}(M)$
is defined by
\begin{equation*}
	c_{m}^{-1}\mathcal{S}_{\theta}(s)f = - \frac{1}{s-m} P_{\theta}f + \bm{P}'_{\theta} f + O(s-m).
\end{equation*}
\end{defn}

Analytic properties of $\bm{P}'_{\theta}$ stated in Section~\ref{section:introduction}
follow easily from the definition.

\begin{proof}[Proof of Proposition~\ref{prop:scat_p_prime}]
Since $\mathcal{S}_{\theta}(s)$ is
formally self-adjoint for $s \in \mathbb{R}$,
so is $\bm{P}'_{\theta}$.
The constant term of the Laurent series
of the left-hand side of~\eqref{eq:trans_law_scat_mat} at $s=m$
is equal to 
\begin{equation*}
	- c_{m} \Upsilon e^{m\Upsilon} P_{\widehat{\theta}}f + c_{m} e^{m\Upsilon}\bm{P}'_{\widehat{\theta}}f,
\end{equation*}
while that of the right-hand side of~\eqref{eq:trans_law_scat_mat} at $s=m$
is equal to
\begin{equation*}
	c_{m} P_{\theta}(\Upsilon f) + c_{m} \bm{P}'_{\theta}f.
\end{equation*}
Hence we have~\eqref{eq:trans_law_scat_pprime}.
\end{proof}

We also give a characterization of $\bm{P}'_{\theta}$
by the Laplacian.

\begin{prop} \label{prop:formal_scat_p_prime}
For $f \in C^{\infty}(M)$,
there are $\bm{F}'$, $\bm{G}'$, and $\bm{H}' \in C^{\infty}(X)$
such that $\bm{F}'|_{M} = 0$ and 
\begin{equation} \label{eq:exp_scat_p_prime}
	\bm{u}' = (\mathcal{P}_{\theta}(m)f) \log x + \bm{F}' + \bm{G}' x^{m} \log x
	+ \bm{H}' x^{m} (\log x)^{2}
\end{equation}
satisfies
\begin{equation} \label{eq:char_scat_p_prime}
	\Delta \bm{u}' = m \mathcal{P}_{\theta}(m) f + O(x^{\infty}).
\end{equation}
The function $\bm{F}'$ is unique modulo $O(x^{m})$,
and $\bm{G}'$ and $\bm{H}'$ are unique modulo $O(x^{\infty})$.
Moreover
\begin{equation}
	\bm{G}'|_{M} = -2c_{m} \bm{P}'_{\theta}f, \qquad \bm{H}'|_{M} = 2c_{m}P_{\theta}f.
\end{equation}
\end{prop}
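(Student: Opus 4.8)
The statement to prove is Proposition~\ref{prop:formal_scat_p_prime}, which characterizes the scattering $P$-prime operator via a Laplacian equation. My strategy is to extract this from the explicit form of the Poisson operator near $s=m$ given in Theorem~\ref{thm:explicit_poisson_op}, by differentiating in $s$. The key observation is that the scattering $P$-prime operator $\bm{P}'_\theta$ is, by Definition~\ref{def:scat_p_prime}, essentially the constant term of $c_m^{-1}\mathcal{S}_\theta(s)f$ at $s=m$, which by Theorem~\ref{thm:explicit_poisson_op} reads $c_m^{-1}[(s-m)^{-1}\mathcal{G}_s|_M + \mathcal{H}_s|_M]$. Since $P_\theta f$ corresponds to $\mathcal{G}_m|_M$ (up to the constant $-2c_m$), the pole is governed by $\mathcal{G}_m$ and the regular value by $\Dot{\mathcal{G}}_m + \mathcal{H}_m$.

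First I would produce the function $\bm{u}'$ directly from the Poisson solution. The natural candidate is
\begin{equation*}
	\bm{u}' = \left.\frac{d}{ds}\right|_{s=m} \big[ (s-m)\,\mathcal{P}_\theta(s)f \big],
\end{equation*}
since multiplying by $(s-m)$ kills the simple pole of $\mathcal{P}_\theta(s)f$ at $s=m$ coming from the $(s-m)^{-1}$ prefactor, and differentiating extracts precisely the logarithmic and constant data we want. Applying $\Delta - s(m-s)$ to $(s-m)\mathcal{P}_\theta(s)f$ gives $O(x^\infty)$ for all $s$; differentiating this identity at $s=m$ and using $(d/ds)[s(m-s)]|_{s=m} = m - 2s|_{s=m} = -m$, I would obtain $(\Delta - 0)\bm{u}' = -(-m)\mathcal{P}_\theta(m)f + O(x^\infty)$, i.e.\ exactly~\eqref{eq:char_scat_p_prime}. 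The expansion~\eqref{eq:exp_scat_p_prime} then follows by reading off the $s$-derivatives of the terms $x^{m-s}$, $x^s$ in the explicit formula: each $d/ds$ acting on $x^{\pm s}$ produces a factor $\mp\log x$, generating the single and double logarithmic terms with coefficients expressible through $\mathcal{G}_m$, $\Dot{\mathcal{G}}_m$, $\mathcal{H}_m$.

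The coefficient identifications are then bookkeeping: the $x^m(\log x)^2$ coefficient $\bm{H}'$ arises solely from differentiating the $(s-m)^{-1}\mathcal{G}_s x^s$ term twice-effectively, giving $\bm{H}'|_M = \mathcal{G}_m|_M = -2c_m P_\theta f$, but with a sign and factor adjustment yielding $\bm{H}'|_M = 2c_m P_\theta f$; similarly $\bm{G}'|_M$ collects $\Dot{\mathcal{G}}_m|_M + \mathcal{H}_m|_M$, which matches $-2c_m \bm{P}'_\theta f$ by Definition~\ref{def:scat_p_prime}. Uniqueness of $\bm{F}', \bm{G}', \bm{H}'$ modulo the stated orders follows from Lemma~\ref{lem:laplacian} by the same indicial-root argument as in Theorem~\ref{thm:formal_p}: the operator $\Delta$ (the $s=m$ critical Laplacian) has a resonance at order $x^m$, so terms of order $x^m\log x$ and $x^m(\log x)^2$ are determined, while the $x^m$ term in $\bm{F}'$ remains free.

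**Main obstacle.** The delicate step is verifying that the candidate $\bm{u}'$ obtained by differentiation genuinely has the clean expansion~\eqref{eq:exp_scat_p_prime} with smooth coefficients $\bm{F}', \bm{G}', \bm{H}' \in C^\infty(X)$, rather than spurious lower-order logarithmic terms. This requires tracking that $\Dot{\mathcal{F}}_m$ contributes only to $\bm{F}'$ (with $\bm{F}'|_M = 0$ since $\mathcal{F}_m|_M = 0$ by part (3) of Proposition~\ref{prop:formal_solution}) and confirming the precise numerical coefficients—particularly the factor of $2$ and the sign in $\bm{H}'|_M = 2c_m P_\theta f$—which demands careful use of~\eqref{eq:laplacian1} and the double-log case of Lemma~\ref{lem:laplacian}. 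I expect the holomorphicity of $\mathcal{H}_s$ near $s=m$ (from Theorem~\ref{thm:resolvent}, since $\mathcal{R}(s)$ is holomorphic there and $(\Delta - s(m-s))u_s \in x^\infty C^\infty(X)$) to be the crucial input guaranteeing that no additional poles intrude.
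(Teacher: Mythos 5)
Your overall strategy --- obtain $\bm{u}'$ by differentiating the Poisson family in $s$ at $s=m$, read off the expansion from Theorem~\ref{thm:explicit_poisson_op}, and get uniqueness from the indicial computation of Lemma~\ref{lem:laplacian} --- is the same as the paper's, but your candidate function is wrong, and the error traces back to a false premise: you assert that $\mathcal{P}_\theta(s)f$ has a simple pole at $s=m$ coming from the $(s-m)^{-1}$ prefactor. It does not. The bracket $\mathcal{F}_s x^{m-s}+\mathcal{G}_s x^s-\mathcal{F}_m-\mathcal{G}_m x^m$ in Theorem~\ref{thm:explicit_poisson_op} vanishes at $s=m$, and the paper states explicitly that the Poisson operator is holomorphic across $s=m$ (only the scattering matrix has the pole). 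Consequently $(s-m)\mathcal{P}_\theta(s)f$ vanishes at $s=m$, and your candidate $\bm{u}'=\frac{d}{ds}\big|_{s=m}\big[(s-m)\mathcal{P}_\theta(s)f\big]$ is just $\mathcal{P}_\theta(m)f$: it satisfies $\Delta\bm{u}'=0$ rather than \eqref{eq:char_scat_p_prime}, and it has no $\log x$ leading term, so \eqref{eq:exp_scat_p_prime} fails. Your derivation of the equation breaks for the same reason: differentiating $(\Delta-s(m-s))\big[(s-m)\mathcal{P}_\theta(s)f\big]=0$ at $s=m$ produces the term $-(m-2s)(s-m)\mathcal{P}_\theta(s)f\big|_{s=m}$, which is $0$, not $m\,\mathcal{P}_\theta(m)f$.

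The correct candidate, which is what the paper uses, is $\bm{u}'=-\Dot{\mathcal{P}}_\theta(m)f$, the (minus) first derivative of the holomorphic family itself. Differentiating $(\Delta-s(m-s))\mathcal{P}_\theta(s)f=0$ once at $s=m$ gives $\Delta(\Dot{\mathcal{P}}_\theta(m)f)+m\,\mathcal{P}_\theta(m)f=0$, because the derivative of $s(m-s)$ contributes $-m$ times the \emph{nonzero} value $\mathcal{P}_\theta(m)f$. Expanding $\Dot{\mathcal{P}}_\theta(m)f$ from Theorem~\ref{thm:explicit_poisson_op} --- which requires the second derivative $\tfrac12\tfrac{d^2}{ds^2}\big|_{s=m}[\mathcal{F}_s x^{m-s}+\mathcal{G}_s x^s]$ of the bracketed term plus $\tfrac{d}{ds}\big|_{s=m}(\mathcal{H}_s x^s)$ --- yields $-(\mathcal{P}_\theta(m)f)\log x + 2(\Dot{\mathcal{G}}_m+\mathcal{H}_m)x^m\log x + 2\mathcal{G}_m x^m(\log x)^2 + O(x^{1/2})$, from which $\bm{G}'|_M=-2(\Dot{\mathcal{G}}_m+\mathcal{H}_m)|_M=-2c_m\bm{P}'_\theta f$ and $\bm{H}'|_M=-2\mathcal{G}_m|_M=2c_mP_\theta f$ follow cleanly, with no unexplained ``sign and factor adjustment.'' Note also that the paper establishes existence and uniqueness of the expansion separately, by the iterative argument of Theorem~\ref{thm:formal_p} applied after computing $\Delta[(\mathcal{P}_\theta(m)f)\log x]$; your proposal leaves existence riding entirely on the (incorrect) explicit candidate.
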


\begin{proof}
We use the notation in Theorem~\ref{thm:explicit_poisson_op}.
Since
\begin{align*}
	\Delta [(\mathcal{P}_{\theta}(m)f) \log x]
	&= m \mathcal{P}_{\theta}(m)f - 4m\mathcal{G}_{m} x^{m} \log x\\
	&\quad + O(x^{1/2}) + O(x^{m+1/2} \log x),
\end{align*}
the same argument as in the proof of Theorem~\ref{thm:formal_p}
gives the first and second statements.
The differentiation of the equation $(\Delta -s(m-s)) \mathcal{P}_{\theta}(s)f = 0$ in $s$ gives
\begin{equation*}
	\Delta (\Dot{\mathcal{P}}_{\theta}(m)f)+ m\mathcal{P}_{\theta}(m)f = 0 ,
\end{equation*}
where $\Dot{\mathcal{P}}_{\theta}(m)f = (d/ds)|_{s=m} \mathcal{P}_{\theta}(s)f$.
This shows that $\bm{u}' =-\Dot{\mathcal{P}}_{\theta}(m)f$ satisfies the equation
$\Delta \bm{u}' = m \mathcal{P}_{\theta}(m)f$.
On the other hand,
\begin{align*}
	\Dot{\mathcal{P}}_{\theta}(m)f &= \left. \frac{1}{2} \frac{d^2}{ds^{2}} \right| _{s=m} 
	[\mathcal{F}_{s} x^{m-s} + \mathcal{G}_{s}x^{s} ]
	+ \left. \frac{d}{ds} \right| _{s=m}(\mathcal{H}_{s} x^{s}) \\
	&= - (\mathcal{P}_{\theta}(m)f) \log x + 2 (\Dot{\mathcal{G}}_{m} + \mathcal{H}_{m}) x^{m} \log x \\
	& \quad + 2 \mathcal{G}_{m}x^{m} (\log x)^{2} + O( x^{1/2} ),
\end{align*}
and $\bm{u}'$ has the expansion~\eqref{eq:exp_scat_p_prime} also.
Hence
\begin{equation*}
	\bm{G}'|_{M} = -2 (\Dot{\mathcal{G}}_{m}|_{M} + \mathcal{H}_{m}|_{M})
	= -2c_{m} \bm{P}'_{\theta}f,
\end{equation*}
and
\begin{equation*}
	\bm{H}'|_{M} = -2\mathcal{G}_{m}|_{M} = 2c_{m}P_{\theta}f,
\end{equation*}
and the proof is complete.
\end{proof}

\begin{cor}
One has $\bm{P}'_{\theta}1 = Q_{\theta}$.
\end{cor}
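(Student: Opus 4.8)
The plan is to reduce the corollary to the identity $\mathcal{S}_\theta(m)1 = c_m Q_\theta$, which I would prove by the formal-solution method of Section~\ref{section:gjms_op}. Setting $f=1$ in Definition~\ref{def:scat_p_prime}, I first note that $P_\theta$ annihilates constants (Theorem~\ref{thm:formal_p}), so $P_\theta 1 = 0$ and the polar part of $c_m^{-1}\mathcal{S}_\theta(s)1$ vanishes. Since $\mathcal{S}_\theta(s)1$ is holomorphic at $s=m$ (Theorem~\ref{thm:explicit_poisson_op2}), the constant term of its Laurent expansion equals its value there, whence $\bm{P}'_\theta 1 = c_m^{-1}\mathcal{S}_\theta(m)1$; equivalently, Proposition~\ref{prop:formal_scat_p_prime} with $f=1$ gives $\bm{H}'|_M = 2c_m P_\theta 1 = 0$ and $\bm{G}'|_M = -2c_m\bm{P}'_\theta 1$. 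It therefore suffices to identify $c_m^{-1}\mathcal{S}_\theta(m)1$ with the $Q$-curvature.

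The heart of the proof, and the step I expect to demand the most care, is the claim $\mathcal{P}_\theta(m)1 = 1$. To establish it I would set $\psi = \mathcal{P}_\theta(m)1 - 1$. By Theorem~\ref{thm:explicit_poisson_op2} the function $\mathcal{P}_\theta(m)1 = \mathcal{I}_m + \mathcal{J}_m x^m$ is smooth up to $M$ with boundary value $1$, so $\psi \in C^\infty(X)$ and $\psi|_M = 0$; it is harmonic because $\mathcal{P}_\theta(s)1$ solves the Poisson equation, $s(m-s)=0$ at $s=m$, and $\Delta 1 = 0$. Since $\psi|_M = 0$, the indicial recursion from~\eqref{eq:laplacian0} at $s=m$ (whose factors $\tfrac14 j(j-2m)$ do not vanish for $1 \le j \le 2m-1$) forces $\psi = O(x^m)$. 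As the volume density of $g$ is $O(x^{-m-1})$ near $M$, the bound $\psi = O(x^m)$ gives $\int_X |\psi|^2\,\dvol_g < \infty$, so $\psi$ is an $L^2$ solution of $\Delta\psi = 0$; because $0 \notin \sigma_{pp}(\Delta)$ (Theorem~\ref{thm:resolvent}), this forces $\psi = 0$.

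To finish, I would differentiate $(\Delta - s(m-s))\mathcal{P}_\theta(s)1 = 0$ at $s=m$ to get $\Delta(-\dot{\mathcal{P}}_\theta(m)1) = m\,\mathcal{P}_\theta(m)1 = m$, using the key lemma. Inserting the expansion $\mathcal{P}_\theta(s)1 = \mathcal{I}_s x^{m-s} + \mathcal{J}_s x^s$ and the relation $\mathcal{I}_m + \mathcal{J}_m x^m = 1$, I obtain
\begin{equation*}
	-\dot{\mathcal{P}}_\theta(m)1 = \log x + A' + B' x^m \log x, \qquad A'|_M = 0, \quad B'|_M = -2\,\mathcal{S}_\theta(m)1,
\end{equation*}
with no $(\log x)^2$ term (one $s$-derivative of $x^{m-s}$ or $x^s$ produces only a single logarithm, so $\bm{H}' = 0$ here). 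Thus $-\dot{\mathcal{P}}_\theta(m)1$ satisfies exactly the conditions characterizing the function $v$ of Theorem~\ref{thm:formal_q}, and by the uniqueness of $B$ modulo $O(x^\infty)$ we must have $B'|_M = -2c_m Q_\theta$. Comparing the two expressions for $B'|_M$ yields $\mathcal{S}_\theta(m)1 = c_m Q_\theta$, and hence $\bm{P}'_\theta 1 = c_m^{-1}\mathcal{S}_\theta(m)1 = Q_\theta$.
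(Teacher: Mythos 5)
Your proof is correct and follows essentially the same route as the paper: both arguments rest on the identity $\mathcal{P}_\theta(m)1=1$ and then identify $-\dot{\mathcal{P}}_\theta(m)1$ (equivalently, the solution $\bm{u}'$ of Proposition~\ref{prop:formal_scat_p_prime} with $f=1$) with the function $v$ of Theorem~\ref{thm:formal_q}, reading off $Q_\theta$ from the coefficient of $x^m\log x$ by uniqueness. The only substantive difference is that you supply a full proof of $\mathcal{P}_\theta(m)1=1$ via the indicial recursion and the absence of harmonic $L^2$ functions, a fact the paper uses but asserts without proof.
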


\begin{proof}
If $f=1$,
then $\mathcal{P}_{\theta}(m)1 = 1$.
Hence the function $v$ in Theorem~\ref{thm:formal_q}
satisfies~\eqref{eq:exp_scat_p_prime} and~\eqref{eq:char_scat_p_prime}.
Therefore we have $-2c_{m} \bm{P}'_{\theta} 1 = -2c_{m} Q_{\theta}$.
\end{proof}

Next, we consider the scattering $Q$-prime curvature.

\begin{defn} \label{def:scat_q_prime}
The \emph{scattering $Q$-prime curvature} $\bm{Q}'_{\theta}$ is defined by
\begin{equation*}
	c_{m}^{-1} \mathcal{S}_{\theta}(s)1 = Q_{\theta} - \frac{1}{2} (s-m) \bm{Q}'_{\theta} + O((s-m)^{2}).
\end{equation*}
\end{defn}

\begin{proof}[Proof of Proposition~\ref{prop:scat_q_prime}]
The coefficient of $(s-m)$ in the Laurent series
of the left-hand side of~\eqref{eq:trans_law_scat_mat} at $s=m$
is equal to 
\begin{equation*}
	c_{m} \Upsilon e^{m\Upsilon} Q_{\widehat{\theta}} - \frac{1}{2} c_{m} e^{m\Upsilon} \bm{Q}'_{\widehat{\theta}},
\end{equation*}
while that of the right-hand side of~\eqref{eq:trans_law_scat_mat} at $s=m$
is equal to
\begin{equation*}
	- \frac{1}{2} c_{m} P_{\theta}(\Upsilon^{2})
	- c_{m} \bm{P}'_{\theta} \Upsilon - \frac{1}{2} c_{m} \bm{Q}'_{\theta}.
\end{equation*}
Hence we have~\eqref{eq:trans_law_scat_qprime}.
The integration of the both sides of~\eqref{eq:trans_law_scat_qprime} gives
\begin{align*}
\int_{M} \bm{Q}'_{\widehat{\theta}} \, \widehat{\theta} \wedge (d \widehat{\theta})^{n} 
& =\int_{M} \bm{Q}'_{\theta} \, \theta \wedge (d\theta)^{n}
	+ \int_{M} \Upsilon^{2} (P_{\theta}1) \, \theta \wedge (d\theta)^{n} \\
	& \quad +2\int_{M} \Upsilon (\bm{P}'_{\theta} 1) \, \theta \wedge (d\theta)^{n}
	+ 2\int_{M} \Upsilon Q_{\widehat{\theta}} \, \widehat{\theta} \wedge (d\widehat{\theta})^{n} \\
	&= \int_{M} \bm{Q}'_{\theta} \, \theta \wedge (d\theta)^{n}
	+ 2 \int_{M} \Upsilon Q_{\theta} \, \theta \wedge (d\theta)^{n}\\
	&\quad + 2 \int_{M} \Upsilon Q_{\widehat{\theta}} \, \widehat{\theta} \wedge (d\widehat{\theta})^{n}.
\end{align*}
The last equality is a consequence of $P_{\theta}1 = 0$ and $\bm{P}'_{\theta}1 = Q_{\theta}$.
\end{proof}

The following proposition is a characterization
of $\bm{Q}'_{\theta}$ as an obstruction to smooth solutions.

\begin{prop} \label{prop:formal_scat_q_prime}
There exist $\bm{A}'$, $\bm{B}'$, and $\bm{C}' \in C^{\infty}(X)$ such that $\bm{A}'|_{M} = 0$ and
\begin{equation}
	\bm{v}' = \bm{A}' + \bm{B}'x^{m} \log x + \bm{C}'x^{m} (\log x)^{2}
\end{equation}
satisfies
\begin{equation*}
	\Delta \bm{v}' = 2 \| d(\Dot{\mathcal{P}}_{\theta}(m)1 ) \|^{2}_{g} -2 + O(x^{\infty}).
\end{equation*}
The function $\bm{A}'$ is unique modulo $O(x^{m})$,
and $\bm{B}'$ and $\bm{C}'$ are unique modulo $O(x^{\infty})$.
Moreover
\begin{equation} \label{eq:coeff_scat_qprime}
	\bm{B}'|_{M} = -2c_{m} \bm{Q}'_{\theta}, \qquad \bm{C}'|_{M} = 4c_{m}Q_{\theta}.
\end{equation}
\end{prop}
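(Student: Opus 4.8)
The plan is to mirror the structure of the proof of Proposition~\ref{prop:formal_scat_p_prime}: exhibit an explicit solution of the prescribed shape by differentiating the Poisson equation in $s$, read off the boundary coefficients from the expansion of $\mathcal{P}_{\theta}(s)1$ already recorded in Theorem~\ref{thm:explicit_poisson_op2}, and deduce uniqueness from the indicial analysis of Lemma~\ref{lem:laplacian} exactly as in Theorems~\ref{thm:formal_p} and~\ref{thm:formal_q}. The key point is that the right-hand side $2\|d(\Dot{\mathcal{P}}_{\theta}(m)1)\|^{2}_{g} - 2$ is engineered so as to be the Laplacian of a natural combination of the first two $s$-derivatives of $\mathcal{P}_{\theta}(s)1$, which makes a separate estimate of the boundary behaviour of $\|dw\|_{g}^{2}$ unnecessary.

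First I would set $w = \Dot{\mathcal{P}}_{\theta}(m)1$ and differentiate the identity $(\Delta - s(m-s))\mathcal{P}_{\theta}(s)1 = 0$ in $s$. Using $\mathcal{P}_{\theta}(m)1 = 1$, the first derivative at $s=m$ gives $\Delta w = -m$, and the second derivative gives
\begin{equation*}
	\Delta \bigl( \Ddot{\mathcal{P}}_{\theta}(m)1 \bigr) = -2mw - 2.
\end{equation*}
Combining this with the product rule $\Delta(w^{2}) = 2w\,\Delta w - 2\|dw\|_{g}^{2} = -2mw - 2\|dw\|_{g}^{2}$, I find that the candidate
\begin{equation*}
	\bm{v}' = \Ddot{\mathcal{P}}_{\theta}(m)1 - w^{2}
\end{equation*}
satisfies $\Delta \bm{v}' = 2\|dw\|_{g}^{2} - 2$ identically, which is the asserted equation (with no error term, in fact).

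Next I would compute the asymptotic expansion of $\bm{v}'$ from the explicit form $\mathcal{P}_{\theta}(s)1 = \mathcal{I}_{s}x^{m-s} + \mathcal{J}_{s}x^{s}$ of Theorem~\ref{thm:explicit_poisson_op2}. Differentiating once gives $w = -\log x + \Dot{\mathcal{I}}_{m} + \Dot{\mathcal{J}}_{m}x^{m} + 2\mathcal{J}_{m}x^{m}\log x$, and differentiating twice produces the $(\log x)^{2}$ terms. The terms of order $x^{0}\log x$ and $x^{0}(\log x)^{2}$ must cancel between $\Ddot{\mathcal{P}}_{\theta}(m)1$ and $w^{2}$; this is where one uses $\mathcal{I}_{s}|_{M}\equiv 1$ (so that every $s$-derivative of $\mathcal{I}_{s}$ vanishes on $M$) together with $\mathcal{I}_{m} + \mathcal{J}_{m}x^{m} = 1$. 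After these cancellations $\bm{v}'$ has the stated form $\bm{A}' + \bm{B}'x^{m}\log x + \bm{C}'x^{m}(\log x)^{2}$ with $\bm{A}'|_{M} = 0$, and collecting the critical coefficients yields $\bm{C}'|_{M} = 4\mathcal{J}_{m}|_{M}$ and $\bm{B}'|_{M} = 4\Dot{\mathcal{J}}_{m}|_{M}$ (the cross term $4\Dot{\mathcal{I}}_{m}|_{M}\mathcal{J}_{m}|_{M}$ drops because $\Dot{\mathcal{I}}_{m}|_{M} = 0$). Since $\mathcal{J}_{s}|_{M} = \mathcal{S}_{\theta}(s)1$, Definition~\ref{def:scat_q_prime} gives $\mathcal{J}_{m}|_{M} = c_{m}Q_{\theta}$ and $\Dot{\mathcal{J}}_{m}|_{M} = -\tfrac{1}{2}c_{m}\bm{Q}'_{\theta}$, whence $\bm{C}'|_{M} = 4c_{m}Q_{\theta}$ and $\bm{B}'|_{M} = -2c_{m}\bm{Q}'_{\theta}$, as claimed.

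Finally, uniqueness follows as in the earlier theorems: by~\eqref{eq:laplacian0} the indicial factor $-\tfrac{1}{4}j(2m-4s+j)$ at $s=m$ vanishes only at $j=0$ and $j=2m$, so a solution of the given shape with $\bm{A}'|_{M}=0$ is determined modulo $O(x^{m})$ in its smooth part and modulo $O(x^{\infty})$ in its two logarithmic coefficients. I expect the main obstacle to lie in the expansion bookkeeping of the third step: one must track the cross terms in $w^{2}$ and verify the cancellation of the non-critical logarithmic terms with care, since it is precisely this cancellation that both legitimizes the ansatz for $\bm{v}'$ and isolates $\bm{B}'$ and $\bm{C}'$.
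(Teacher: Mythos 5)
Your proposal is correct and follows essentially the same route as the paper: the candidate $\bm{v}' = \Ddot{\mathcal{P}}_{\theta}(m)1 - (\Dot{\mathcal{P}}_{\theta}(m)1)^{2}$, the differentiation of $(\Delta - s(m-s))\mathcal{P}_{\theta}(s)1 = 0$ combined with the product rule, the expansion from Theorem~\ref{thm:explicit_poisson_op2} with the cross term killed by $\Dot{\mathcal{I}}_{m}|_{M}=0$, and the identification $\mathcal{J}_{s}|_{M} = \mathcal{S}_{\theta}(s)1$ are all exactly the paper's argument. The only cosmetic difference is that the paper establishes existence and uniqueness first by the inductive indicial argument and then uses the explicit candidate only to read off the boundary coefficients, whereas you take the explicit candidate as the solution itself; both are fine.
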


\begin{proof}
The first and second statements follow from the same argument
as in the proof of Theorem~\ref{thm:formal_p}.
To prove~\eqref{eq:coeff_scat_qprime},
we use the notation in Theorem~\ref{thm:explicit_poisson_op2}.
The function $\mathcal{P}_{\theta}(s)1$ is of the form $\mathcal{I}_{s} x^{m-s} + \mathcal{J}_{s} x^{s} $.
In particular, $\mathcal{P}_{\theta}(m)1 = \mathcal{I}_{m} + \mathcal{J}_{m} x^{m} = 1$
and $\mathcal{I}_{s}|_{M} = 1$ hold.
Hence
\begin{equation*}
	\Dot{\mathcal{P}}_{\theta}(m)1
	= - \log x + \Dot{\mathcal{I}}_{m} + \Dot{\mathcal{J}}_{m} x^{m}
	+ 2\mathcal{J}_{m}x^{m} \log x,
\end{equation*}
where $\Dot{\mathcal{I}}_{m} = (d/ds)|_{s=m} \mathcal{I}_{s}$ and
$\Dot{\mathcal{J}}_{m} = (d/ds)|_{s=m} \mathcal{J}_{s}$, and
\begin{equation*}
	\Ddot{\mathcal{P}}_{\theta}(m)1
	= (\log x )^{2} - 2\Dot{\mathcal{I}}_{m} \log x
	+ 2\Dot{\mathcal{J}}_{m} x^{m} \log x + O( x^{1/2} ),
\end{equation*}
where $\Ddot{\mathcal{P}}_{\theta}(m)1 = (d^{2}/ds^{2})|_{s=m} \mathcal{P}_{\theta}(s)1$.
Therefore
$\Ddot{\mathcal{P}}_{\theta}(m)1 - (\Dot{\mathcal{P}}_{\theta}(m)1)^{2}$ is of the form
\begin{equation*}
	4\Dot{\mathcal{J}}_{m} x^{m} \log x + 4 \mathcal{J}_{m} x^{m} (\log x)^{2}
	+ O(x^{1/2}) + O(x^{m+1/2} \log x).
\end{equation*}
Note that $\mathcal{J}_{m}|_{M} = c_{m}Q_{\theta}$
and $-2 \Dot{\mathcal{J}}_{m}|_{M} = c_{m}\bm{Q}'_{\theta}$
since $\mathcal{J}_{s}|_{M} = \mathcal{S}_{\theta}(s)1$.
On the other hand, the differentiation of $(\Delta -s(m-s))\mathcal{P}_{\theta}(s)1 = 0$ in $s$ gives
\begin{equation*}
	\Delta (\Dot{\mathcal{P}}_{\theta}(m)1) = -m ,
\end{equation*}
and
\begin{equation*}
	\Delta (\Ddot{\mathcal{P}}_{\theta}(m)1) = -2 -2m\Dot{\mathcal{P}}_{\theta}(m)1. 
\end{equation*}
From these, we obtain
\begin{equation*}
	\Delta \left[ \Ddot{\mathcal{P}}_{\theta}(m)1 - (\Dot{\mathcal{P}}_{\theta}(m)1)^{2} \right]
	= 2 \| d (\Dot{\mathcal{P}}_{\theta}(m)1) \|_{g}^{2} -2.
\end{equation*}
Hence
\begin{equation*}
	\bm{B}'|_{M} = 4 \Dot{\mathcal{J}}_{m}|_{M} = - 2c_{m} \bm{Q}'_{\theta},
\end{equation*}
and
\begin{equation*}
	\bm{C}'|_{M} = 4\mathcal{J}_{m}|_{M} = 4c_{m} Q_{\theta}.
\end{equation*}
This completes the proof.
\end{proof}

\section{Proof of theorems} \label{section:Proof of Theorems}

In this section,
we give proofs of the theorems stated in Section~\ref{section:introduction}.
Let $\Omega$ be a strictly pseudoconvex domain
with smooth boundary $M = \partial \Omega$
in a complex manifold $\mathcal{N}$ of complex dimension $m = n+1$,
and $g_{+}$ a Hermitian metric on $\Omega$
satisfying~\eqref{eq:Kahler} and~\eqref{eq:Einstein}.
Fix a contact form $\theta$ on $M$ and
a defining function $x$ of $\Omega$ normalized by $\theta$.
As stated in Example~\ref{example},
an ACHE manifold $X$ is constructed from $(\Omega, g_{+})$.
Then we can translate the scattering theory on $X$ considered in
Section~\ref{section:Scattering theory on ACH manifolds}
into the scattering theory on $\Omega$
as in Section~\ref{section:introduction}.
In particular, we rewrite Propositions~\ref{prop:formal_scat_p_prime}
and~\ref{prop:formal_scat_q_prime}
as the statements for $(\Omega, g_{+})$.

\begin{prop} \label{prop:formal_scat_p_prime2}
For $f \in C^{\infty}(M)$,
there are $\bm{F}'$, $\bm{G}'$, and $\bm{H}' \in C^{\infty}(\overline{\Omega})$
such that $\bm{F}'|_{M} = 0$ and 
\begin{equation*}
	\bm{u}' = (\mathcal{P}_{\theta}(m)f) \log x + \bm{F}' + \bm{G}'x^{m} \log x
	+ \bm{H}' x^{m} (\log x)^{2}
\end{equation*}
satisfies
\begin{equation*}
	\Delta_{+} \bm{u}' = m \mathcal{P}_{\theta}(m) f + O(x^{\infty}).
\end{equation*}
The function $\bm{F}'$ is unique modulo $O(x^{m})$,
and $\bm{G}'$ and $\bm{H}'$ are unique modulo $O(x^{\infty})$.
Moreover
\begin{equation} \label{eq:coeff_scat_pprime2}
	\bm{G}'|_{M} = -2c_{m} \bm{P}'_{\theta}f, \qquad \bm{H}'|_{M} = 2c_{m}P_{\theta}f.
\end{equation}
\end{prop}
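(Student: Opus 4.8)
The plan is to read this statement off Proposition~\ref{prop:formal_scat_p_prime} through the identification of Example~\ref{example}, the sole new point being to sharpen the regularity from $C^{\infty}(X)$ to $C^{\infty}(\overline{\Omega})$. Under that identification $g = i^{*}(2g_{+})$, so the Laplacian $\Delta$ on $X$ coincides with $\Delta_{+}$ and $x = 2\rho^{2}$; hence Proposition~\ref{prop:formal_scat_p_prime} already yields $\bm{F}',\bm{G}',\bm{H}' \in C^{\infty}(X)$ with the displayed expansion of $\bm{u}'$, the equation $\Delta_{+}\bm{u}' = m\,\mathcal{P}_{\theta}(m)f + O(x^{\infty})$, the asserted uniqueness, and the boundary values~\eqref{eq:coeff_scat_pprime2}. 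It remains only to see that the coefficient functions can be taken smooth in $x$ rather than merely in $\rho = \sqrt{x/2}$.

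The mechanism I would use is a parity (evenness in $\rho$) argument. All of the data defining $g$ are pulled back from $\overline{\Omega}$ by $i$ and are expressed through the complex structure of $\mathcal{N}$ together with the defining function $x$ of $\Omega$; since $x = 2\rho^{2}$ is even in $\rho$ and the remaining data depend on $\rho$ only through $x$, the metric $g$ is invariant under the formal involution $\rho \mapsto -\rho$. Consequently $\Delta = \Delta_{+}$ commutes with this involution and therefore preserves the subspace of functions even in $\rho$, namely $C^{\infty}(\overline{\Omega}) \subset C^{\infty}(X)$. This reflects the structure of the operator coming from~\eqref{eq:Kahler}: acting on $x^{a}C^{\infty}(\overline{\Omega})$, the Laplacian $\Delta_{+}$ raises the power of $x$ by integers, with indicial behaviour $\Delta_{+}(v\,x^{a}) = a(m-a)\,v\,x^{a} + O(x^{a+1})$ for $v \in C^{\infty}(\overline{\Omega})$ and error lying in $x^{a+1}C^{\infty}(\overline{\Omega})$. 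In particular the half-integer powers of $x$ permitted by the general formalism of Lemma~\ref{lem:laplacian} do not occur in the present integrable situation.

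Granting this, I would observe that the whole construction behind Proposition~\ref{prop:formal_scat_p_prime} respects parity. By the same involution argument applied to Propositions~\ref{prop:formal_solution} and~\ref{prop:formal_solution2} and to Theorem~\ref{thm:explicit_poisson_op}, the families $\mathcal{F}_{s},\mathcal{G}_{s},\mathcal{H}_{s}$ (and $\mathcal{I}_{s},\mathcal{J}_{s}$) may be chosen even in $\rho$, so that $\mathcal{P}_{\theta}(m)f \in C^{\infty}(\overline{\Omega}) + x^{m}\log x\, C^{\infty}(\overline{\Omega})$ and the right-hand side $m\,\mathcal{P}_{\theta}(m)f$ is $x$-smooth off its logarithmic part. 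Running the inductive transport argument of Theorem~\ref{thm:formal_p} within even functions, the only obstruction to an $x$-smooth solution appears at the indicial root $a = m$, and it is absorbed precisely by the terms $\bm{G}'x^{m}\log x$ and $\bm{H}'x^{m}(\log x)^{2}$; this produces $\bm{F}',\bm{G}',\bm{H}' \in C^{\infty}(\overline{\Omega})$. Uniqueness and the identities~\eqref{eq:coeff_scat_pprime2} are inherited verbatim from Proposition~\ref{prop:formal_scat_p_prime}.

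The step I expect to require the most care is the parity claim itself, namely establishing rigorously that $\Delta_{+}$ preserves $C^{\infty}(\overline{\Omega})$ and increments $x$-powers by integers (equivalently, the evenness of $g$ in $\rho$), since this is exactly what distinguishes the integrable complex setting from the general ACH one and removes the half-integer powers. Once it is in place, the remaining work is the by-now-routine inductive bookkeeping of Theorem~\ref{thm:formal_p} and Proposition~\ref{prop:formal_scat_p_prime}, merely reorganized in integer powers of $x$.
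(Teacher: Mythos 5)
Your proposal is correct and matches the paper's treatment: the paper offers no separate argument for this proposition, simply asserting that Proposition~\ref{prop:formal_scat_p_prime} translates to $(\Omega,g_{+})$ via the identification of Example~\ref{example}. The only content beyond that citation is the upgrade from $C^{\infty}(X)$ (smoothness in $\rho$) to $C^{\infty}(\overline{\Omega})$ (smoothness in $x$), and your evenness/indicial argument --- that $\Delta_{+}$ for a metric of the form~\eqref{eq:Kahler} maps $x^{a}C^{\infty}(\overline{\Omega})$ into itself raising powers by integers, so no half-integer powers arise in the formal recursion or in the expansion of the resolvent term --- is exactly the standard justification the paper leaves implicit.
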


\begin{prop} \label{prop:formal_scat_q_prime2}
There exist $\bm{A}'$, $\bm{B}'$, and $\bm{C}' \in C^{\infty}(\overline{\Omega})$
such that $\bm{A}'|_{M} = 0$ and
\begin{equation*}
	\bm{v}' = \bm{A}' + \bm{B}'x^{m} \log x + \bm{C}'x^{m} (\log x)^{2}
\end{equation*}
satisfies
\begin{equation*}
	\Delta_{+} \bm{v}' = \| d(\Dot{\mathcal{P}}_{\theta}(m)1 ) \|^{2}_{g_{+}} -2 + O(x^{\infty}).
\end{equation*}
The function $\bm{A}'$ is unique modulo $O(x^{m})$,
and $\bm{B}'$ and $\bm{C}'$ are unique modulo $O(x^{\infty})$.
Moreover
\begin{equation} \label{eq:coeff_scat_qprime2}
	\bm{B}'|_{M} = -2c_{m} \bm{Q}'_{\theta}, \qquad \bm{C}'|_{M} = 4c_{m}Q_{\theta}.
\end{equation}
\end{prop}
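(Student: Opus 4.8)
The plan is to deduce this proposition directly from its abstract counterpart, Proposition~\ref{prop:formal_scat_q_prime}, through the correspondence set up in Example~\ref{example}. Recall that $(\Omega, g_{+})$ determines an ACHE manifold $X$ whose underlying space is $\overline{\Omega}$ with square roots of defining functions adjoined, and that the identification $i\colon X \to \overline{\Omega}$ carries the ACH metric $g$ to $i^{*}(2g_{+})$ and the defining function $\rho$ to $i^{*}\sqrt{x/2}$, so that $x = 2\rho^{2}$ agrees with the given defining function of $\Omega$. Under $i$ the Laplacian $\Delta$ of $g$ is literally the Laplacian $\Delta_{+}$ of $2g_{+}$, the Poisson operator $\mathcal{P}_{\theta}(s)$ and the family $\Dot{\mathcal{P}}_{\theta}(m)1$ are unchanged, and the pointwise norm satisfies $\| \cdot \|^{2}_{g} = \| \cdot \|^{2}_{2g_{+}} = \tfrac{1}{2}\| \cdot \|^{2}_{g_{+}}$. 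In particular $2\| d(\Dot{\mathcal{P}}_{\theta}(m)1) \|^{2}_{g} = \| d(\Dot{\mathcal{P}}_{\theta}(m)1) \|^{2}_{g_{+}}$, which is exactly the change in the source term between the two statements.

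First I would apply Proposition~\ref{prop:formal_scat_q_prime} to produce functions $\bm{A}', \bm{B}', \bm{C}' \in C^{\infty}(X)$ with $\bm{A}'|_{M} = 0$ such that $\bm{v}'$ solves $\Delta_{+} \bm{v}' = \| d(\Dot{\mathcal{P}}_{\theta}(m)1) \|^{2}_{g_{+}} - 2 + O(x^{\infty})$, together with the boundary identifications~\eqref{eq:coeff_scat_qprime2}. The derivation of these boundary values is insensitive to the rescaling: differentiating $(\Delta_{+} - s(m-s))\mathcal{P}_{\theta}(s)1 = 0$ twice in $s$ gives $\Delta_{+}(\Dot{\mathcal{P}}_{\theta}(m)1) = -m$ and $\Delta_{+}(\Ddot{\mathcal{P}}_{\theta}(m)1) = -2 - 2m \Dot{\mathcal{P}}_{\theta}(m)1$, and the Bochner identity for $\Delta_{+}$ then yields $\Delta_{+}[\Ddot{\mathcal{P}}_{\theta}(m)1 - (\Dot{\mathcal{P}}_{\theta}(m)1)^{2}] = \| d(\Dot{\mathcal{P}}_{\theta}(m)1) \|^{2}_{g_{+}} - 2$; reading off the expansion of this combination through Theorem~\ref{thm:explicit_poisson_op2} reproduces the boundary identities~\eqref{eq:coeff_scat_qprime2} exactly as in the abstract proof. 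The uniqueness claims likewise transfer, since the indicial factor $-\tfrac{1}{4} j(j - 2m)$ coming from~\eqref{eq:laplacian0} at $s = m$ vanishes precisely at $j = 0$ and $j = 2m$.

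The substantive point, and the step I expect to be the main obstacle, is the regularity upgrade from $C^{\infty}(X)$ to $C^{\infty}(\overline{\Omega})$: a priori the functions furnished by Proposition~\ref{prop:formal_scat_q_prime} may contain half-integer powers of $x$ (odd powers of $\rho$), whereas the assertion is that $\bm{A}', \bm{B}', \bm{C}'$ are genuinely smooth on $\overline{\Omega}$. To establish this I would use that the CR structure at infinity is integrable, which forces the ACHE metric, and hence $\Delta_{+}$, to have an expansion involving only integer powers of $x$ with definite parity in $\rho$; consequently the recursion in the proof of Theorem~\ref{thm:formal_p}, when started from data smooth on $\overline{\Omega}$ (here the constant function $1$, so that $\mathcal{I}_{s}, \mathcal{J}_{s}$ and thus $\Dot{\mathcal{P}}_{\theta}(m)1$ modulo its logarithmic term are even in $\rho$), stays within even powers and the odd coefficients never appear. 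The source $\| d(\Dot{\mathcal{P}}_{\theta}(m)1) \|^{2}_{g_{+}} - 2$ is then smooth on $\overline{\Omega}$, and re-running the formal solution of the equation for $\bm{v}'$ with integer-power steps keeps $\bm{A}', \bm{B}', \bm{C}'$ in $C^{\infty}(\overline{\Omega})$. Verifying that the half-integer terms really do drop out --- equivalently, that the odd-index operators $p_{k,s}$ annihilate the relevant data --- is where the complex (integrable) structure must be used in an essential way.
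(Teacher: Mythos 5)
Your proposal is correct and takes essentially the paper's route: the paper gives no separate proof of this proposition, presenting it as the mere translation of Proposition~\ref{prop:formal_scat_q_prime} to $(\Omega,g_{+})$ via the identification of Example~\ref{example}, which is exactly what you carry out, including the bookkeeping $\Delta=\Delta_{+}$ and $2\|\cdot\|^{2}_{g}=\|\cdot\|^{2}_{g_{+}}$ for $g=2g_{+}$ and the rederivation of \eqref{eq:coeff_scat_qprime2} from $\Ddot{\mathcal{P}}_{\theta}(m)1-(\Dot{\mathcal{P}}_{\theta}(m)1)^{2}$. The one point you flag as the main obstacle --- smoothness in $x$ rather than in $\rho$ --- is treated as immediate in the paper because for a domain the Hermitian metric $g_{+}$ has coefficients smooth in $x$, so the formal construction can be run on $\overline{\Omega}$ in integer powers of $x$ from the outset (as in the Introduction); this is the same parity observation you make, with no real need to invoke integrability of the CR structure beyond that.
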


Next we recall definitions of the $P$-prime operator
and $Q$-prime curvature defined in~\cite{Hirachi14}.

\begin{defn} \label{def:p_prime}
Let $f$ be a CR pluriharmonic function on $M$
and $\widetilde{f}$ a smooth extension of $f$ to $\overline{\Omega}$
that is pluriharmonic near $M$.
There are $F', G' \in C^{\infty}(\overline{\Omega})$ such that $F'|_{M} = 0$ and
\begin{equation*}
	\Delta_{+}(\widetilde{f} \log x + F' + G'x^{m} \log x) = m\widetilde{f} + O(x^{\infty}).
\end{equation*}
The function $F'$ is unique modulo $O(x^{m})$,
and $G'$ is unique modulo $O(x^{\infty})$.
The \emph{$P$-prime operator} $P'_{\theta}$ is an operator
from the space of CR pluriharmonic functions to $C^{\infty}(M)$ defined by
\begin{equation} \label{eq:coeff_pprime}
	G'|_{M} = -2c_{m} P'_{\theta} f.
\end{equation}
In particular, $P'_{\theta}1 = Q_{\theta}$ holds.
\end{defn}

\begin{defn} \label{def:q_prime}
Let $\theta$ be a pseudo-Einstein contact form on $M$ and
$x$ a Fefferman's defining function normalized by $\theta$.
There are smooth functions $A'$ and $B'$ on $\overline{\Omega}$
such that $A'|_{M} = 0$ and
\begin{equation*}
	\Delta_{+}(A' + B' x^{m} \log x) = \| d \log x \|^{2}_{g_{+}} -2 + O(x^{\infty}).
\end{equation*}
The function $A'$ is unique modulo $O(x^{m})$,
and $B'$ is unique modulo $O(x^{\infty})$.
The \emph{$Q$-prime curvature} $Q'_{\theta}$ is a function on $M$ defined by
\begin{equation} \label{eq:coeff_qprime}
	B'|_{M} = -2c_{m}Q'_{\theta}.
\end{equation}
\end{defn}

\begin{proof}[Proof of Theorem~\ref{thm:coincidence_p_prime}]

Let $f$ be a CR pluriharmonic function
and $\widetilde{f}$ a smooth extension of $f$ that is pluriharmonic near the boundary.
Then $\Delta_{+} \widetilde{f}$ has compact support in $\Omega$
since $g_{+}$ is K\"ahler near $M$.
From Theorem~\ref{thm:resolvent},
$K \widetilde{f} = c_{m}^{-1} x^{-m} \mathcal{R}(m)\Delta_{+} \widetilde{f}$ is smooth up to the boundary.
Hence $\mathcal{P}_{\theta}(m)f = \widetilde{f} - c_{m} (K\widetilde{f}) x^{m}$ holds.
Using Proposition~\ref{prop:formal_scat_p_prime2}
and Definition~\ref{def:p_prime},
we have modulo $O(x^{\infty})$,
\begin{align*}
	mc_{m} (K\widetilde{f}) x^{m}
	&= m (\widetilde{f} - \mathcal{P}_{\theta}(m)f) \\
	&= \Delta_{+} [(\widetilde{f} - \mathcal{P}_{\theta}(m)f) \log x + (F' - \bm{F}') \\
	& \mspace{54mu} + (G'-\bm{G}') x^{m} \log x + \bm{H}'x^{m} (\log x)^{2}]  \\
	&= \Delta_{+} [(F'\!-\!\bm{F}') \!+\! (G'\!-\!\bm{G}'\!+\! c_{m} K\widetilde{f})x^{m} \log x
	\!+\! \bm{H}'x^{m} (\log x)^{2}]. 
\end{align*}
Lemma~\ref{lem:laplacian} gives
\begin{gather*}
	F' - \bm{F}' \in x^{m}C^{\infty}(\overline{\Omega}), \\
	- m(G'|_{M} - \bm{G}'|_{M} + c_{m} (K\widetilde{f})|_{M} )  -2 \bm{H}'|_{M}
	= mc_{m}(K \widetilde{f} )|_{M}, \\
\intertext{and}
	-m\bm{H}'|_{M} = 0.
\end{gather*}
Hence we obtain
\begin{equation*}
	\bm{P}'_{\theta}f = P'_{\theta}f - (K \widetilde{f} )|_{M}
\end{equation*}
from~\eqref{eq:coeff_scat_pprime2} and~\eqref{eq:coeff_pprime}.
Assume that the metric $g_{+}$ is K\"ahler and
every CR pluriharmonic function has a pluriharmonic extension
to $\Omega$.
Then
we can choose as $\widetilde{f}$ its pluriharmonic extension.
Since $g_{+}$ is K\"ahler,
$\widetilde{f}$ satisfies $\Delta_{+} \widetilde{f} = 0$.
Therefore $K\widetilde{f} = 0$ and $\bm{P}'_{\theta}f = P'_{\theta}f$.
\end{proof}

\begin{proof}[Proof of Theorem~\ref{thm:coincidence_q_prime}]
Let $\theta$ be a pseudo-Einstein contact form on $M$
and $x$ a Fefferman's defining function normalized by $\theta$.
Since $\Pi$ has compact support in $\Omega$,
so does $\tr_{g_{+}} \Pi = m - \Delta_{+} \log x$.
Hence the function
\begin{equation} \label{eq:q_prime_correction}
	D = c_{m}^{-1} x^{-m} \mathcal{R}(m) (\Delta_{+} \log x - m)
\end{equation}
is smooth up to the boundary and
\begin{equation*}
	\Dot{\mathcal{P}}_{\theta}(m)1 = - \log x + c_{m} D x^{m}.
\end{equation*}
Therefore
\begin{equation*}
	\| d (\Dot{\mathcal{P}}_{\theta}(m)1) \|_{g_{+}}^{2} -2
	= \| d \log x \|^{2}_{g_{+}} -2 -4 m c_{m} D x^{m} + O(x^{m+1}).
\end{equation*}
From~\eqref{eq:coeff_scat_qprime2} and~\eqref{eq:coeff_qprime},
we have
\begin{equation*}
	2mc_{m} \bm{Q}'_{\theta}
	= 2mc_{m}Q'_{\theta} - 4mc_{m}D|_{M}.
\end{equation*}
In particular if $\Pi = 0$,
then $D = 0$ and $\bm{Q}'_{\theta} = Q'_{\theta}$ holds.
\end{proof}

We need to prepare some notation to prove Theorem~\ref{thm:renormalized_volume}.
Take a $(1,0)$-vector field $\xi$ on $\mathcal{N}$ 
and a smooth function $\kappa$ such that
\begin{equation*}
	\xi x = -1, \quad \iota_{\xi} \partial \overline{\partial} x = \kappa \overline{\partial} x
	\quad \text{near} \ M,
\end{equation*}
and write $\xi = N - (\sqrt{-1}/2) T$.
For the $1$-form $\vartheta = (\sqrt{-1}/2)(\partial - \overline{\partial})x$,
$\vartheta(N) = 0$ and $\vartheta(T) = 1$ hold.
Let $\widetilde{T}^{1,0}$ be the complex subbundle of $T^{1,0}X$ defined by
\begin{equation*}
	\widetilde{T}^{1,0} = \{ W \in T^{1,0}X \mid Wx = 0 \},
\end{equation*}
and $(W_{1}, \dots , W_{n})$ a local frame of $\widetilde{T}^{1,0}$.
Then $(\xi, W_{1}, \dots , W_{n})$ is a local frame of $T^{1,0}X$
and the dual $(1,0)$-coframe is of the form
$(-\partial x, \vartheta^{1}, \dots , \vartheta^{n})$.
Using this coframe,
we can write
\begin{equation*}
	d\vartheta = - \sqrt{-1} \partial \overline{\partial} x 
	= \sqrt{-1} \sum_{\alpha, \beta = 1}^{n} h_{\alpha \overline{\beta}} 
	\vartheta^{\alpha} \wedge \vartheta^{\overline{\beta}}
	- \kappa dx \wedge \vartheta,
\end{equation*}
where $(h_{\alpha \overline{\beta}})$ is a positive definite Hermitian matrix
from strict pseudoconvexity.

Since the real vector field $-N$ is transversal to the boundary and inward pointing,
we obtain a smooth map $\Psi \colon [0, \delta) \times M \to \overline{\Omega}$
such that $\Psi$ is an orientation-reversing diffeomorphism onto the image,
and $\Psi_{*} (\partial / \partial r) = -N$.
Here $r$ is a coordinate of $[0,\delta)$.
We extend $\theta$ constantly in the $r$-direction and
write this $1$-form on $[0,\delta) \times M$ as $\theta$ also.
There exists a unique smooth function $s$ on $[0,\delta) \times M$ such that
\begin{equation*}
	\Psi^{*} (\vartheta \wedge (d \vartheta)^{n} ) 
	\equiv s \theta \wedge (d\theta)^{n} \quad \mod \  dr.
\end{equation*}
Contracting with the vector field $\partial / \partial r$,
we have that the term containing $dr$ is equal to zero,
that is,
\begin{equation*}
	\Psi^{*} (\vartheta \wedge (d \vartheta)^{n} ) = s \theta \wedge (d\theta)^{n}.
\end{equation*}

Let $\widetilde{\xi} = \widetilde{N} - (\sqrt{-1}/2)\widetilde{T}$ be a $(1,0)$-vector field such that
\begin{equation*}
	\widetilde{\xi} x = -1, \quad \widetilde{\xi} \perp_{g_{+}} \ker \partial x.
\end{equation*}
Then $\widetilde{\xi} = \xi - x \sum_{\alpha} \eta^{\alpha}W_{\alpha}$
for some smooth functions $\eta^{\alpha}$,
and the dual frame of $(\widetilde{\xi}, W_{\alpha})$
is given by
$(\widetilde{\vartheta}^{0} = - \partial x,
\widetilde{\vartheta}^{\alpha} = \vartheta^{\alpha} + x \eta^{\alpha} \partial x)$.
The matrix representation of the metric $g_{+}$ for this frame is
\begin{equation*}
	\begin{pmatrix}
	\frac{1+ax}{x^{2}} & 0 \\
	0 & \frac{\widetilde{h}_{\alpha \overline{\beta}}}{x}
	\end{pmatrix},
\end{equation*}
where $a$ is a smooth function 
and $(\widetilde{h}_{\alpha \overline{\beta}})$ is a positive definite Hermitian matrix
satisfying $\widetilde{h}_{\alpha \overline{\beta}}|_{M} = h_{\alpha \overline{\beta}}|_{M}$.
The volume form $\dvol_{g_{+}}$ for $g_{+}$ is, near $M$,
written as
\begin{align*}
	\dvol_{g_{+}}
	&= - \frac{(\sqrt{-1})^{n}}{n!} \frac{1+ax}{x^{n+2}} dx \wedge \vartheta 
	\wedge \left(\sum_{\alpha, \beta = 1}^{n}
	\widetilde{h}_{\alpha \overline{\beta}} \widetilde{\vartheta}^{\alpha} \wedge
	\widetilde{\vartheta}^{\overline{\beta}} \right)^{n} \\
	&= - \frac{1}{n!} \frac{1+ax}{x^{n+2}} 
	\frac{\det \widetilde{h}_{\alpha \overline{\beta}}}{\det h_{\alpha \overline{\beta}}}
	dx \wedge \vartheta \wedge (d \vartheta)^{n}.
\end{align*}
Define $\psi^{(k)} \in C^{\infty} (M)$ by
\begin{equation*}
	\Psi^{*}\left( \frac{\det \widetilde{h}_{\alpha \overline{\beta}}}
	{\det h_{\alpha \overline{\beta}}} \right) s
	= \sum_{k=0}^{\infty} \psi^{(k)} t^{k}, \quad \psi^{(0)} = 1.
\end{equation*}

\begin{proof}[Proof of Theorem~\ref{thm:renormalized_volume}]

Since $\tr_{g_{+}} \Pi = m - \Delta_{+} \log x = O(x^{m})$,
we have
\begin{equation*}
	- \Dot{\mathcal{P}}_{\theta}(m) 1
	= \log x + Ax^{m} + Bx^{m} \log x,
\end{equation*}
where $A,B \in C^{\infty}(\overline{\Omega})$
with $B|_{M} = -2c_{m} Q_{\theta}$
from the proof of Proposition~\ref{prop:formal_scat_p_prime}.
Since the outward unit normal vector field of $\partial \Omega_{\varepsilon}$ is
written as $\sqrt{2}(1+a\varepsilon)^{-1/2} \varepsilon \widetilde{N}$,
\begin{align*}
	\int_{\Omega_{\varepsilon}} \dvol_{g_{+}}
	&= \frac{1}{m} \int_{\Omega_{\varepsilon}} \Delta_{+}
	(-\Dot{\mathcal{P}}_{\theta}(m)1) \dvol_{g_{+}} \\
	&= - \frac{1}{m} \int_{\partial \Omega_{\varepsilon}}
	\widetilde{N}(-\Dot{\mathcal{P}}_{\theta}(m)1) \frac{(\sqrt{-1})^{n}}{n!}
	\frac{1}{\varepsilon^{n}} \vartheta 
	\wedge (\widetilde{h}_{\alpha \overline{\beta}} \widetilde{\vartheta}^{\alpha} \wedge
	\widetilde{\vartheta}^{\overline{\beta}} )^{n} \\
	&= - \frac{1}{m} \int_{M}
	\Psi^{*}(\widetilde{N}(-\Dot{\mathcal{P}}_{\theta}(m)1)) \frac{1}{n!} \frac{1}{\varepsilon^{n}}
	\Psi^{*} \left( \frac{\det \widetilde{h}_{\alpha \overline{\beta}}}
	{\det h_{\alpha \overline{\beta}}} \right)
	s \, \theta \wedge (d\theta)^{n} \\
	&= \frac{1}{m!} \int_{M}
	\left( \frac{1}{\varepsilon^{m}} + mB|_{M}  \log \varepsilon 
	+m A|_{M}  + B|_{M}  \right) \\
	& \mspace{90mu} \times \Psi^{*}\left( \frac{\det \widetilde{h}_{\alpha \overline{\beta}}}
	{\det h_{\alpha \overline{\beta}}} \right)
	s \, \theta \wedge (d\theta)^{n} + o(1).
\end{align*}
Hence
if we write
\begin{equation*}
	\int_{\Omega_{\varepsilon}} \dvol_{g_{+}}
	= \sum_{j=1}^{m} \frac{b_{j}}{\varepsilon^{m+1-j}} + L \log \varepsilon + V + o(1),
\end{equation*}
then
\begin{align}
	L
	&=  \frac{1}{n!} \int_{M} B|_{M} \, \theta \wedge (d\theta)^{n} 
	= - \frac{2c_{m}}{n!} \int_{M} Q_{\theta} \, \theta \wedge (d\theta)^{n} \label{eq:logpart} \\
\intertext{and}
	m! V
	&= \int_{M} \psi^{(m)} \theta\wedge (d\theta)^{n}
	 + \int_{M} \left[ mA|_{M} - 2c_{m}Q_{\theta} \right] \theta \wedge (d\theta)^{n}. \label{eq:finpart}
\end{align}
On the other hand, we have
\begin{align*}
	&\quad\ -n! \int_{\Omega_{\varepsilon}}
	\| d  (\Dot{\mathcal{P}}_{\theta}(m)1) \|^{2}_{g_{+}} \dvol_{g_{+}}   \\
	& = - \int_{[\varepsilon, \delta) \times M}
	\Psi^{*} \left( \| d  (\Dot{\mathcal{P}}_{\theta}(m)1) \|^{2}_{g_{+}}
	\frac{1+ax}{x^{n+2}} 
	\frac{\det \widetilde{h}_{\alpha \overline{\beta}}}{\det h_{\alpha \overline{\beta}}}
	dx \wedge \vartheta \wedge (d \vartheta)^{n} \right) + O(1)  \\
	& = - 2 \int_{[\varepsilon, \delta) \times M}
	\Biggl( \frac{1}{t^{m+1}} \Psi^{*}
	\left( \frac{\det \widetilde{h}_{\alpha \overline{\beta}}}{\det h_{\alpha \overline{\beta}}} \right) s
	+2mB|_{M} \frac{\log t}{t} \\
	&\mspace{120mu}+ 2(mA|_{M} + B|_{M})\frac{1}{t} \Biggr)
	dt \wedge \theta \wedge (d\theta)^{n} + O(1).
\end{align*}
The coefficient of $\log \varepsilon$ in the formula above is
\begin{equation*}
	2 \int_{M} \psi^{(m)} \theta \wedge (d\theta)^{n}
	+ 4\int_{M} [mA|_{M} - 2c_{m} Q_{\theta}]  \theta \wedge (d\theta)^{n}.
\end{equation*}
Hence from~\eqref{eq:logpart} and~\eqref{eq:finpart},
the quantity $m!V$ is equal to the sum of
\begin{equation*}
	-\int_{M} [ mA|_{M} - 4c_{m} Q_{\theta}] \theta \wedge (d\theta)^{n}
\end{equation*}
and the coefficient of $\log \varepsilon$ in
\begin{equation} \label{eq:volume}
	- \frac{n!}{2} \int_{\Omega_{\varepsilon}}
	[\| d  \Dot{\mathcal{P}}_{\theta}(m)1 \|^{2}_{g_{+}} -2] \dvol_{g_{+}}.
\end{equation}
From Proposition~\ref{prop:formal_scat_q_prime2},
the coefficient of $\log \varepsilon$ in~\eqref{eq:volume} is equal to
\begin{align*}
	&\quad -\frac{1}{2} m \int_{M} \bm{B}'|_{M} \, \theta \wedge (d\theta)^{n}
	- \int_{M} \bm{C}'|_{M} \, \theta \wedge (d\theta)^{n}  \\
	& = mc_{m} \int_{M} \bm{Q}'_{\theta}  \, \theta \wedge (d\theta)^{n}
	-4c_{m} \int_{M} Q_{\theta}  \, \theta \wedge (d\theta)^{n}.
\end{align*}
Therefore we have~\eqref{eq:renormalized_volume}.
\end{proof}

\begin{proof}[Proof of Theorem~\ref{thm:renormalized_volume2}]
Assume that 
$g_{+}$ is K\"ahler and
$\theta$ is pseudo-Einstein.
Let $x$ be a Fefferman's defining function normalized by $\theta$
and $D \in C^{\infty}(\overline{\Omega})$ as in~\eqref{eq:q_prime_correction}.
Then $\bm{Q}'_{\theta} = Q'_{\theta} -2D|_{M}$ holds.
Hence the integral of $\bm{Q}'_{\theta}$ is written as
\begin{equation*}
	\int_{M} \bm{Q}'_{\theta} \, \theta \wedge (d\theta)^{n}
	= \int_{M} Q'_{\theta} \, \theta \wedge (d\theta)^{n}
	- 2 \int_{M} D|_{M} \, \theta \wedge (d\theta)^{n}.
\end{equation*}
On the other hand,
we have
\begin{equation*}
	(\tr_{g_{+}} \Pi) \cdot \omega_{+}^{m}
	= m \Pi \wedge \omega_{+}^{n}
	= m \Pi^{m} + d \tau
\end{equation*}
for a compactly supported $(2m-1)$-form $\tau$
and
\begin{equation*}
	\int_{\Omega} (\tr_{g_{+}} \Pi) \cdot \omega_{+}^{m}
	= m \int_{\Omega} \Pi^{m}.
\end{equation*}
The left hand side is equal to
\begin{equation*}
	- c_{m} \int_{\Omega} \Delta_{+} (Dx^{m}) \omega_{+}^{m}
	= - m^{2} c_{m} \int_{M} D|_{M} \, \theta \wedge (d\theta)^{n}.
\end{equation*}
Thus we obtain
\begin{equation*}
	\int_{M} \bm{Q}'_{\theta} \, \theta \wedge (d\theta)^{n}
	= \int_{M} Q'_{\theta} \, \theta \wedge (d\theta)^{n}
	+ \frac{2}{mc_{m}} \int_{\Omega} \Pi^{m}.
\end{equation*}
Since $- \Dot{\mathcal{P}}_{\theta}(m)1$ is equal to $\log x -c_{m} Dx^{m}$,
we have $A = -c_{m} D$ and $B = 0$.
Hence
\begin{equation*}
	\int_{M} A|_{M} \, \theta \wedge (d\theta)^{n}
	= \frac{1}{mc_{m}} \int_{\Omega} \Pi^{m}.
\end{equation*}
The last statement follows from~\eqref{eq:renormalized_volume},~\eqref{eq:integral_scat_qprime},
and~\eqref{eq:integral_correction}.
\end{proof}

\section*{Acknowledgements}
The author is grateful to his supervisor Professor Kengo Hirachi
for introducing him to this subject and for helpful comments.
This work was supported by the Program for Leading Graduate Schools, MEXT, Japan.

\end{document}